\patchcmd{\section}{\scshape}{\bfseries}{}{}
\renewcommand{\@secnumfont}{\bfseries}
\newcommand\testname{Abstract}
\newenvironment{abs}{%
    \small
    \begin{center}%
        {\textbf \testname\vspace{-.2em}\vspace{\z@}}%
    \end{center}%
    \quote
    }
   {\endquote}
\DeclareMathOperator*{\Id}{Id}
\newcommand{\tr}{{}^\mathrm{t} }
\newcommand{\Div}{\mathrm{div}}
\newcommand{\trc}{\mathrm{tr}}
\newcommand{\Pp}{\mathcal{P}}
\newcommand{\ee}{\varepsilon}
\newcommand{\Aa}{\mathcal{A}}
\newcommand{\Cc}{\mathcal{C}}
\newcommand{\Bb}{\mathcal{B}}
\newcommand{\dd}{\mathrm{d}}
\newcommand{\Ss}{\mathscr{S}}
\newcommand{\SSS}{\mathbb{S}}
\newcommand{\J}{\mathcal{J}}
\newcommand{\I}{\mathcal{I}}
\newcommand{\MM}{\mathcal{M}}
\newcommand{\Ff}{\mathscr{F}}
\newcommand{\FF}{\mathcal{F}}
\newcommand{\DD}{\mathfrak{D}}
\newcommand{\RR}{\mathbb{R}}
\newcommand{\ZZ}{\mathbb{Z}}
\newcommand{\NN}{\mathbb{N}}
\newcommand{\BB}{\dot{B}}
\newcommand{\Hh}{\dot{H}}
\newcommand{\Dd}{\dot{\Delta}}
\newcommand{\Sd}{\dot{S}}
\newcommand{\Rd}{\dot{R}}
\newcommand{\Th}{\dot{T}}
\newtheorem{theorem}{Theorem}[section]
\newtheorem{prop}[theorem]{Proposition}
\newtheorem{lemma}[theorem]{Lemma}
\newtheorem{definition}[theorem]{Definition}
\newtheorem{remark}[theorem]{Remark}
\title{\Large 
	\textbf{\uppercase{A Global 2D Well-Posedness Result on the\\ Order Tensor \\Liquid Crystal Theory}}}
\author{Francesco De Anna$\quad$}
\affil{	\textsc{Universit\'e de Bordeaux} \\ 
		\small\textsc{Institut de Math\'ematiques de Bordeaux}\\ 
		\small{F-33405 Talence Cedex, France}
		\\ \vspace{0cm}\\
		\small\textnormal{Francesco.Deanna@math.u-bordeaux1.fr}}
\date{May 22, 2015}
\begin{document}
\maketitle

\begin{abs}
	 In \cite{MR2864407} Paicu and Zarnescu have studied an order tensor system which describes the flow of a liquid crystal. They have proven the existence of weak solutions, the propagation of 
	 higher regularities, namely $H^s$ with $s>1$ and the weak-strong uniqueness in dimension two. This paper is devoted to the propagation of lower regularities, 
	 namely $H^s$ for $0<s\leq 1$ and to prove the uniqueness of the weak solutions. For the completeness of this research, we also propose an alternative approach in order to prove the existence 
	 of weak solutions.
\end{abs}

\vspace{0.2cm}
\textsl{Key words}: {\scriptsize Q-Tensor, Navier-Stokes Equations, Uniqueness, Regularity, Besov Spaces, Homogeneous Paraproduct}

\vspace{0.1cm}
\noindent
\textsl{AMS Subject Classification}: {\scriptsize 76A15, 35Q30, 35Q35,  76D03 }

\section{Introduction and main results}

\subsection{Liquid Crystal}
The Theory of liquid crystal materials has attracted much attention over the recent decades. Generally, the physical state of a material can be determined by the motion degree of 
freedom about its molecules. Certainly, the widespread physical states of matter are the solid, the liquid and the gas ones. 
If the movement degree of freedom is almost zero, namely the forces which act on the molecules don't allow any kind of movement, forcing the material structure to be confined 
in a specific order, then we are classifying a solid material. If such degree still preserves a strong intermolecular force but it is not able to restrict the molecules 
to lie on a regular organization, then we are considering a fluid state of matter. Finally in the gas phase the forces and the distance between the molecules are weak and large respectively, so that the material is not confined and it is able to extend its volume.

\vspace{0.1cm}
\noindent
However, some materials possess some common liquid features  as well as some solid properties, namely the liquid crystals. As the name suggests, a liquid crystal is a compound of fluid molecules, which has a state of matter between the ordinary liquid one and the crystal solid one. The molecules have not a positional order but they assume an orientation which can be modified 
by the velocity flow. At the same time a variation of the alignment can induce a velocity field as well. In a common liquid (more correctly an isotropic liquid) if we consider the orientation of a single molecule then we should see the random variation of its position. Nevertheless, in a crystal liquid, we see an amount of orientational order. 

\vspace{0.1cm}
\noindent
It is well-documented that liquid crystals have been well-known for more than a century, however they have received a growth in popularity and much study only in recent decades, since they have attracted much attention thanks to their potential applications (see for instance \cite{refId0}).

\vspace{0.1cm}
\noindent
Commonly, in literature the liquid crystals are categorized by three sub-families, namely the nematics, the cholesterics and the smectics. On a nematic liquid crystal, the molecules have 
the same alignment with a preferred direction, however their positions are not correlated. 
On a cholesteric liquid crystal we have a foliation of the material where on each plaque the molecules orient themselves with the same direction (which could vary moving on the foliation). As in the nematic case, a cholesteric liquid crystal doesn't require any kind of relation 
between the positions of the molecules. 
At last, on a smectic liquid crystal we have still a privileged direction for all the molecules, as in the nematic case, however the position of them is bonded by a stratification. In addition to the orientational ordering, the molecules lie in layers.

\subsection{The Order Tensor Theory}
A first mathematical approach to model the generic liquid crystals has been proposed by Ericksen \cite{MR0137403} and Leslie \cite{MR1553506} over the period of 1958 through 1968. Even if they have presented a system which has been extensively studied in literature, for instance in \cite{MR1784963} and \cite{MR3021544}, several mathematical challenges and difficulties reside in such 
model. Hence, in 1994, Baris and Edwards \cite{Beris} proposed an alternative approach based on the concept of order Q-tensor, that one can find also in physical literature, for example \cite{PhysRevE.63.056702} and \cite{PhysRevE.67.051705}. The reader can find an exhaustive introduction to the Q-tensor Theory in a recent paper of Mottram and Newton \cite{09097abcb672450aa659cbf01670548d}, however we present here some hints in order to introduce the Q-tensor system. 

\vspace{0.1cm}
\noindent
Let us assume that our material lies on a domain $\Omega$ of $\RR^3$. A first natural strategy to model the molecules orientation is to introduce a vector field $n$, the so called director field (see for instance \cite{MR1883167}, which returns value on $\SSS^2$, the boundary of the unit sphere on $\RR^3$. Here $n(t,x)$ is a specific vector for any fixed time and for any $x\in \Omega$. An alternative approach is not to consider a precise position on $\SSS^2$ but to establish the probability that $n(t,x)$ belongs to some measurable subset $\Aa\subseteq\SSS^2$. Therefore we introduce a continuously distributed measure $\Pp$ on $\SSS^2$, driven by a density $\rho$
\begin{equation*}
	\Pp(\Aa) = \int_{\Aa} \rho(P)\dd \sigma(P) = \int_{\Aa}\dd \rho(P).
\end{equation*}
We supposed the molecules to be unpolar, so that there is no difference between the extremities of them, so mathematically the probability $\Pp(\Aa)$ is always equal to $\Pp(-\Aa)$, which yields that the first order momentum vanishes:
\begin{equation*}
	\int_{\SSS^2} \rho(P)\dd \sigma = 0.
\end{equation*}
Now considering the second order momentum tensor, given by
\begin{equation*}
	M := \int_{\SSS^2} P\otimes P \dd \rho(P) = \left( \int_{\SSS^2} P_iP_j \dd \rho(P) \right)_{i,j=1,2,3} \in \MM_{3}(\RR),
\end{equation*}
where $\MM_{3}(\RR)$ denotes the $3\times 3$ matrices with real coefficients, we observe that $M$ is a symmetric matrix and it has trace $\trc M=1$.

\noindent
In the presence of an isotropic liquid, the orientation of the molecules is uniform in every direction, hence in this case the probability $\Pp_0$ is given by 
\begin{equation*}
	\Pp_0(\Aa) = \int_{\Aa} \dd \sigma(P),
\end{equation*}
so that the corresponding second order momentum $M_0$ is exactly $\Id/3$. We denote by $Q$ the difference between a general $M$ and $M_0$ obtaining a tensor which is known as the de Gennes order parameter tensor. Roughly speaking, $Q$ interprets the deviation between a general liquid crystal and an isotropic one. From the definition, it is straightforward that $Q$ is a symmetric  tensor and moreover it has null trace. If $Q$ assumes the form $s_+(n\otimes n -\Id/3)$, where $s_+$ is a suitable constant, then the system which models the liquid crystal (and we are going to present) reduces to the general Ericksen-Leslie system (see for instance \cite{Beris}).

\subsection{The Q-Tensor System} The present paper is devoted to the global solvability issue for the following system as an evolutionary model for the liquid crystal hydrodynamics:
\begin{equation}\label{main_system}
	\tag{$P$}
	\begin{cases}
	\;	\partial_t Q +  u\cdot \nabla  Q - \Omega  Q +  Q \Omega = \Gamma H( Q)																&\RR_+	\times	\RR^2,\\
	\;	\partial_t u + 	 u \cdot \nabla u-\nu\Delta u +\nabla \Pi=  L\Div\,\{\;Q \Delta  Q - \Delta  Q  Q - \nabla  Q\odot \nabla Q\;\}		&\RR_+	\times	\RR^2,\\
	\;	\Div\,u = 0																															&\RR_+	\times	\RR^2,\\
	\;	(u,\,Q)_{t=0} = (u_0,\, Q_0)																										&\quad\quad\;\;\RR^2,
	\end{cases}
\end{equation}
Here $Q=Q(t,x)\in \MM_3(\RR)$ denotes the order tensor, $u=u(t,x)\in \RR^3$ represents the velocity field, $\Pi = \Pi(t,x)\in \RR$ stands for the pressure, everything depending on the time variable $t\in \RR$ and on the space variable $x\in \RR^2$. The symbol $\nabla Q\odot \nabla Q$ denotes the $3\times 3$ matrix whose $(i, j)$-th entry is given by $\trc (\partial_i Q\, \partial_jQ)$, for $i,\,j = 1,\,2,\,3$. Moreover $\Gamma$, $\nu$ and $L$ are three positive constants. 

\vspace{0.1cm}
\noindent The left hand side of the order tensor equation is composed by a classical transport time derivative while, defining $\Omega$ as the 
antisymmetric matrix $\Omega := (\nabla u  - \tr \nabla u)1/2$, $Q\Omega- \Omega Q$ is an Oldroyd time derivative and describes how the flow gradient rotates and stretches the order parameter.
On the right-hand-side, $H(Q)$ denotes
\begin{equation*}
	H(Q) :=	
		\underbrace{	
			-a 		Q  
			+b		\Big( 
						Q^2 - \trc (Q^2)\frac{\Id}{3} 
					\Big)  
			-c\,	\trc(Q^2)Q  
					}_{P(Q)}
		+L		\Delta Q,
\end{equation*}
and $P$ is the so called Landau-de Gennes thermotropic forces (more precisely it is a truncated taylor expansion about the original one, see for instance \cite{PhysRevLett592582}). Here $a$, $b$ and $c$ are real constant, and from here on we are going to assume $c$ to be positive.

\vspace{0.1cm}
\noindent In reality, Systems \eqref{main_system} is a simplification of a more general one. More precisely, fixing a real $\xi\in [0,1]$, we consider
\begin{equation}\label{main_system_xi}
	\tag{$P_\xi$}
	\begin{cases}
	\;	\partial_t Q +  u\cdot \nabla Q - S(\nabla u, Q) = \Gamma H( Q)														&\RR_+	\times	\RR^2,\\
	\;	\partial_t u + 	 u \cdot \nabla u-\nu\Delta u +\nabla \Pi=  \Div\,\{\tau + \sigma\}									&\RR_+	\times	\RR^2,\\
	\;	\Div\,u = 0																											&\RR_+	\times	\RR^2,\\
	\;	(u,\,Q)_{t=0} = (u_0,\, Q_0)																						&\quad\quad\;\;\RR^2,
	\end{cases}
\end{equation}
where $S(\nabla u,\,Q)$ stands for
\begin{equation*}
	S(Q, \nabla u)		:=	(\xi\,D + \Omega)		\Big( Q + \frac{\Id}{2} \Big)  +
													\Big( Q + \frac{\Id}{2} \Big) (\xi\,D - \Omega ) -
												2\xi\Big( Q + \frac{\Id}{2} \Big)	\trc( Q \nabla u ),
\end{equation*}
with $D:=	( \nabla u + \tr \nabla u)1/2$. Moreover $\tau$ and $\sigma$ are the symmetric and antisymmetric part of the the additional stress tensor respectively, namely
\begin{align*}
	\tau				&:=	 -\xi	( 
										Q + \frac{\Id}{2} 
									)H(Q) -	
							\xi H(Q)( 
										Q + \frac{\Id}{2} 
									)
							+2\xi	( 
										Q + \frac{\Id}{2} 
									)	
									\trc \{ Q H(Q) \}
							-L		\{	
										\nabla Q\odot \nabla Q + \frac{\Id}{3} | Q |^2	
									\},\\
	\sigma				&:=	Q	H(Q)	-	H(Q)	Q.
\end{align*}
Here $\xi$ is a molecular parameter which describes the rapport between the tumbling and aligning effect that a shear flow exert over the liquid crystal directors. In all this paper we are going to consider the simplest case $\xi=0$, namely system \eqref{main_system}, however we predict that all our results are available for the general case and we will prove them in a forthcoming paper. 

\vspace{0.1cm}
\noindent
Before going on, let us recall what we mean by a weak solution of system \eqref{main_system}.

\begin{definition}\label{def_weak_sol}
	Let $Q_0 $ and $u_0$ be a $3\times 3$ matrix a $3$-vector respectively, whose components belong to $L^2( \RR^2 )$. We say that $(u,\,Q)$ is a weak solution for 
	\eqref{main_system} if $u$ belongs to $L^\infty_{loc}(\RR_+ , L^2_x)\cap L^2_{loc}(\RR_+,\Hh^1)$, $Q$ belongs to $C(\RR_+, H^1)\cap L^2_{loc}(\RR_+,\Hh^2)$ and \eqref{main_system} is 
	fulfilled in the distributional sense. 
\end{definition}

\subsection{Some Developments in the order tensor Theory} Although the Q-tensor theory has received much attention in several disciplines as Physics \cite{refId0}, numerical analysis \cite{MR3319854}, mathematical analysis \cite{09097abcb672450aa659cbf01670548d}, the solvability study of the related system has not received numerous investigations, yet. We recall here 
some recent results.

\vspace{0.1cm}
\noindent in  \cite{MR3238138}, D. Wang, X. Xu and C. Yu have developed the existence and long time dynamics of globally defined weak solution. In their paper, system \eqref{main_system} has been considered in the compressible and inhomogeneous setting, the fluid density $\rho$ not necessarily constant, described by a transport equation, and moreover a pressure dependent on $\rho$.

\vspace{0.1cm}
\noindent In  \cite{MR3081935} J. Fahn and T. Ozawa prove some regularity criteria for a local strong solution of system \eqref{main_system}.

\vspace{0.1cm}
\noindent In \cite{MR2864407}, M. Paicu and A. Zarnescu first show the existence of a Lyapunov functional for system \eqref{main_system}. Then they prove the existence of a weak solution thanks to a Friedrichs scheme. They also show the propagation of higher regularity, namely $H^{s}(\RR^2)\times H^{1+s}(\RR^2)$ for $(u,\,Q)$, with $s>1$. At last they established an uniqueness result on the condition that one of the two considered solutions is a strong-solution, that is they prove the weak-strong uniqueness.

\vspace{0.1cm}
\noindent In \cite{doi10113710079224X} M. Paicu and A. Zarnescu prove the same results as in \cite{MR2864407} for system \eqref{main_system_xi} when $\xi$ is a general value of $[0,\xi_0]$ for some $0<\xi_0<1$.

\vspace{0.1cm}
\noindent In \cite{doi10113713095015X} F. G. Guill\'en-Gonz\`alez and L. A. Rodr\'iquez-Bellido show the existence and uniqueness of a local in time weak solution on a bounded domain. They also give a regularity criterion which yields such solutions to be global in time. Moreover they prove the global existence and uniqueness of a strong solution provided a viscosity large enough. 

\vspace{0.1cm}
\noindent In \cite{MR3274285} F. G. Guill\'en-Gonz\`alez and L. A. Rodr\'iquez-Bellido prove the existence of global in time weak-solutions, an uniqueness criteria and a mximum principle for $Q$. They also established the traceless and symmetry for $Q$, for any weak solution.

\subsection{Main Results}
Article \cite{MR2864407} is probably one of the best-known research interesting the solvability of \eqref{main_system}, globally in time and in the whole space. However the author's results present some gaps, therefore this article is mainly devoted to fill them, and complete their paper. Now, let us go into the details. 

\vspace{0.1cm}
\noindent
First Paicu and Zarnescu have proven an uniqueness result on the condition that at least one of the considered solutions is a strong solution. This is due to the necessity to control  $(u(t), \,\nabla Q(t))$ in $L^\infty(\RR^2)$, which is strictly correlated to control $(u(t),\,\nabla Q(t))$ in $H^s(\RR^2)$ with $s>1$, thanks to the Sobolev Embedding. However, such necessity turns out from the attempt to estimate the difference between two solutions in the same space the solutions belong to. Here, we are able to overcome the drawbacks thanks to a strategy which is inspired by \cite{MR1813331} and \cite{MR2309504}. Indeed, since the difference between two solutions has a null initial datum, then it is possible to estimate such  difference in less regular spaces than the ones related to the existence part. Hence the cited difficulties disappear and we are able to prove the uniqueness of the weak solutions. Then, our first result reads as follows: 
\begin{theorem}\label{Thm_Uniqueness}
	Let us assume that system \eqref{main_system} admits two weak solutions $(u_i,\,Q_i),\,i=1,\,2$, in the sense of of definition \ref{def_weak_sol}. Then such solutions are equal, 
	$(u_1,\,Q_1) \equiv (u_2,\,Q_2)$.
\end{theorem}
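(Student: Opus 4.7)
The key idea is to estimate the difference $\delta u := u_1-u_2$ and $\delta Q := Q_1-Q_2$ not in the function space where the solutions themselves live, but in a space whose regularity index is strictly lower by half a derivative. Since the initial datum of the difference vanishes, one can afford such a downshift, and in exchange the product estimates needed to close the nonlinear terms become accessible in two space dimensions. Concretely, the plan is to prove that
\[
\delta u \in L^\infty_T(\Hh^{-1/2})\cap L^2_T(\Hh^{1/2}), \qquad \delta Q \in L^\infty_T(\Hh^{1/2})\cap L^2_T(\Hh^{3/2}),
\]
with norms controlled by a Gronwall-type inequality whose only forcing comes from the (already known) global-in-time energy norms of $(u_i,Q_i)$.

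First I would form the system for $(\delta u,\delta Q,\delta\Pi)$ by subtracting the two copies of \eqref{main_system}. The convection produces $u_1\cdot\nabla \delta u + \delta u\cdot\nabla u_2$ and $u_1\cdot\nabla \delta Q + \delta u\cdot\nabla Q_2$; the Oldroyd contribution becomes $\Omega_1\delta Q -\delta Q\,\Omega_1 + \delta\Omega\,Q_2 - Q_2\,\delta\Omega$ with $\delta\Omega := (\nabla\delta u -\tr\nabla\delta u)/2$; the thermotropic part $P(Q_1)-P(Q_2)$ is a cubic polynomial in $Q_1,Q_2,\delta Q$ whose coefficients all lie in $L^\infty_T(L^p_x)$ for every finite $p$ by the two-dimensional embedding $H^1\hookrightarrow L^p$; and the coupling stress yields terms such as $L\Div\{Q_1\Delta\delta Q -\Delta\delta Q\,Q_2 + \delta Q\,\Delta Q_1 -\Delta Q_1\,\delta Q\}$ together with the difference of the $\nabla Q\odot\nabla Q$ blocks, which rearrange into bilinears in $\nabla Q_1,\nabla Q_2,\nabla\delta Q$.

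Next I would apply the Littlewood--Paley operator $\Dd_q$ to each equation, run an $L^2_x$ energy estimate on the dyadic blocks, multiply by the appropriate power of $2^q$, and sum to obtain
\[
\frac{\dd}{\dd t}\Big(\|\delta u\|_{\Hh^{-1/2}}^2 + \|\delta Q\|_{\Hh^{1/2}}^2\Big) + \nu\|\delta u\|_{\Hh^{1/2}}^2 + \Gamma L\|\delta Q\|_{\Hh^{3/2}}^2 \leq f(t)\Big(\|\delta u\|_{\Hh^{-1/2}}^2 + \|\delta Q\|_{\Hh^{1/2}}^2\Big),
\]
with $f\in L^1_{loc}(\RR_+)$ depending only on the $L^2_t(\Hh^1)$-norm of the $u_i$ and the $L^2_t(\Hh^2)$-norm of the $Q_i$. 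The main tools are the Bony paraproduct decomposition together with the two-dimensional product laws $\|fg\|_{\Hh^{-1/2}}\lesssim \|f\|_{L^2}\|g\|_{\Hh^{1/2}}$ and $\|fg\|_{\Hh^{s}}\lesssim \|f\|_{\Hh^{s+\eta}}\|g\|_{\Hh^{1-\eta}}$ (for $|s|<1-\eta$ and any small $\eta>0$), which allow one to trade a derivative of $\delta Q$ against a derivative of $Q_i$ whenever a high--high or low--high interaction arises. Combined with the null initial condition and Gronwall's lemma, this forces $(\delta u,\delta Q)\equiv 0$ on $[0,T]$ for every $T>0$.

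The most delicate step is the quasilinear coupling $\Div\{Q_1\Delta\delta Q -\Delta\delta Q\,Q_2\}$ in $\Hh^{-1/2}$: the top-order derivative of $\delta Q$ is multiplied by $Q_i$, and in two dimensions $Q_i\in H^1$ barely fails to sit in $L^\infty_x$. Losing half a derivative with respect to the existence regularity is precisely what restores a usable product estimate, since the extra regularity of $\delta Q$ gained from the parabolic dissipation is absorbed on the left-hand side through a small fraction of $\Gamma L\|\delta Q\|_{\Hh^{3/2}}^2$, while the residual low-frequency part of the paraproduct contributes only an integrable time weight to $f(t)$. The analogous treatment of $\Div\{\nabla Q_1\odot\nabla\delta Q\}$ and of $\delta u\cdot\nabla Q_2$ is comparatively easier, as these terms involve fewer derivatives of $\delta Q$ and $\delta u$ and therefore fall squarely within the same product-law framework.
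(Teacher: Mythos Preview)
Your overall strategy is the same as the paper's: work at the level $\Hh^{-1/2}$ for $\delta u$ and $\nabla\delta Q$, run a Littlewood--Paley energy estimate, and close by Gronwall. Where your sketch falls short is precisely the step you single out as ``most delicate''. You assert that for the paraproduct piece $\Th_{Q_2}\Delta\delta Q$ paired with $\nabla\delta u$ in $\Hh^{-1/2}$, ``the residual low-frequency part of the paraproduct contributes only an integrable time weight to $f(t)$''. This is not true. The natural bound for that piece is
\[
\sum_{q}2^{-q}\big|\langle \Sd_{q-1}Q_2\,\Dd_q\Delta\delta Q,\ \Dd_q\nabla\delta u\rangle_{L^2}\big|
\ \lesssim\ \|Q_2\|_{L^\infty_x}\,\|\Delta\delta Q\|_{\Hh^{-1/2}}\,\|\nabla\delta u\|_{\Hh^{-1/2}},
\]
and both $\|\Delta\delta Q\|_{\Hh^{-1/2}}$ and $\|\nabla\delta u\|_{\Hh^{-1/2}}$ are dissipation terms; there is no factor of the energy $\Phi$ to absorb the coefficient, and $\|Q_2\|_{L^\infty_x}$ is not small. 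Exactly the same obstruction appears, symmetrically, in $\langle\delta\Omega\,Q_2-Q_2\,\delta\Omega,\ \Delta\delta Q\rangle_{\Hh^{-1/2}}$. No two-dimensional product law of the type you quote will rescue either term individually: the indices force a product of two quantities that both sit at the dissipation level.

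What the paper actually does---and what your sketch is missing---is to exploit an algebraic cancellation between these two contributions. One uses the symmetric decomposition $\Dd_q(AB)=\J_q^1+\J_q^2+\J_q^3+\J_q^4$; the pieces $\J_q^1,\J_q^2,\J_q^4$ can each be estimated as you expect (commutator bound, Bernstein, convolution/Young), but the $\J_q^3$ pieces, namely $\langle \Sd_{q-1}Q_2\,\Dd_q\Delta\delta Q-\Dd_q\Delta\delta Q\,\Sd_{q-1}Q_2,\ \Dd_q\nabla\delta u\rangle_{L^2}$ from the stress and $\langle \Sd_{q-1}Q_2\,\Dd_q\delta\Omega-\Dd_q\delta\Omega\,\Sd_{q-1}Q_2,\ \Dd_q\Delta\delta Q\rangle_{L^2}$ from the corotational term, are not estimated at all. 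Instead one observes that their sum vanishes identically for every $q$, by the elementary trace identity (symmetry of $Q$, antisymmetry of $\Omega$) recorded in Lemma~\ref{apx_lemma_omega_Q_u}. This is the same cancellation that makes the $L^2$ energy balance work at the existence level, localized frequency by frequency. Without invoking it, the differential inequality you write down cannot be closed.
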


\vspace{0.1cm}
\noindent
The second (and last) gap concerns the propagation of regularity. Paicu and Zarnescu consider initial data $(u_0,\, Q_0)$ in $H^s(\RR^2)\times H^{1+s}(\RR^2)$, with s greater than $1$. Then, they are able to prove that such high-regularity is preserved by the related solution of \eqref{main_system}. Denoting by 
\begin{equation*}
	f(t) := \|u(t)\|_{\Hh^s}^2 + \|\nabla  Q(t)\|_{\Hh^s}^2 ,\quad\quad g(t) :=  \|\nabla u(t)\|_{\Hh^{s}}^2 + \|\Delta  Q(t)\|_{\Hh^s}^2,
\end{equation*}
the major part of their proof releases on the Osgood Theorem, applied on an inequality of the following type:
\begin{equation*}
	\frac{\dd}{\dd t}f(t) +g(t) \leq C f(t)\ln\{e+f(t)\},\quad t\in\RR_+,
\end{equation*}
for a suitable positive constant $C$. However such estimate requires again to control $\|(u(t), \,\nabla Q(t))\|_{L^\infty}$ by $\|(u(t), \,\nabla Q(t))\|_{H^s}$, and this is true only if $s$ is greater than $1$. We fix such lack, namely we extend the propagation for $0<s$, passing through an alternative approach. Indeed we control the $L^\infty$-norm by a different method (see Lemma \ref{Lemma_pre_Osgood} and \eqref{sec3_est12}). Thus, our second result reads as follows:
\begin{theorem}\label{Thm_Regularity}
	Assume that $(u_0,\,Q_0)$ belongs to $H^s(\RR^2)\times H^{1+s}(\RR^2)$, with $0<s$. Then, the solution $(u,\,Q)$ given by Theorem \ref{Thm_Existence} fulfills 
	\begin{equation*}
		(u,\,\nabla Q)\in L^\infty_{t,loc}\Hh^s(\RR^2)\cap L^2_{t,loc}\Hh^{s+1}(\RR^2) .
	\end{equation*}
\end{theorem}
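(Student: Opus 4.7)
The plan is to derive an a priori estimate on $f(t):=\|u\|_{\Hh^s}^2+\|\nabla Q\|_{\Hh^s}^2$ with dissipation $g(t):=\|\nabla u\|_{\Hh^s}^2+\|\Delta Q\|_{\Hh^s}^2$ by a dyadic Littlewood-Paley energy method, and then to promote this a priori bound to a genuine propagation of regularity by an approximation and uniqueness argument. Concretely, one applies $\Dd_j$ to the two equations of \eqref{main_system}, tests the localized equations against $\Dd_j u$ and $\Dd_j\nabla Q$, and weights by $2^{2js}$ before summing in $j$. The transport terms are handled by Chemin's commutator estimate for $[\Dd_j,u\cdot\nabla]$, while the quadratic terms $Q\Omega-\Omega Q$, $P(Q)$ and $\Div\{Q\Delta Q-\Delta Q\,Q-\nabla Q\odot\nabla Q\}$ are expanded via Bony's paraproduct decomposition and reduced to tame product bounds of the form
\begin{equation*}
\|fg\|_{\Hh^s}\lesssim \|f\|_{L^\infty}\|g\|_{\Hh^s}+\|g\|_{L^\infty}\|f\|_{\Hh^s}.
\end{equation*}
The resulting energy inequality has $\|u\|_{L^\infty}+\|\nabla Q\|_{L^\infty}+\|Q\|_{L^\infty}$ as its troublesome factor on the right-hand side.

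The main obstacle, and the reason the Paicu--Zarnescu argument is restricted to $s>1$, is the $L^\infty$ control of $u$ and $\nabla Q$: for $0<s\leq 1$ the Sobolev embedding $\Hh^s(\RR^2)\hookrightarrow L^\infty$ fails. The replacement, which is precisely the content of the preliminary Lemma \ref{Lemma_pre_Osgood}, is a Brezis--Gallouet/Kozono--Taniuchi type inequality of the schematic form
\begin{equation*}
\|v\|_{L^\infty}\leq C\bigl(1+\|v\|_{Y}\bigr)\sqrt{\log\bigl(e+\|v\|_{\Hh^{s+1}}\bigr)},
\end{equation*}
where $Y$ is an $L^2\cap\Hh^1$-type space on which $u$ and $\nabla Q$ are already controlled in $L^\infty_tL^2\cap L^2_t\Hh^1$ by the basic Lyapunov functional of \cite{MR2864407}. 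Meanwhile $\|Q\|_{L^\infty}$ is bounded from the start via $L^\infty_tH^1\cap L^2_tH^2$ together with a Gagliardo--Nirenberg interpolation, so it does not enter the logarithmic estimate.

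Substituting this logarithmic control into the energy inequality and using Young's inequality to absorb the $\Hh^{s+1}$-contribution appearing under the log into the dissipation $g$ yields the pre-Osgood inequality \eqref{sec3_est12},
\begin{equation*}
\frac{\dd}{\dd t}f(t)+\frac{1}{2}g(t)\leq C\,\phi(t)\,\bigl(e+f(t)\bigr)\log\bigl(e+f(t)\bigr),\qquad \phi\in L^1_{\mathrm{loc}}(\RR_+),
\end{equation*}
to which Osgood's lemma applies and furnishes the sought-after local-in-time bound on $f$ and on $\int_0^t g$. This a priori estimate is finally turned into a genuine regularity propagation by mollifying the initial data into $H^{s'}\times H^{s'+1}$ with $s'>1$, applying the Paicu--Zarnescu smooth theory \cite{MR2864407} to obtain approximate solutions with uniform bounds, passing to the limit, and identifying the limit with the given weak solution through the uniqueness statement of Theorem \ref{Thm_Uniqueness}.

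The expected main difficulty is the calibration underlying Lemma \ref{Lemma_pre_Osgood}: one must arrange the exponents and prefactors of the logarithmic interpolation so that, after Young absorption, the resulting Gr\"onwall-type inequality is exactly of the integrable borderline form $f\log(e+f)$ with an $L^1_t$ factor, rather than $f\log^k(e+f)$ with $k\geq 2$ or a prefactor that is merely $L^p_t$ with $p>1$, either of which would fail the Osgood threshold. This fine balance is the technical novelty with respect to \cite{MR2864407} and is precisely what allows the propagation to extend down to any $s>0$.
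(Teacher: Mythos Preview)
Your overall architecture---localized energy estimate, logarithmic $L^\infty$ interpolation, Osgood closure, approximation plus uniqueness---matches the paper's, and your description of Lemma~\ref{Lemma_pre_Osgood} is essentially correct (the paper phrases it as a free-parameter inequality $\|f\|_{L^\infty}\lesssim \|f\|_{L^2}+\sqrt{N}\|f\|_{\Hh^1}+2^{-Ns}\|f\|_{\Hh^{1+s}}$ and optimizes $N$ \emph{after} insertion into the energy balance, but this is equivalent to what you wrote).

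There is, however, a genuine gap in your treatment of the quadratic stress terms. You assert that $Q\Omega-\Omega Q$ and $\Div\{Q\Delta Q-\Delta Q\,Q-\nabla Q\odot\nabla Q\}$ ``are expanded via Bony's paraproduct decomposition and reduced to tame product bounds'' of the form $\|fg\|_{\Hh^s}\lesssim \|f\|_{L^\infty}\|g\|_{\Hh^s}+\|g\|_{L^\infty}\|f\|_{\Hh^s}$. This fails for the pair
\[
L\langle Q\Delta Q-\Delta Q\,Q,\ \nabla u\rangle_{\Hh^s}
\quad\text{and}\quad
L\langle \Omega Q-Q\Omega,\ \Delta Q\rangle_{\Hh^s}.
\]
In either term the paraproduct piece with $Q$ at low frequency (the $\J_q^3$ piece in the paper's symmetric decomposition~\eqref{simmmetric_decomposition}) contributes, at best,
\[
\|Q\|_{L^\infty}\,\|\Delta Q\|_{\Hh^s}\,\|\nabla u\|_{\Hh^s},
\]
which is the product of \emph{two} dissipation norms with an $O(1)$ coefficient; it cannot be absorbed by the left-hand side and cannot be shaped into $\phi(t)f(t)\log(e+f(t))$ by Young's inequality. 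The tame estimate you quote is even worse here, since its second summand would require $\|\Delta Q\|_{L^\infty}$ or $\|\Omega\|_{L^\infty}$, neither of which is available.

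The paper's resolution is structural, not analytic: one keeps these two inner products together, applies the decomposition $\Dd_q(AB)=\sum_{i=1}^4\J_q^i(A,B)$ to each, estimates the $\J_q^1,\J_q^2,\J_q^4$ contributions individually (these do obey bounds of the form $\|(u,\nabla Q)\|_{L^\infty}\sqrt{f}\sqrt{g}$), and then observes that the two remaining $\J_q^3$ pieces
\[
\langle \Sd_{q-1}Q\,\Dd_q\Delta Q-\Dd_q\Delta Q\,\Sd_{q-1}Q,\ \Dd_q\nabla u\rangle_{L^2}
+\langle \Sd_{q-1}Q\,\Dd_q\Omega-\Dd_q\Omega\,\Sd_{q-1}Q,\ \Dd_q\Delta Q\rangle_{L^2}
\]
cancel \emph{identically} at every dyadic level by the algebraic identity of Lemma~\ref{apx_lemma_omega_Q_u} (the same identity that makes the $L^2$ energy balance work). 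Without recognizing and exploiting this cancellation your energy inequality does not close, regardless of how Lemma~\ref{Lemma_pre_Osgood} is calibrated. You should insert this step explicitly; once it is in place, the rest of your outline goes through.
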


\vspace{0.1cm}
\noindent
Now, we have also chosen to perform an existence result, for the completeness of this project, although it has already been proven by Paicu and Zarnescu. Nevertheless, here we use an alternative approach to prove the theorem. Indeed in \cite{MR2864407}, the authors utilize a Friedrichs scheme, regularizing every equation of \eqref{main_system}, while our method is based on a coupled technique between the Friedrichs scheme and the Schaefer's fixed point theorem, so that we have only to regularize the momentum equation of \eqref{main_system}. This method is inspired by \cite{MR1329830}, where F. Lin use a modified Galerkin method coupled with the Schauder fixed point theorem, in the proof of an existence result. Then we are going to prove the following Theorem:
\begin{theorem}\label{Thm_Existence}
	Assume that $(u_0,\,Q_0)$ belongs to $L^2(\RR^2)\times H^1(\RR^2)$, then system \eqref{main_system} admits a global in time weak solution $(u,\,Q)$, in the sense 
	of definition \ref{def_weak_sol}. 
\end{theorem}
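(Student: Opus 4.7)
The strategy outlined in the introduction combines a Friedrichs frequency cutoff on the momentum equation with Schaefer's fixed-point theorem. Let $J_n$ denote the projector onto frequencies $|\xi|\leq n$. For a given divergence-free $v\in C([0,T]; J_n L^2(\RR^2))$ I would first solve the (unregularized) $Q$-equation
\begin{equation*}
    \partial_t Q + v\cdot\nabla Q - \Omega(v)\, Q + Q\,\Omega(v) = \Gamma H(Q),\qquad Q|_{t=0}=Q_0,
\end{equation*}
using the parabolic dissipation $\Gamma L\Delta Q$, the local Lipschitz character of the polynomial part of $H$, and the fact that $v$ is smooth because of the frequency truncation. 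A classical Galerkin or contraction-mapping argument produces a unique $Q[v]\in C([0,T];H^1)\cap L^2([0,T];H^2)$. Next I insert $Q[v]$ into the $J_n$-projected momentum equation
\begin{equation*}
    \partial_t u - \nu\Delta u + \nabla\Pi = -J_n\bigl((J_n v)\cdot\nabla J_n v\bigr) + L\,J_n\Div\{Q[v]\Delta Q[v] - \Delta Q[v]\,Q[v] - \nabla Q[v]\odot\nabla Q[v]\},
\end{equation*}
which is linear in $u$ and lives in the finite-frequency slab $J_n L^2$, and thus admits a unique solution $u[v]$. This defines a map $\Phi_n: v\mapsto u[v]$.

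To apply Schaefer's theorem on $X=C([0,T];J_n L^2)$ I need $\Phi_n$ continuous and compact, and the set $\{v\in X : v=\lambda\,\Phi_n(v)\text{ for some }\lambda\in[0,1]\}$ bounded. Continuity follows from stability estimates for the two sub-problems, while compactness is immediate since $\Phi_n$ takes values in the finite-frequency slab (spatial derivatives cost $O(n)$) and Aubin-Lions yields time compactness directly from the equation. The a priori bound is provided by the Paicu-Zarnescu Lyapunov inequality: testing the $Q$-equation against $-\Gamma H(Q)$, the momentum equation against $u$, and exploiting the standard cancellations (antisymmetry of $\Omega(u)$, together with the matching between the Oldroyd stretching $\Omega Q - Q\Omega$ and the elastic stress $\Div\{Q\Delta Q - \Delta Q\,Q - \nabla Q\odot\nabla Q\}$) yields
\begin{equation*}
    \tfrac{d}{dt}\Big(\tfrac12\|u\|_{L^2}^2+\tfrac{L}{2}\|\nabla Q\|_{L^2}^2+\FF(Q)\Big)+\nu\|\nabla u\|_{L^2}^2+\Gamma\|H(Q)\|_{L^2}^2\leq 0,
\end{equation*}
where $\FF(Q)$ denotes the Landau-de Gennes free energy, bounded below by the assumption $c>0$. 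The $\lambda$-scaling $v=\lambda\,\Phi_n(v)$ preserves these cancellations because $J_n$ is self-adjoint and idempotent, so the bound is uniform in $\lambda$. Schaefer's theorem then produces a fixed point $u^n$ of $\Phi_n$, and thus a global approximate solution $(u^n,Q^n)$.

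Passing to the limit $n\to\infty$ exploits that the above energy inequality is independent of $n$: $u^n$ is bounded in $L^\infty_t L^2\cap L^2_t\Hh^1$, $Q^n$ in $L^\infty_t H^1\cap L^2_t H^2$, and $H(Q^n)$ in $L^2_{t,x}$. Reading $\partial_t u^n$ and $\partial_t Q^n$ from the equations in suitable dual spaces and invoking Aubin-Lions yields strong convergence of $u^n$ and $\nabla Q^n$ in $L^2_{loc}(\RR_+\times\RR^2)$, enough to pass to the limit in each nonlinearity: the convection $u\cdot\nabla u$ and the transport $u\cdot\nabla Q$ are handled classically, the stretching $\Omega Q - Q\Omega$ by weak-strong pairing ($\nabla u^n\rightharpoonup \nabla u$ in $L^2$ against $Q^n\to Q$ strongly), the quadratic $\nabla Q\odot\nabla Q$ analogously at the $L^2$-level, and the cubic $\trc(Q^2)Q$ via 2D Gagliardo-Nirenberg interpolation.

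The main obstacle I anticipate is the continuity of $v\mapsto Q[v]$ needed for the Schaefer step: the cubic part of $P(Q)$ forces the stability estimate to be closed in $H^1$ through 2D Gagliardo-Nirenberg interpolation balanced against the parabolic gain from $\Gamma L\Delta Q$, and one has to propagate uniform-in-$v$ bounds so that the continuity survives the nonlinear Oldroyd stretching. Once this is in place, the remainder of the argument is standard, though care is required to ensure that the $J_n$-truncated convection $J_n\bigl((J_n v)\cdot\nabla J_n v\bigr)$ still enjoys the $L^2$-orthogonality against $u$ that underlies the uniform energy bound.
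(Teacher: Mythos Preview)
Your overall architecture matches the paper's (unregularized $Q$-equation driven by a smooth velocity, frequency-truncated momentum equation, Schaefer fixed point, then $n\to\infty$), but the Schaefer a priori bound has a real gap. You assert that the Lyapunov cancellations survive the scaling $v=\lambda\,\Phi_n(v)$; they do not. With $v=\lambda u$ (where $u=u[v]$), the Oldroyd term $\Omega(v)Q-Q\Omega(v)$ and the transport $v\cdot\nabla Q$ in the $Q$-equation carry a prefactor $\lambda$ when tested against $-L\Delta Q$, whereas the elastic stress $\Div\{Q\Delta Q-\Delta Q\,Q-\nabla Q\odot\nabla Q\}$ in the momentum equation carries no such factor when tested against $u$. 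The two sides of each cross-cancellation are therefore mismatched by $\lambda$ versus $1$, leaving residuals of the form $(1-\lambda)L\langle Q\Delta Q-\Delta Q\,Q,\nabla u\rangle$ and $(1-\lambda)L\langle\nabla Q\odot\nabla Q,\nabla u\rangle$ that you cannot absorb. Self-adjointness and idempotence of $J_n$ rescue only the convective term $\langle (J_n v)\cdot\nabla J_n v,\,J_n u\rangle$, which does vanish.

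The paper avoids this by \emph{not} invoking the Lyapunov structure at the Schaefer stage. Since the velocity lies in the frequency slab, one has $\|J_n u\|_{L^\infty}+\|\nabla J_n u\|_{L^\infty}\le C_n\|u\|_{L^2}$; testing the $Q$-equation against $Q-\Delta Q$ and the momentum equation against $u$, one closes a Gronwall loop with $n$-dependent but $\lambda$-uniform constants, which is all Schaefer requires. The Lyapunov cancellations are then used only at the genuine fixed point ($\lambda=1$), where they do hold and deliver the $n$-independent bound needed for the limit passage. Your choice to test against $-\Gamma H(Q)$ at that later stage is in fact slightly cleaner than the paper's test against $Q-L\Delta Q$: it absorbs the polynomial $P(Q)$ directly into the dissipation $\Gamma\|H(Q)\|_{L^2}^2$ and so bypasses the separate $L^p$-bound for $Q$ that the paper establishes in Proposition~\ref{prop_friedrichs}.
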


\vspace{0.2cm}
\noindent
The structure of this article is over simplistic: in the next section we recall some classical tools which are useful for our proofs, in section three we deal with Theorem \ref{Thm_Existence}, the existence of weak solutions, in section four and five we establish Theorem \ref{Thm_Uniqueness}, i.e. such solutions are unique, and finally in section six we determine Theorem \eqref{Thm_Regularity}, proving the propagation of regularities. We put forward in the appendix some technical details, for the simplicity of the reader.

\section{Preliminaries and Notations}

\noindent
In this section we illustrate some widely recognized mathematical tools and moreover we report some notations which are going to be extensively utilized in this research. 

\subsection{Sobolev and Besov Spaces}
First, let us introduce the spaces we are going to work with (we refer the reader to  \cite{MR2768550} for an exhaustive study and more details) . We recall the well-known definition of Homogeneous Sobolev space $\Hh^s$ and Non-Homogeneous Sobolev Space $H^s$:
\begin{definition}
	Let $s\in\RR$, the Homogeneous Sobolev Space $\Hh^s$ (also denoted $\Hh^s(\RR^2)$) is the space of tempered distribution $u$ over $\RR^2$, the Fourier transform of 
	which belongs to $L^1_{loc}(\RR^2)$ and it fulfills
	\begin{equation*}
		\| u \|_{\Hh^s} := \int_{\RR^2} |\xi|^{2s} |\hat{u}(\xi)|^2\dd \xi < \infty.
	\end{equation*}
	Moreover $u$ belongs to the Non-Homogeneous Sobolev Space $H^s$ (or $H^s(\RR^2)$) if $\hat{u}\in L^2_{loc}(\RR^2)$ and 
	\begin{equation*}
		\| u \|_{\Hh^s} := \int_{\RR^2} (1+|\xi|)^{2s} |\hat{u}(\xi)|^2\dd \xi < \infty.
	\end{equation*}
\end{definition}
\noindent
$H^s$ is an Hilbert space for any real $s$,  while $\Hh^s$ requires $s<d/2$, otherwise it is Pre-Hilbert. Their inner products are
\begin{equation*}
	\langle	u,\,v \rangle_{H^s} = \int_{\RR^2}  (1+|\xi|)^{2s} \hat{u}(\xi)\overline{\hat{v}(\xi)}\dd \xi
	\quad \text{and} \quad 
	 \langle	u,\,v \rangle_{\Hh^s} = \int_{\RR^2}  |\xi|^{2s} \hat{u}(\xi)\overline{\hat{v}(\xi)}\dd \xi,
\end{equation*}
respectively. Even if such dot-products are the most common ones, from here on we are going to use the ones related to the Besov Spaces (at least for the homogeneous case). Hence, first 
we need to define them. In order to do that, it is fundamental to introduce the Dyadic Partition. Let $\chi=\chi(\xi)$ be a smooth function whose support is inside the the ball $|\xi|\leq 1$. Let us assume that $\chi$ is identically equal to $1$ in $|\xi|\leq 3/4$, then, imposing $\varphi_q (\xi ) := \chi(\xi2^{-q-1}) - \chi\big(\xi2^{-q})$ for any $q\in\ZZ$, we define the Homogeneous Litlewood-Paley Block $\Dd_q$ by
\begin{equation*}
	\Dd_q f := \Ff^{-1}(\varphi_q \hat{f} ) \in \Ss', \quad \text{for any}\; f\in \Ss'.
\end{equation*}
Moreover we denote by $\Sd_j$ the operator $\sum_{q\leq j-1} \Dd_q$, for any $j\in\ZZ$. 
We can now present the definition of Homogeneous Besov Space
\begin{definition}
	For any $s\in \RR$ and $(p,r)\in [1,\infty]^2$, we define $\BB_{p,r}^s$ as the set of tempered 
	distribution $f$ such that
	\begin{equation*}
		\|f\|_{\BB_{p,r}^s}:= \|2^{sq}\|\dot{\Delta}_q f\|_{L^p_x}\|_{l^r(\ZZ)}
	\end{equation*}
	and for all smooth compactly supported function $\theta$ on $\RR^2$ we have
	\begin{equation*}
		\lim_{\lambda\rightarrow +\infty} \theta (\lambda D)f = 0\quad\text{in}\quad L^\infty(\RR^2).
	\end{equation*}
\end{definition}

\noindent
It is straightforward that the space $\BB_{2,2}^s$ and $\Hh^s$ coincides for any real $s$, and their norms are equivalent, so we will use the following abuse of notation from here on:
\begin{equation*}
	\langle u,\,v \rangle_{\Hh^s} := \langle u,\,v \rangle_{\BB^s_{2,2}} = \sum_{q\in \ZZ} 2^{2qs} \langle \Dd_q u, \, \Dd_q v\rangle_{L^2_x},
\end{equation*}
where $\langle \cdot, \cdot \rangle_{L^2}$ is the common inner product of $L^2_x:=L^2(\RR^2)$.

\vspace{0.1cm}
\noindent A profitable feature of the Homogeneous Besov space with negative index $s$ is the following one (see Proposition $2.33$ of \cite{MR2768550})
\begin{prop}\label{prop_besov_s_negative}
	Let $s<0$ and $1\leq p,r\leq \infty$. Then $u$ belongs to $\BB_{p,r}^s$ if and only if
	\begin{equation*}
		\big(2^{qs}\|\Sd_q u \|_{L^p_x} \big)_{q\in\ZZ} \in L^r(\ZZ).
	\end{equation*}
	Moreover there exists two positive constant $c_s$ and $C_S$ such that
	\begin{equation*}
		c_s\| u	\|_{\BB_{p,r}^s} \leq \| \big(2^{qs}\|\Sd_q u \|_{L^p_x} \big)_{q\in\ZZ} \|_{l^r(\ZZ)} \leq C_s \| u	\|_{\BB_{p,r}^s}.
	\end{equation*}	 
\end{prop}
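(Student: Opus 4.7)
The plan is to prove both implications via the two elementary Littlewood-Paley identities
\[
\Sd_q u=\sum_{q'\leq q-1}\Dd_{q'}u\qquad\text{and}\qquad \Dd_q u=\Sd_{q+1}u-\Sd_q u,
\]
combined with the crucial observation that for $s<0$ the geometric sequence $(2^{ks})_{k\geq 1}$ lies in $\ell^1(\ZZ)$. Summability in $\ell^r$ is then transferred from one side of the equivalence to the other by Young's convolution inequality on $\ell^r(\ZZ)$.

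For the "only if" direction I assume $u\in\BB_{p,r}^s$. The low-frequency vanishing clause $\theta(\lambda D)u\to 0$ in the definition of $\BB_{p,r}^s$ guarantees that the partial sums $\sum_{q'\leq q-1}\Dd_{q'}u$ really converge in $\Ss'$ and coincide with $\Sd_q u$, which is the single point in the argument where this clause is essential. Taking $L^p$ norms and multiplying and dividing by $2^{q's}$ gives
\[
2^{qs}\|\Sd_q u\|_{L^p_x}\leq \sum_{q'\leq q-1}2^{(q-q')s}\cdot 2^{q's}\|\Dd_{q'}u\|_{L^p_x}.
\]
The right-hand side is the convolution $(c\ast a)(q)$ where $c_k:=2^{ks}\mathbf 1_{k\geq 1}\in\ell^1(\ZZ)$ (since $s<0$) and $a_{q'}:=2^{q's}\|\Dd_{q'}u\|_{L^p_x}\in\ell^r(\ZZ)$ by hypothesis. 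Young's inequality then yields the upper bound with constant $C_s=\|c\|_{\ell^1}=2^s/(1-2^s)$.

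For the converse I use the second identity to estimate
\[
2^{qs}\|\Dd_q u\|_{L^p_x}\leq 2^{-s}\cdot 2^{(q+1)s}\|\Sd_{q+1}u\|_{L^p_x}+2^{qs}\|\Sd_q u\|_{L^p_x},
\]
from which the $\ell^r$ summability of $(2^{qs}\|\Dd_q u\|_{L^p_x})_q$, hence the Besov semi-norm estimate with constant $c_s^{-1}=1+2^{-s}$, follows at once. It remains to verify the low-frequency vanishing condition: applying Bernstein's inequality $\|\Sd_q u\|_{L^\infty}\lesssim 2^{2q/p}\|\Sd_q u\|_{L^p_x}$ and the hypothesis shows that $\|\Sd_q u\|_{L^\infty}\to 0$ as $q\to-\infty$, and since $\theta(\lambda D)u=\theta(\lambda D)\Sd_{q(\lambda)}u$ for $q(\lambda)\sim-\log_2\lambda$ by the frequency support of $\theta(\lambda\,\cdot\,)$, the desired convergence $\theta(\lambda D)u\to 0$ in $L^\infty$ follows.

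The main technical subtlety lies in the "only if" direction: one must justify that the low-frequency cutoff $\Sd_q u$ coincides with the telescoping sum $\sum_{q'\leq q-1}\Dd_{q'}u$ as tempered distributions. Without the vanishing clause built into the definition of $\BB_{p,r}^s$, the distribution $u$ would only be determined modulo a polynomial and the statement of the proposition would fail (see the standard discussion in \cite{MR2768550}); once that clause is invoked, the remaining estimates reduce to routine applications of Young's inequality and Bernstein's inequality as outlined above.
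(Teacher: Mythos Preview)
Your proof is correct and is essentially the standard argument. Note, however, that the paper does not give its own proof of this proposition: it is stated with an explicit reference to Proposition~2.33 of \cite{MR2768550} and used as a black box. Your argument---Young's convolution inequality on $\ell^r(\ZZ)$ with the $\ell^1$ kernel $(2^{ks}\mathbf 1_{k\geq 1})_k$ for the upper bound, the telescoping identity $\Dd_q=\Sd_{q+1}-\Sd_q$ for the lower bound, and Bernstein for the low-frequency clause---is exactly the proof given in that reference, so there is nothing to compare.
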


\subsection{Homogeneous Paradifferential Calculus}
\noindent
In this subsection we give some hints about how the product acts between $\Hh^s$ and $\Hh^t$, for some appropriate real $s$ and $t$. We present several tools which will play a major part in all our proofs. First, let us begin with the following Theorem, whose proof is put forward in the appendix:
\begin{theorem}\label{product_Hs_Ht}
	Let $s$ and $t$ be two real numbers such that $ |s|$ and $|t|$ belong to $[0,1)$. Let us assume that $s+t$ is positive, then for every 
	$a\in \Hh^s$ and for every $b\in \Hh^t$, the product $ab$ belongs to $\Hh^{s+t-1}$ and there exists a positive 
	constant (not dependent on $a$ and $b$) such that
	\begin{equation*}
		\|a b\|_{\Hh^{s+t-1}}\leq C\|a\|_{\Hh^{s}}\|b\|_{\Hh^t}
	\end{equation*}
\end{theorem}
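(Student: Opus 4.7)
My approach would be Bony's homogeneous paraproduct decomposition
\[
ab = T_a b + T_b a + R(a,b), \quad T_a b := \sum_{q\in\ZZ} \Sd_{q-1} a\, \Dd_q b, \quad R(a,b) := \sum_{q\in\ZZ} \Dd_q a\, \widetilde{\Dd}_q b,
\]
with $\widetilde{\Dd}_q := \sum_{|q'-q|\leq 1}\Dd_{q'}$, and then estimate each of the three pieces separately in $\Hh^{s+t-1}=\BB^{s+t-1}_{2,2}$ by means of the dyadic characterization of the norm. A key preliminary estimate, which I would derive from the two-dimensional Bernstein inequality $\|\Dd_{q'} a\|_{L^\infty}\lesssim 2^{q'}\|\Dd_{q'}a\|_{L^2}$ and a Cauchy-Schwarz argument on the resulting geometric sum (convergent because $1-s>0$), is
\[
\|\Sd_{q-1} a\|_{L^\infty}\ \lesssim\ 2^{q(1-s)}\|a\|_{\Hh^s}\qquad\text{for } s<1.
\]

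For the paraproduct $T_a b$, I would exploit that $\Sd_{q-1} a\, \Dd_q b$ has Fourier support in a dyadic annulus of size $2^q$, so that $\Dd_j T_a b$ picks up only the indices $|j-q|\leq N$ for a universal $N$. Feeding the preliminary estimate into this gives
\[
\|\Dd_j T_a b\|_{L^2}\ \lesssim\ \|a\|_{\Hh^s}\sum_{|j-q|\leq N}2^{q(1-s)}\|\Dd_q b\|_{L^2},
\]
and after multiplying by $2^{j(s+t-1)}$, rewriting $2^{j(s+t-1)}2^{q(1-s)} = 2^{(j-q)(s+t-1)}\cdot 2^{qt}$, and taking the $\ell^2_j$-norm, one obtains $\|T_a b\|_{\Hh^{s+t-1}}\lesssim\|a\|_{\Hh^s}\|b\|_{\Hh^t}$. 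The same argument with the roles of $a$ and $b$ swapped, now invoking $t<1$, handles $T_b a$.

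The remainder $R(a,b)$ is the delicate piece, and this is where the assumption $s+t>0$ becomes essential. Because $\Dd_q a\,\widetilde{\Dd}_q b$ has Fourier support in a ball of radius $\sim 2^q$, $\Dd_j R(a,b)$ involves only $q\geq j-N$. Estimating the product first in $L^1$ by H\"older and then upgrading to $L^2$ via the two-dimensional Bernstein loss $\|\Dd_j f\|_{L^2}\lesssim 2^j\|\Dd_j f\|_{L^1}$ valid for spectrally localized functions, one arrives at
\[
2^{j(s+t-1)}\|\Dd_j R(a,b)\|_{L^2}\ \lesssim\ \sum_{q\geq j-N}2^{(j-q)(s+t)}\,c_q\, d_q,
\]
where $c_q := 2^{qs}\|\Dd_q a\|_{L^2}$ and $d_q:=2^{qt}\|\Dd_q b\|_{L^2}$ belong to $\ell^2(\ZZ)$. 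The kernel $2^{k(s+t)}\mathbf 1_{k\leq N}$ lies in $\ell^2(\ZZ)$ exactly because $s+t>0$, and combining Cauchy-Schwarz ($\|cd\|_{\ell^1}\leq\|c\|_{\ell^2}\|d\|_{\ell^2}$) with Young's inequality $\ell^2\ast\ell^1\hookrightarrow\ell^2$ closes the estimate.

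The main obstacle is not any single step, which is standard Littlewood-Paley calculus, but rather the careful bookkeeping of which hypothesis controls which piece: $s<1$ is responsible for $T_a b$, $t<1$ for $T_b a$, and the positivity $s+t>0$ for the remainder $R(a,b)$. The direction of the geometric series in each dyadic sum must be tracked with attention, and one has to check that the partial sums converge in $\Hh^{s+t-1}$ to a bona fide distributional product, but once the three individual estimates are in place, summing them gives the claimed bound $\|ab\|_{\Hh^{s+t-1}}\leq C\|a\|_{\Hh^s}\|b\|_{\Hh^t}$.
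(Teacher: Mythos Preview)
Your proposal is correct and follows essentially the same route as the paper: Bony decomposition $ab=T_ab+T_ba+R(a,b)$, with the paraproducts controlled via the low-frequency $L^\infty$ bound on $\Sd_{q-1}a$ (this is where $s<1$, resp.\ $t<1$, enters) and the remainder controlled in $L^1$ and then upgraded by Bernstein, with $s+t>0$ ensuring summability of the convolution kernel. The only cosmetic difference is that the paper phrases the paraproduct step through the embedding $\BB^{\sigma}_{2,2}\hookrightarrow\BB^{\sigma-1}_{\infty,2}$ together with Proposition~\ref{prop_besov_s_negative}, and handles the remainder via the intermediate space $\BB^{s+t}_{1,1}\hookrightarrow\BB^{s+t-1}_{2,2}$, whereas you unpack these embeddings into explicit Bernstein inequalities; the underlying computation is the same.
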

\noindent
We have already remarked that $\Hh^s$ coincides with $\BB_{2,2}^s$ and this correlation allows us to incorporate in our tools the so-called Bony decomposition:
\begin{equation*}
	f g = \Th_fg + \Th_g h +\Rd(f,\,g),\quad 
	\text{with}\quad 
	\Th_fg := \sum_{q\in\ZZ} \Sd_{q-1}f \Dd_q g \quad\text{and}\quad
	\Rd(f,\,g):= \sum_{q\in\ZZ,\,|l|\leq 1}\Dd_q f \Dd_{q+l}g.
\end{equation*}
However, such decomposition is not going to be useful for every challenging estimation, so that we are going to use the so called symmetric decomposition, in order to overcome the 
drawbacks. Here, we present directly the matrix-formulation of such decomposition, since it will be used on such framework. Let $q$ be an integer, and $A$, $B$ be $N\times N$ matrices, whose components are homogeneous temperate distributions. Denoting by
\begin{equation}\label{simmmetric_decomposition}
\begin{array}{ll}
	 \J_q^1(A,B)	:=\sum_{|q-q'|\leq 5	}	[\Dd_q, \,		\Sd_{q'-1}A]			\Dd_{q'}	B,	
	&\J_q^3(A,B)	:=							\Sd_{q -1}	A							\Dd_{q}		B,\\
	\\
	 \J_q^2(A,B)	:=\sum_{|q-q'|\leq 5	}	(	\Sd_{q'-1}A	-\Sd_{q-1}A)	\Dd_{q}	\Dd_{q'}	B,		
	&\J_q^4(A,B)	:=\sum_{ q'   \geq q- 5 }	\Dd_q(\Dd_{q'}A\,						\Sd_{q'+2}	B),
\end{array}
\end{equation}
the following classical feature concerning the product  $AB$, is satisfied:
\begin{equation}\label{product_decomposition}
\Dd_q(AB)	=		\J_q^1(A,B)
				+	\J_q^2(A,B)
				+	\J_q^3(A,B)
				+	\J_q^4(A,B),
\end{equation}
for any integer $q$.

\subsection{The Frobenius Norm}
Before beginning with the proofs of our main results, let us give the following remark:
\begin{remark}
	The most common inner product defined on $\MM_3(\RR)$ (the $3\times3$ real matrices) is determined by:
	\begin{equation*}
		A\cdot B = \sum_{i,j=1}^3A_{ij}B_{ij} = \trc \{ \tr A B\},\quad\quad\text{for any}\quad A,\,B\in \MM_3(\RR).
	\end{equation*}
	Hence, if at least one of the two matrices is symmetric, for instance $A$, then we obtain
	\begin{equation}\label{Frobenius_inner_product}
		A\cdot B = \trc\{AB\},
	\end{equation}
	which determines the well-known Frobenius norm of a matrix $|A|:= \sqrt{\trc\{A^2\}}$. Since any solution $(u,\,Q)$ for \eqref{main_system} fulfills
	\begin{equation*}
		Q(t,x)\in S_0 := \big\{ A\in \MM_3(\RR),\,\trc\{ A \} = 0\quad\text{and}\quad \tr A = A \big\},
	\end{equation*}
	for almost every $(t,x) \in \RR_+\times \RR^2$ (see \cite{MR2864407} and \cite{MR3274285}), then from here on we will repeatedly use \eqref{Frobenius_inner_product}.
\end{remark}
\noindent Moreover, we will use the symbol $\lesssim$ (instead of $\leq$) which is defined as follows: for any non-negative real numbers $a$ and $b$, $a\lesssim b$ if and only if there exists a positive constant $C$ (not dependent on $a$ and $b$) such that $a \leq C\,b$.

\section{Weak Solutions}
\noindent
This section deals with the existence of weak solutions for \eqref{main_system} in the sense of definition \ref{def_weak_sol}. As we have already explained, we are going to proceed with a coupled method between the Friedrichs scheme and the Schaefer's Theorem. Hence, before going on, let us recall the widely recognized Schaefer's fixed point Theorem
\begin{theorem}
	Let $\Psi$ be a continuous and compact mapping of a Banach Space $X$ into itself, 
	such that the set $\{\,x\in X\,:\:x=\lambda\, \Psi x\;\,\text{for some}\;\, 0
	\leq \lambda\leq 1\}$ is bounded. Then $T$ has a fixed point.
\end{theorem}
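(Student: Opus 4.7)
The plan is to reduce Schaefer's theorem to Schauder's fixed point theorem via a radial retraction argument, with the boundedness hypothesis used precisely to rule out ``spurious'' fixed points created by the retraction. I will assume Schauder's fixed point theorem (every continuous compact self-map of a nonempty closed bounded convex subset of a Banach space has a fixed point) as a black box.

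First, I would invoke the hypothesis to extract a radius. Since the set $S := \{x \in X : x = \lambda\,\Psi x \text{ for some } \lambda\in[0,1]\}$ is bounded, pick $R>0$ with $\|x\|_X \leq R$ for every $x\in S$; note $0\in S$, so $S$ is nonempty. Then I would define the radial retraction $r : X \to \overline{B}(0,2R)$ by $r(x)=x$ for $\|x\|\leq 2R$ and $r(x) = \tfrac{2R}{\|x\|}\,x$ otherwise. This $r$ is a Lipschitz continuous map onto $\overline{B}(0,2R)$, so the composite $\widetilde{\Psi} := r\circ \Psi$ maps the nonempty closed bounded convex set $\overline{B}(0,2R)$ into itself. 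Because $\Psi$ is continuous and compact (i.e.\ sends bounded sets to relatively compact sets) and $r$ is continuous, $\widetilde{\Psi}$ inherits continuity and compactness on $\overline{B}(0,2R)$.

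Next I would apply Schauder's fixed point theorem to $\widetilde{\Psi}$, producing some $x^\star \in \overline{B}(0,2R)$ with $\widetilde{\Psi}(x^\star)=x^\star$. The crux is to promote this to a genuine fixed point of $\Psi$. Suppose for contradiction that $\|\Psi x^\star\| > 2R$. Then by definition of $r$,
\begin{equation*}
x^\star \;=\; r(\Psi x^\star) \;=\; \frac{2R}{\|\Psi x^\star\|}\,\Psi x^\star \;=\; \lambda^\star \,\Psi x^\star,\qquad \lambda^\star := \frac{2R}{\|\Psi x^\star\|}\in (0,1).
\end{equation*}
This places $x^\star$ inside $S$, so $\|x^\star\|\leq R$; but also $\|x^\star\| = \lambda^\star \|\Psi x^\star\| = 2R$, a contradiction. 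Therefore $\|\Psi x^\star\|\leq 2R$, in which case $r(\Psi x^\star)=\Psi x^\star$ and hence $\Psi x^\star = x^\star$.

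The main obstacle is really the verification in the last step: the retraction trick could a priori manufacture a fixed point of $\widetilde{\Psi}$ that is not a fixed point of $\Psi$, and it is exactly the boundedness of $S$ that forbids this by forcing any such ``retracted'' solution to satisfy $\|x^\star\|\le R < 2R$. A minor technical point worth checking carefully is that compactness of $\Psi$ on the whole of $X$ transfers to compactness of $\widetilde{\Psi}$ on $\overline{B}(0,2R)$, which follows at once since $r$ is continuous and maps relatively compact sets to relatively compact sets. Once these two points are settled, the argument is complete.
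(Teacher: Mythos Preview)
Your proof is correct and is the standard reduction of Schaefer's theorem to Schauder's via the radial retraction; the contradiction at the end is exactly the right use of the boundedness hypothesis. Note, however, that the paper does not prove this statement at all: it merely recalls it as a classical tool (``let us recall the widely recognized Schaefer's fixed point Theorem''), so there is no proof in the paper to compare your argument against.
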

\noindent
First, we introduce one of the key ingredient of our proofs, namely the mollifying 
operator $J_n$ defined by
\begin{equation*}
	\Ff( J_n f)(\xi) = 1_{[\frac{1}{n},\,n]}(\xi)  	\quad\quad 
													\text{for } \xi \in \RR^2_\xi,
\end{equation*}
which erases the high and the low frequencies. 

\noindent
We claim the existence and uniqueness of a solution for the following system 
\begin{equation}\label{system_friedrichs}
	\tag{$P_n$}
	\begin{cases}
	\;	\partial_t Q + ( J_nu\cdot \nabla  Q) - 
			J_n\Omega  Q +  Q J_n\Omega = \Gamma H( Q)	&[0,T) \times	\RR^2,\\
	\;	\partial_t u + 	J_n \Pp(\, J_nu \cdot 
									\nabla J_n u\, )
		-\nu\Delta u =  LJ_n\Pp\Div\,\{\;Q \Delta  Q - 
						\Delta  Q  Q 
						- \nabla  Q\odot \nabla Q\;\}
														&[0,T)	\times	\RR^2,\\
	\;	\Div\,u = 0										&[0,T)	\times	\RR^2,\\
	\;	(u,\,Q)_{t=0} = (u_0,\, Q_0)					&\quad\quad\quad\;\;\RR^2,
	\end{cases}
\end{equation}
where $\Pp$ stands for the Leray projector operator, which is determined by
\begin{equation*}
	\Ff\{\,\Pp f\,\}(\xi) := 
	\hat{f}(\xi) - \frac{\xi}{|\xi|}\,\frac{\xi}{|\xi|}\cdot \hat{f}(\xi), 
	\quad\quad
	\text{for}\quad f \in (L^p_x)^2, \quad\text{with}\quad 1<p<\infty,
\end{equation*}
and $T$ is a positive real number. It is well known that $\Pp$ is a bounded operator of $(L^p_x)^2$ into itself 
when $p\in (1,\infty)$.
\begin{remark}\label{remark_weak_sol}
	We say $(u,\,Q)$ is a weak solution of the problem \eqref{system_friedrichs}, 
	provided that
	\begin{equation*}
		u \in C([0,T], L^2_x)\cap L^2(0,T;\, \Hh^1)\,,\quad \quad
		Q \in C([0,T];H^1)\cap L^2(0,T; \Hh^2)
	\end{equation*}
	and \eqref{system_friedrichs} is valid in the distributional sense.
\end{remark}
\noindent
The following proposition plays a major part in our main proof, since it allows us 
to control the $L^p_x$-norm of $Q$ only by $Q_0$.

\begin{prop}\label{prop_friedrichs}
	Suppose that $u\in C([0,T], L^2_x)\cap L^2(0,T; \Hh^1)$ and moreover that  
	$Q\in C([0,T],H^1)\cap L^2(0,T; \Hh^2)$ is a 
	weak solution of
	\begin{equation*}
		\partial_t Q +  u\cdot \nabla  Q - \Omega  Q +  Q \Omega - \Gamma L\Delta Q = 
		\Gamma P( Q)\quad\text{in}\quad[0,T)\times \RR^2,
		\quad\quad\text{and}\quad\quad
		Q_{t=0} = Q_0 \in H^1.
	\end{equation*}
	Then, for every $2\leq q<\infty$, the following estimate is fulfilled
	\begin{equation*}
		\| Q(t) \|_{L^q_x} \leq \|Q_0\|_{H^1}\exp\{Ct\},
	\end{equation*}	
	for a suitable positive constant $C$ dependent only on $q$, $\Gamma$, $a$, $b$ and $c$.
\end{prop}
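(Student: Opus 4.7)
The plan is to test the equation against $|Q|^{q-2}Q$ (using the Frobenius inner product) and integrate over $\RR^2$, exploiting the structural cancellations and the good sign of the cubic term in $P(Q)$.

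First, I would check the cancellations. Because $\Div u = 0$, the transport term contributes
\begin{equation*}
    \int_{\RR^2} (u\cdot\nabla Q) : |Q|^{q-2}Q \, \dd x = \tfrac{1}{q}\int_{\RR^2} u\cdot \nabla |Q|^{q}\, \dd x = 0.
\end{equation*}
For the Oldroyd-type term, since $\Omega$ is antisymmetric and the Frobenius inner product becomes a trace, cyclicity yields
\begin{equation*}
    \trc\bigl(\,(Q\Omega - \Omega Q)\, Q\,\bigr)|Q|^{q-2} = \bigl(\trc(\Omega Q^2) - \trc(\Omega Q^2)\bigr)|Q|^{q-2} = 0.
\end{equation*}
The diffusive term $-\Gamma L \Delta Q$, after integration by parts, produces a nonnegative contribution (proportional to $\int |\nabla Q|^2 |Q|^{q-2}\,\dd x$ plus a further nonnegative term coming from $\nabla |Q|^{(q-2)/2}$), which I simply discard by putting it on the left-hand side.

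The heart of the estimate is the polynomial term $P(Q)$. Using $\trc Q = 0$:
\begin{itemize}
\item the linear piece $-aQ$ gives $-a\int |Q|^q$;
\item the quadratic piece $b(Q^2 - \trc(Q^2)\Id/3)$ gives $b\int \trc(Q^3)|Q|^{q-2}$, bounded in absolute value by $|b|\int |Q|^{q+1}$;
\item the cubic piece $-c\,\trc(Q^2)Q$ gives exactly $-c\int |Q|^{q+2}$, with the good sign since $c>0$.
\end{itemize}
Using Young's inequality $|b||Q|^{q+1} \leq \tfrac{c}{2}|Q|^{q+2} + C_{b,c}|Q|^q$, I absorb the cubic bad term into the coercive one. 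The result is
\begin{equation*}
    \frac{1}{q}\frac{\dd}{\dd t}\|Q(t)\|_{L^q}^q + \frac{c\Gamma}{2}\int_{\RR^2} |Q|^{q+2}\, \dd x \;\leq\; C(a,b,c,\Gamma)\,\|Q(t)\|_{L^q}^q ,
\end{equation*}
i.e. $\tfrac{\dd}{\dd t}\|Q(t)\|_{L^q}^{q} \leq C q\,\|Q(t)\|_{L^q}^{q}$. Gronwall then yields $\|Q(t)\|_{L^q} \leq \|Q_0\|_{L^q}\exp(Ct)$, and the 2D Sobolev embedding $H^1(\RR^2) \hookrightarrow L^q(\RR^2)$ for any finite $q \geq 2$ upgrades this to $\|Q(t)\|_{L^q} \leq C_q\|Q_0\|_{H^1}\exp(Ct)$, absorbing $C_q$ into the exponential constant.

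The main obstacle is the rigorous justification of testing by $|Q|^{q-2}Q$ for a merely weak solution of the stated regularity. The clean fix is to work with the regularized test function $(|Q|^2 + \varepsilon)^{(q-2)/2} Q$: with $Q \in L^\infty_t H^1 \cap L^2_t \dot H^2$ and $\partial_t Q$ recovered from the equation as a distribution in a Sobolev space of negative order, the pairing with this bounded, Lipschitz-in-$Q$ test function is licit (standard duality argument, appealing to the Lions--Magenes-type identity $\langle \partial_t Q, \phi(Q)\rangle = \tfrac{\dd}{\dd t}\int \Phi(Q)$ where $\Phi' = \phi$). All the algebraic identities above are stable under this regularization, and passing $\varepsilon \to 0$ by monotone convergence recovers the inequality for $\|Q\|_{L^q}^q$. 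The rest is just Gronwall and Sobolev embedding.
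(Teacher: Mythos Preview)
Your proof is correct and follows essentially the same strategy as the paper: test against $|Q|^{q-2}Q$, use the divergence-free and Oldroyd cancellations, discard the diffusive term by sign, absorb the $\trc(Q^3)$ contribution into the good $-c|Q|^{q+2}$ term via Young's inequality, and close with Gronwall plus the embedding $H^1(\RR^2)\hookrightarrow L^q(\RR^2)$. The only notable difference is that the paper bounds $\trc(Q^3)$ via an explicit eigenvalue computation exploiting $\trc Q=0$, whereas your direct estimate $|\trc(Q^3)|\le |Q|^3$ is slightly more elementary; you also add a justification of the testing procedure via the regularized multiplier $(|Q|^2+\varepsilon)^{(q-2)/2}Q$, which the paper omits.
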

\begin{proof}
	Fixing $p\in (1,\infty)$, We multiply both left and right-hand side by $2pQ\,\trc\{Q^2\}^{p-1}$, we take the trace and we 
	integrate in $\RR^2$, obtaining that
	\begin{equation*}
		\frac{\dd}{\dd t}\|Q(t)\|_{L^{2p}_x}^{2p} - 
		\Gamma 2Lp \langle Q(t)\trc\{Q(t)^2\}^{p-1},\Delta Q(t)\rangle_{L^2_x} = 
		2\Gamma p \int_{\RR^2}\trc\{Q(t)^2\}^{p-1} \trc\{\, P(Q(t,x))Q(t,x) \,\}\dd x, 
	\end{equation*}
	for almost every $t\in (0,T)$, where we have used $\Div\, u = 0$ and $\trc\{Q\Omega Q-\Omega Q^2\}=0$ . 
	First, analyzing the second term on the left-hand side, integrating by parts, we determine the following identity:
	\begin{align*}
		-\langle 2pQ\trc\{Q^2\}^{p-1},\,&\Delta Q \rangle_{L^2}
		=
		\sum_{i=1}^2\Big[
		2p \int_{\RR^2}\trc\{Q^2\}^{p-1}\trc\{(\partial_i Q)^2\}  + 
		2p \int_{\RR^N}\partial_i[\trc\{Q^2\}^{p-1}] \trc\{Q\partial_iQ \}\Big]\\
		&=2p \int_{\RR^N} \trc\{Q^2\}^{p-1}|\nabla Q|^2  + 
		4p(p-1) \int_{\RR^N}\trc\{Q^2\}^{p-2} |\nabla [\trc\{Q^2\}] |^2\geq 0,
	\end{align*}
	which allows us to obtain
	\begin{equation*}
		\frac{\dd}{\dd t}\|Q(t)\|_{L^{2p}}^{2p}\leq \Gamma \int_{\RR^2}2p\trc\{Q^2\} \trc\{\, P(Q(t,x))Q(t,x) \,\}\dd x.
	\end{equation*}		
	Now, we deal with the right-hand side by a direct computation, observing that
	\begin{align*}
		\int_{\RR^2}\trc\{Q^2\}^{p-1} \trc\{ P(Q)Q\}\dd x 
		&= 
		\Gamma \int_{\RR^2}
		\Big[ \,
			- a\, \trc\{ Q^2 \}^{p} 
			+ b\, \trc\{Q^2\}^{p-1}\trc\{ Q^3 \} 
			- c\, \trc\{ Q^2 \}^{p+1}\,
		\Big]\\
		&\lesssim 
		\|Q\|_{L^{2p}_x}^{2p} -\frac{c}{2} \|Q\|_{L^{2(p+1)}_x}^{2(p+1)}
		\lesssim
		\|Q\|_{L^{2p}_x}^{2p},
	\end{align*}
	where we have used the following feature about a symmetric matrix with null trace:
	\begin{equation*}
		\big|\int_{\RR^2}\trc\{Q^2\}^{p-1}\trc\{Q^3\} \big|\leq  \ee\|Q\|_{L^{2(p+1)}}^{2(p+1)} + \frac{1}{\ee}\|Q\|_{L^{2p}}^{2p},
	\end{equation*}
	for a positive real $\ee$, small enough.
	Indeed, if $Q$ has $\lambda_1$, $\lambda_1$, and $-\lambda_1-\lambda_2$ as eigenvalues, we achieve that 
	$\trc\{Q^3\}=-3\lambda_1\lambda_2(\lambda_1+\lambda_2)$ and $\trc\{Q^2\} = 2(\lambda_1^2+\lambda_2^2+\lambda_1\lambda_2)$, hence
	\begin{equation*}
		|\trc\{Q^3\}|
		\lesssim 
		\ee\lambda_1^2\lambda_2^2 + \frac{1}{\ee}(\lambda_1^2+ \lambda_2^2+2\lambda_1\lambda_2)
		\lesssim
		\ee(\lambda_1^2+\lambda_2^2+\lambda_1\lambda_2)^2 + \frac{1}{\ee}(\lambda_1^2+ \lambda_2^2+\lambda_1\lambda_2)
		\lesssim
		\ee\trc\{Q^2\}^2+\frac{1}{\ee}\trc\{Q^2\}.
	\end{equation*}
	Therefore, we deduce that
	\begin{equation}\label{bound_int_Q^3}
		\big|\int_{\RR^2}\trc\{Q^2\}^{p-1}\trc\{Q^3\} \big| \lesssim 
		\ee
		\int_{\RR^N}\trc\{Q^2\}^{(p+1)}+
		\frac{1}{\ee}
		\int_{\RR^N}\trc\{Q^2\}^{p}.
	\end{equation}
	Summarizing the previous consideration, we get
	\begin{equation*}
		\frac{1}{2}\frac{\dd}{\dd t}\|Q(t)\|_{L^{2p}_x}^{2p} \lesssim 
		\|Q(t)\|_{L^{2p}_x}^{2p},
	\end{equation*}
	so that the statement is proved, thanks to the Gronwall's inequality.
\end{proof}

\noindent
Now, let us focus on one of the main theorems of this section, which reads as follows:
\begin{theorem}
	Let $n$ be a positive integer and assume that $(u_0,\,Q_0)$ belongs to $L^2_x$. 
	Then, system \eqref{system_friedrichs} admits a unique local weak solution.
\end{theorem}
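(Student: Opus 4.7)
The plan is to apply Schaefer's fixed point theorem to a map that decouples the two equations of $(P_n)$. Fix a small time $T>0$ and set $X:=L^4(0,T;L^2_x)$. For $\tilde u\in X$, write $\tilde\Omega:=(\nabla\tilde u-\tr\nabla\tilde u)/2$ and first solve the linear-in-$Q$ Cauchy problem
\begin{equation*}
\partial_t Q -\Gamma L\Delta Q = -J_n\tilde u\cdot\nabla Q + J_n\tilde\Omega\,Q - Q\,J_n\tilde\Omega + \Gamma P(Q),\qquad Q|_{t=0}=Q_0,
\end{equation*}
for $Q\in L^\infty(0,T;H^1)\cap L^2(0,T;H^2)$. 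Since $J_n\tilde u$ has Fourier support in the annulus $\{1/n\leq|\xi|\leq n\}$, Bernstein estimates make it bounded in every $L^p_x$, so the transport and Oldroyd terms are harmless perturbations of the heat equation; the cubic source $P(Q)$ is handled by a local Picard iteration, then extended to $[0,T]$ using the $L^p_x$ bound of Proposition \ref{prop_friedrichs}. Next, insert the resulting $Q=Q[\tilde u]$ into the linear-in-$u$ equation
\begin{equation*}
\partial_t u-\nu\Delta u = -J_n\Pp(J_n\tilde u\cdot\nabla J_n\tilde u)+LJ_n\Pp\,\Div\{Q\Delta Q-\Delta Q\,Q-\nabla Q\odot\nabla Q\},\qquad u|_{t=0}=u_0,
\end{equation*}
solve by Duhamel to obtain $u\in C([0,T];L^2_x)\cap L^2(0,T;\Hh^1)$, and set $\Psi(\tilde u):=u$.

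The three hypotheses of Schaefer's theorem are verified as follows. Continuity of $\Psi$ reduces to Lipschitz estimates on the two linear solution operators, valid for fixed $n$ thanks to the regularization provided by $J_n$. For compactness, note that every source term of the $u$-equation is of the form $J_n(\cdot)$ and hence has Fourier support in $\{|\xi|\leq 2n\}$, so $\Psi(\tilde u)-e^{\nu t\Delta}u_0$ remains in a bounded subset of $L^2(0,T;H^k)$ for every $k\geq 0$ whenever $\tilde u$ ranges over a bounded subset of $X$; together with a uniform bound on its time derivative, the Aubin-Lions lemma yields relative compactness in $X$. The a priori bound for $u=\lambda\Psi(u)$, $\lambda\in[0,1]$, follows from a coupled energy estimate on the pair $(u,Q)$ in the Paicu-Zarnescu style: the crucial cancellation between the Oldroyd terms $-J_n\Omega Q+QJ_n\Omega$ in the $Q$-equation and the stress divergence $LJ_n\Pp\Div\{Q\Delta Q-\Delta Q\,Q\}$ in the $u$-equation still takes place, because $J_n$ is self-adjoint on $L^2_x$ and commutes with $\Pp$, $\Div$ and $\nabla$, while $\Pp u=u$ since $\Div u=0$. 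Combined with Proposition \ref{prop_friedrichs}, this closes a Gronwall-type inequality for $\|u\|_{L^\infty_tL^2_x}^2+\|\nabla Q\|_{L^\infty_tL^2_x}^2$ with $\lambda$-independent constants, and Schaefer produces the desired fixed point. Uniqueness is then obtained by a direct Gronwall argument on the difference of two candidate solutions in $L^2_x\times H^1_x$, which again closes thanks to the smoothing effect of $J_n$.

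The main obstacle is checking that the Paicu-Zarnescu cancellation genuinely survives the $J_n$ regularization at the level of the a priori bound; one must track the self-adjointness of $J_n$ and its commutation with $\Pp$, $\Div$ and $\nabla$ carefully so that no residual high-order term spoils the estimate. A secondary, more technical obstacle lies inside the first step: globalizing the short-time Picard solution of the $Q$-equation despite the cubic growth of $P(Q)$ forces one to combine $H^1$ parabolic estimates with the dimension-free $L^p_x$ control supplied by Proposition \ref{prop_friedrichs} in such a way that all constants remain uniform on $[0,T]$.
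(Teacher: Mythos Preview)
Your Schaefer-based strategy is essentially the same as the paper's, and most ingredients (solving the $Q$-equation first, then the $u$-equation, and exploiting the frequency localization coming from $J_n$ for compactness) are sound. The paper differs in two minor ways: it works in $X=C([0,T];L^2_x)\cap L^2(0,T;\Hh^1)$ rather than $L^4(0,T;L^2_x)$, and in its map the $\tilde u$-equation keeps the convective term \emph{nonlinear} in $\tilde u$ (i.e.\ $J_n\tilde u\cdot\nabla J_n\tilde u$), relying on the fact that after applying $J_n$ this becomes an ODE in a fixed finite-frequency shell. Your linearized variant is equally legitimate.

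There is, however, a genuine gap in your a~priori bound. When $u=\lambda\Psi(u)$ in your setup, the $Q$-equation still carries the Oldroyd term $-J_n\Omega\,Q+Q\,J_n\Omega$ built from $u$ with no $\lambda$, whereas the equation for $u$ becomes
\[
\partial_t u-\nu\Delta u=-\lambda\,J_n\Pp(J_n u\cdot\nabla J_n u)+\lambda\,LJ_n\Pp\,\Div\{Q\Delta Q-\Delta Q\,Q-\nabla Q\odot\nabla Q\},
\]
with $\lambda$ in front of the stress divergence. Testing the $Q$-equation against $-L\Delta Q$ and the $u$-equation against $u$ therefore leaves a residual $(1-\lambda)L\langle Q\Delta Q-\Delta Q\,Q,\nabla J_n u\rangle_{L^2_x}$, and the Paicu--Zarnescu cancellation does \emph{not} survive the $\lambda$-scaling (the same mismatch hits the pair $u\cdot\nabla Q$ versus $\nabla Q\odot\nabla Q$). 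The paper never attempts this cancellation at this stage: since $n$ is fixed throughout the theorem, it simply uses the brutal inequality $\|J_n u\|_{L^\infty_x}+\|\nabla J_n u\|_{L^\infty_x}\leq C_n\|u\|_{L^2_x}$ to estimate every cross term directly and closes a Gronwall inequality with $n$-dependent (but $\lambda$-independent) constants. You should do the same; the energy cancellation is needed only later, in the proof of Theorem~\ref{Thm_Existence}, when one passes to the limit $n\to\infty$.

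A smaller point: your input $\tilde u\in X$ is not assumed divergence-free, so neither is $J_n\tilde u$, and the proof of Proposition~\ref{prop_friedrichs} (which uses $\Div\,u=0$ to kill the transport contribution) does not apply as written. Restrict $X$ to divergence-free fields, or insert a Leray projector on the transport velocity in the $Q$-equation.
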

\begin{proof}
	The key method of the proof relies on the Schauder's Theorem. We define the 
	compact operator $\Psi$ from $C([0,T], L^2_x)^2\cap L^2(0,T;\, \Hh^1)^2$ to itself 
	as follows: $(\Psi(u),\, Q)=: (\tilde{u},\,Q)$ is the unique weak solution (in 
	the sense 	of remark \ref{remark_weak_sol}) of the following Cauchy problem:
	\begin{equation*}
	\begin{cases}
	\;	\partial_t Q + (J_n u\cdot \nabla  Q)
	 	- J_n\Omega  Q +  QJ_n\Omega = \Gamma H( Q)			&[0,T) \times	\RR^2,\\
	\;	\partial_t \tilde{u} + 
		J_n \Pp(\, J_n\tilde{u} \cdot 
						\nabla J_n \tilde{u}\, ) 	
		- \nu\Delta \tilde{u} = LJ_n\Pp\Div\,\{\;Q \Delta Q 
		- \Delta  Q  Q - \nabla  Q\odot \nabla Q\;\}		&[0,T)	\times	\RR^2,\\
	\;	\Div\,\tilde{u} = 0									&[0,T)	\times	\RR^2,\\
	\;	(\tilde{u},\,Q)_{t=0} = (u_0,\, Q_0)				&\quad\quad\quad\;\;\RR^2.
	\end{cases}
	\end{equation*}

	\noindent
	We claim that the hypotheses of the Schauder's Theorem are fulfilled, namely 
	$\Psi$ is a compact mapping of $X:=C([0,T], L^2_x)\cap L^2(0,T;\, \Hh^1)$ into 
	itself, and the set 
	$\{\,u=\lambda\,\Psi (u)\;\,\text{for some}\;\,0\leq \lambda\leq 1\}$.
	is bounded. First we deal with the compactness of $\Psi$. Considering a bounded 
	family $\FF$ of $X$, we claim that the closure of $\Psi(\FF)$ is compact in $X$. 
	If we prove that $\Psi(\FF)$ is an uniformly bounded and equicontinuous 
	family of $C([0,T]; L^2_x)$ and moreover that 
	$\{ \,\Psi(u)(t)\;\text{with }t\in [0,T]\;\text{and }u\in \FF\}$ is a compact 
	set of $L^2_x$, then the result is at least valid as $\Psi$ mapping of $X$ into 
	$C([0,T], L^2)$, thanks to the Arzel\`{a}-Ascoli Theorem. Multiplying the first 
	equation by $Q-\Delta Q$ and integrating in $\RR^2$, we get
	\begin{align*}
		\frac{1}{2}
		&\frac{\dd}{\dd t}
		\Big[ 
			\| \nabla Q \|_{L^2_x}^2 +
			\|		  Q \|_{L^2_x}^2
		\Big] + 
		\Gamma L 
			\big(
			\|\nabla Q\|_{L^2_x}^2+
			\|\Delta Q\|_{L^2_x}^2 
			\big)
		= 
		\int_{\RR^2}
		\Big[			 
			\trc\{(J_n \Omega Q -  Q J_n\Omega)\Delta Q\,\} -\\&-
			\trc\{(J_n u\cdot \nabla Q )	\Delta Q \,\}
		\Big] +
		\Gamma L
		\int_{\RR^2}
		\Big[ \,
			a\, \trc\{ Q	\Delta Q \} -  
			b\, \trc\{ Q^2 	\Delta Q \} +
			c\, \trc\{ Q	\Delta Q \}\trc\{ Q^2 \}	\,
		\Big]+ \\&
		\quad\quad\quad\quad\quad\quad\quad\quad\quad\quad\quad\quad\quad\quad\quad\quad\quad\quad+
		\Gamma \int_{\RR^2}
		\Big[ \,
			a\, \trc\{ Q^2 \} - 
			b\, \trc\{ Q^3 \} +
			c\, \trc\{ Q^2 \}^2	\,
		\Big],
	\end{align*}
	almost everywhere in $(0,T)$, which allows us to achieve
	\begin{align*}
		\frac{\dd}{\dd t}
		&\Big[ 
			\| 	\nabla	Q(t)	\|_{L^2_x}^2 +
			\|		  	Q(t)	\|_{L^2_x}^2
		\Big] + 
		\Gamma L 
			\|	\Delta	Q(t)	\|_{L^2_x}^2 \leq\\&
		\leq
		C_n
		\big(	1					+	\| u(t) 		\|_{L^2_x}^2	\big)
		\big(	\| Q(t) \|_{L^2}^2	+	\| \nabla Q(t) 	\|_{L^2_x}^2	\big)	+
		\frac{\Gamma L}{100}
		\|	\Delta	Q(t)	\|_{L^2_x},	
	\end{align*}
	where $C_n$ is a positive constant dependent on $n$. Therefore, we realize that 
	the family composed by $Q=Q(u)$ as $u$ ranges on $\FF$ is a bounded family 
	in $C([0,T];H^1)\cap L^2(0,T; \Hh^2)$.  Now, multiplying the second equation by 
	$\tilde{u}$ we get the following equality:
	\begin{equation*}
		\frac{1}{2}
		\frac{\dd}{\dd t}
		\|					\tilde{u}(t)	\|_{L^2_x}^2 +
		\nu
		\|	 \nabla			\tilde{u}(t)	\|_{L^2_x}^2
		=
		L\int_{\RR^2}
		\trc\{\,
		\big(
			\nabla Q\odot \nabla Q + 
			Q \Delta Q - \Delta Q\, Q 
		\big) \nabla \tilde{u} \,\}(t,x)\dd x=: F(t),
	\end{equation*}
	for almost every $t\in (0,T)$. Thus it turns out that
	\begin{equation}\label{Thm_Friedrichs_est_tilde_u}
		\frac{\dd}{\dd t}
		\|					\tilde{u}(t)			\|_{L^2_x}^2 +
		\nu
		\|	 \nabla			\tilde{u}(t)			\|_{L^2_x}^2
		\leq
		|F(t)|
		\leq
		\|	(Q(t),\, \nabla Q(t),\, \Delta Q(t)) 	\|_{L^2_x}^2 +
		C_n
		\|					\tilde{u}(t)			\|_{L^2_x}^2,	
	\end{equation}
	where $C_n>0$ depends on $n$. Here, we have used the feature $J_n \tilde{u} = 
	\tilde{u}$, which comes from the uniqueness of the solution for the second equation,
	so that $\| \nabla \tilde{u}\|_{L^\infty_x}\leq C_n \|\tilde{u}\|_{L^2_x}$. Summarizing 
	the previous considerations and thanks to the Gonwall's inequality we discover that 
	$\Psi(\FF)$ is a bounded family in $X$, so in $C([0,T], L^2)$. Moreover, from 
	\eqref{Thm_Friedrichs_est_tilde_u} and the previous result, 
	it turns out that $|F(t)|$ is bounded on $[0,T]$, uniformly in $u\in \FF$. 
	Hence $\Psi(\FF)$ is an equicontinuous family of $C([0,T]; L^2_x)$. Finally, 
	because $J_n \tilde{u} = \tilde{u}$, we get that 
	$\{ \,\Psi(u)(t)\;\text{with }t\in [0,T]\;\text{and }u\in \FF\}$ is a subset of a 
	bounded $L^2_x$-family composed by functions with Fourier-transform supported in the 
	anulus $\Cc(1/n, n)$, which is a compact family of $L^2_x$. Summarizing all the 
	previous consideration, we get that $\Psi(\FF)$ is compact in $C([0,T], L^2_x)$ 
	thanks to the Arzelà-Ascoli Theorem.
	
	\noindent It remains to prove that $\Psi(\FF)$ is compact in $L^2(0,T;\Hh^1)$, so 
	that $\Psi$ is a compact mapping of $X$ into itself. Since $J_n \Psi(u(t)) = 
	\Psi(u(t))$ for every $u\in \FF$ and $t\in (0,T)$, the precompactness of 
	$\Psi(\FF)$ in $L^2(0,T;\Hh^1)$ is equivalent to the precompactness of  $\Psi(\FF)$ 
	in $L^2( (0,T)\times \RR^2\,)$. Recalling that $\Psi(\FF)$ is precompact in 
	$C([0,T], L^2_x)$  which is embedded in $L^2( (0,T)\times \RR^2\,)$ (for $T$ finite), 
	then we determine the result, so that, in conclusion $\Psi$ is a compact operator 
	from $X$ to itself.
	
	\noindent
	Now, we deal with the Schaefer's Theorem hypotheses, namely the set 
	$\{\, u=\lambda \Psi(u) \;\text{for some}\;\lambda \in (0,1)\}$  
	is a bounded family of $X$. First, we point out that if $u=\lambda \psi(u)$, then 
	the couple $(u,\,Q)$ is a solution for
	\begin{equation*}
	\begin{cases}
	\;	\partial_t Q + \lambda J_n u\cdot \nabla  Q
	 	- \lambda J_n\Omega Q + \lambda QJ_n\Omega = \Gamma H( Q)
	 														&[0,T) \times	\RR^2,\\
	\;	\partial_t u + 
		J_n \Pp(\, J_n u \cdot 
						\nabla J_n u\, ) 	
		- \nu\Delta u = LJ_n\Pp\Div\,\{\;Q \Delta Q 
		- \Delta  Q  Q - \nabla  Q\odot \nabla Q\;\}		&[0,T)	\times	\RR^2,\\
	\;	\Div\,u = 0											&[0,T)	\times	\RR^2,\\
	\;	(u,\,Q)_{t=0} = (u_0,\, Q_0)						&\quad\quad\quad\;\;\RR^2.
	\end{cases}
	\end{equation*}
	We multiply the first equation by $Q-\Delta Q$, the second equation by 
	$u$, we integrate everything in $\RR^N$ and we sum the results, obtaining:
	\begin{align*}
		\frac{\dd }{\dd t}
		\Big[\,
			\|			Q	\|_{L^2}^2	+
			\|	\nabla	Q	\|_{L^2}^2	+
			\|			u	\|_{L^2}^2 	
		\Big]							+
		\Gamma L
			\|	\nabla	Q	\|_{L^2}^2	+
		\Gamma L	
			\|	\Delta	Q	\|_{L^2}^2	+
		\nu \|	\nabla 	u	\|_{L^2}^2	=
		 \lambda 
		 \langle J_n u\cdot \nabla Q,			\, Q - \Delta Q	\rangle_{L^2} + \\ +
		 \lambda
		 \langle J_n \Omega Q - Q J_n\Omega,	\, Q - \Delta Q	\rangle_{L^2} +
		 \Gamma
		 \langle P(Q),							\, Q - \Delta Q	\rangle_{L^2} -
		 \langle J_n u\cdot \nabla J_n u, 		\,\nabla J_n u	\rangle_{L^2} + \\ +
		L\langle Q\Delta Q - \Delta Q \,Q,		\,\nabla J_n u	\rangle_{L^2} +
		L\langle \nabla Q\odot\nabla Q,			\,\nabla J_n u	\rangle_{L^2}.
	\end{align*}
	According to $\| J_nu \|_{L^\infty} + \|\nabla J_n \|_{L^\infty} \leq C_n \| u \|_{L^2}$, 
	up to a positive constant $C_n$ dependent on $n$, it is not computationally demanding to 
	achieve	the following estimate:
	\begin{align*}
		\frac{\dd }{\dd t}
		\Big[\,
			\|			Q	\|_{L^2_x}^2	+
			\|	\nabla	Q	\|_{L^2_x}^2	+
			\|			u	\|_{L^2_x}^2 	
		\Big]			&				+	
		\Gamma L	
			\|	\Delta	Q	\|_{L^2_x}^2	+
		\nu \|	\nabla 	u	\|_{L^2_x}^2
		\leq	\\ &\leq
		\tilde{C}_n
		\Big[\,
			\|			Q	\|_{L^2_x}^2	+
			\|	\nabla	Q	\|_{L^2_x}^2	+
			\|			u	\|_{L^2_x}^2 	
		\Big]							+
		\frac{\nu}{100}
		\| \nabla u 	\|_{L^2_x}^2+
		\frac{\Gamma L}{100}
		\|	\Delta Q	\|_{L^2_x}^2.
	\end{align*}
	Therefore, thanks to the Gronwall's inequality, we detect the following estimate:
	\begin{align*}
		\|			Q	\|_{L^\infty(0,T;L^2_x)}^2	 +
		\|	\nabla	Q	\|_{L^\infty(0,T;L^2_x)}^2	&+
		\|			u	\|_{L^\infty(0,T;L^2_x)}^2	 + \\&+
		\|	\Delta	Q	\|_{L^2(0,T;L^2_x)}^2	+
		\|	\nabla 	u	\|_{L^2(0,T;L^2_x)}^2 
		\lesssim
		\| (u_0,\,Q_0,\, \nabla Q_0)\|_{L^2_x}
		e^{C_n T},
	\end{align*}
	so that, the family $\{\, u=\lambda \Psi(u) \;\text{for some}\;0\leq \lambda \leq 1\}$ is 
	bounded in $X$. Hence, applying the Schaefer's fixed point Theorem, we conclude that there exists
	a fixed point for $\Psi$, namely there exists a weak solution $(u,\,Q)$ 
	(in the sense of remark \ref{remark_weak_sol}) for the system \eqref{system_friedrichs}.
\end{proof}
\begin{remark}
	In the previous proof $T$ has only to be bounded, and it has no correlation with the initial data, so that the solution $(u^n,\,Q^n)$ of system 
	\eqref{system_friedrichs}, given by Proposition \ref{prop_friedrichs}, it should be supposed to belong to 
	\begin{equation*}
		C(\RR_+,L^2_x)\cap L^2_{loc}(\RR_+,\Hh^1)\times C(\RR_+,H^1)\cap L^2_{loc}(\RR_+,\Hh^2).
	\end{equation*}
\end{remark}

\noindent
We are now able to prove our main existence result, namely Theorem \ref{Thm_Existence}.

\begin{proof}[Proof of Theorem \ref{Thm_Existence}]
	Let us fix a positive real $T$ and let $(u^n,\,Q^n)$ be the solution of \eqref{system_friedrichs} given by Proposition \ref{prop_friedrichs}, for any 
	positive integer $n$. We analyse such solutions in order to develop some $n$-uniform bound for their norms, which will allow us to apply some classical 
	methods about compactness and weakly convergence.
	
	\noindent
		We multiply the first equation of \eqref{system_friedrichs} by $Q^n-L\Delta Q^n$, the second one by $u^n$, we integrate in $\RR^2$ and finally we sum the results, 
	obtaining the following identity
	\begin{equation}\label{neo}
	\begin{aligned}
		\frac{\dd}{\dd t}\Big[	\|	u^n	\|_{L^2_x} &+ \|	Q^n	\|_{L^2_x} + \Gamma L\|	\nabla Q^n \|^2_{L^2_x}	\Big] + 
		\nu		\|	\nabla u^n	\|_{L^2_x} + 
		\Gamma L	\|	\nabla Q^n	\|_{L^2_x} + 
		\Gamma L^2	\|	\Delta Q^n	\|_{L^2_x} =  \\& =
	\underbracket[0.5pt][1pt]{
		- 			\langle	u_n	\cdot \nabla Q_n,\, 					Q_n	\rangle_{L^2_x} 
	}_{=0}	
	\underbracket[0.5pt][1pt]{	
		+ L			\langle	u_n	\cdot \nabla Q_n,\,				\Delta	Q_n \rangle_{L^2_x}
	}_{\Bb}
	\underbracket[0.5pt][1pt]{
		+			\langle \Omega_n Q_n - Q_n \Omega_n,\,				Q_n	\rangle_{L^2_x}
	}_{=0} -\\&
	\underbracket[0.5pt][1pt]{
		- L			\langle \Omega_n Q_n - Q_n \Omega_n,\,		\Delta	Q_n	\rangle_{L^2_x}
	}_{\Aa}	 	
		+ \Gamma 	\langle	P(Q_n),\,									Q_n	\rangle_{L^2_x} 
		- \Gamma L	\langle	P(Q_n),\,							\Delta	Q_n	\rangle_{L^2_x} -\\&
	\underbracket[0.5pt][1pt]{
		- 			\langle u_n\cdot\nabla u_n,\,						u_n	\rangle_{L^2_x} 
	}_{=0} 
	\underbracket[0.5pt][1pt]{
		- L			\langle Q_n \Delta Q_n - \Delta Q_n Q_n,\,	\nabla	u_n	\rangle_{L^2_x} 
	}_{\Aa \Aa}
	\underbracket[0.5pt][1pt]{
		- L			\langle\Div\{\nabla Q_n\odot\nabla Q_n\},\,			u_n	\rangle_{L^2_x}
	}_{\Bb\Bb}.
	\end{aligned}
	\end{equation}
	First, let us observe that $\Aa + \Aa\Aa = 0$ thanks to Lemma \ref{apx_lemma_omega_Q_u}. Moreover $\langle u_n\cdot \nabla Q_n,\,Q_n\rangle_{L^2_x}$ and 
	$\langle u_n\cdot \nabla u_n, u_n\rangle_{L^2_x}$ are null , because of the divergence-free condition of $u_n$, while 
	$\langle \Omega_n Q_n - Q_n \Omega_n,\,\Delta Q_n\rangle_{L^2_x}$ is zero since $Q_n$ is symmetric. Furthermore $\Bb+\Bb\Bb = 0$ since the following identity is 
	satisfied:
	\begin{equation*}
		\trc\{u_n\cdot\nabla Q_n\,\Delta Q_n\} = \Div \{\nabla Q_n\odot\nabla Q_n\}\cdot u_n - \Div \{ u_n (\,|\partial_1 Q_n|^2 + |\partial_2 Q_n|^2) \}.
	\end{equation*}
	Recalling \eqref{bound_int_Q^3} with $p=1$, it turns out that
	\begin{equation*}
		 \Gamma 	\langle	P(Q_n),\,									Q_n	\rangle_{L^2} \lesssim 
		 \|	Q_n \|_{L^2_x}^2 -\frac{c}{2}\| Q_n \|_{L^4_x}^4\leq \|	Q_n \|_{L^2_x}^2,
	\end{equation*}
	while, by a direct computation and thanks to Proposition \ref{prop_friedrichs}, we deduce
	\begin{align*}
		 \Gamma L	\langle	P(Q_n),\,							\Delta	Q_n	\rangle_{L^2_x} 
		 &\lesssim
		 \| \nabla Q_n \|^2_{L^2_x} + \|Q_n\|_{L^6}^3\|\Delta Q_n\|_{L^2_x}
		 \lesssim
		  \| \nabla Q_n \|^2_{L^2_x} + \|Q_0\|_{H^1}^6e^{6Ct}+ 
		C_{\Gamma,L}\|\Delta Q_n\|_{L^2_x}^2,
	\end{align*}
	where $C$ is positive real constant, not dependent on $n$ and $C_{\Gamma,L}>0$ is a suitable small enough constant which will allow to absorb $\|\Delta Q_n\|_{L^2_x}^2$ by the left-hand 
	side of \eqref{neo}. Thus, summarizing the previous considerations, we get
	\begin{align*}
		\frac{\dd}{\dd t}\Big[	\|	u^n	\|_{L^2_x}^2 + \|	Q^n	\|_{L^2_x}^2 + \Gamma L\|	\nabla Q^n \|^2_{L^2_x}\Big] + 
		\nu		\|	\nabla u^n	\|_{L^2}^2 &+ 
		\Gamma L^2	\|	\Delta Q^n	\|_{L^2_x}^2\lesssim \\&
		\lesssim \|Q^n\|_{L^2_x}^2 + \|\nabla Q^n\|_{L^2_x}^2 + \|Q_0\|_{H^1}^6e^{6Ct},
	\end{align*}
	which yields
	\begin{equation}\label{Thm_fr_estimates}
	\begin{aligned}
		\|	(u^n,\,Q^n,\,\nabla Q^n)	\|_{L^\infty(0,T;L^2_x)} &+ \|(\nabla u^n,\,\Delta Q^n) \|_{L^2(0,T;L^2_x)} \lesssim \\&\lesssim
		(\| u_0 \|_{L^2_x} +\|Q_0\|_{L^2_x} +  \|Q_0\|_{H^1}^6 ) \exp\{\tilde{C}t\},
	\end{aligned}	
	\end{equation}
	for a suitable positive constant $\tilde{C}$, independent on $n$.
	
	\noindent
	Thanks to the previous control, we carry out to pass to the limit as $n$ goes to $+\infty$, and we claim to found a weak solution for system \eqref{main_system}.
	We fix at first a bounded domain $\Omega$ of $\RR^2$, with a smooth enough boundary.
	At first we claim that $(Q^n)_\NN$ is a Cauchy sequence in $C([0,T], L^2(\Omega))$, and the major part of the proof releases in the Arzelà-Ascoli Theorem. We have already 
	proven that $(Q^n)_\NN$ is bounded in such space, moreover, since $Q_n(t)$ belongs to $H^1(\Omega)$ which is compactly embedded in $L^2(\Omega)$, we get that
	$\{ Q_n(t)\,:\, n\in \NN\;\text{and}\; t\in [0,T]\}$ is a compact set of $L^2(\Omega)$. Moreover, observing that
	\begin{align*}
		\| \partial_t 	&Q^n \|_{L^2(\Omega)} 
		\leq 
		\|				u^n	\|_{L^4_x		}
		\|	\nabla		Q^n	\|_{L^4_x		}				+
		\|	\nabla 		u^n	\|_{L^2_x		}
		\|				Q^n	\|_{L^\infty_x}				+
		\|			P(	Q^n)\|_{L^2_x		}\\
		&\leq 
		\|				u^n	\|_{L^2_x		}^{\frac{1}{2}}
		\|	\nabla		u^n	\|_{L^2_x		}^{\frac{1}{2}}
		\|	\nabla	 	Q^n	\|_{L^2_x		}^{\frac{1}{2}}
		\|	\Delta		Q^n	\|_{L^2_x	}^{\frac{1}{2}}	+
		\|	\nabla	 	u^n	\|_{L^2_x		}
		\|				Q^n	\|_{H^2		}				+
		\|				Q^n	\|_{L^2_x		}				+
		\|				Q^n	\|_{L^4_x		}^2				+
		\|				Q^n	\|_{L^6_x		}^3.		
	\end{align*}
	Therefore, it turns out that $(\partial_t Q^n)_\NN$ is an uniformly bounded sequence in $L^1(0,T;L^2_x)$ which yields that $(Q^n)_\NN$ is uniformly 
	equicontinuous in $C([0,T], L^2_x)$, so that, applying the Arzelà-Ascoli Theorem, there exists $Q\in C([0,T], L^2_x)$ such that $Q^n$ 
	strongly converges to $Q$, up to a subsequence. Moreover, thanks to \eqref{Thm_fr_estimates}, we also obtain that $\nabla Q$ and $\Delta Q$ belong to
	$L^\infty(0,T;L^2_x)$ and $L^2(0,T;L^2_x)$ respectively, and we have:
	\begin{equation*}
		\nabla Q_n 	\rightharpoonup \nabla Q	\quad\quad w - L^2(0,T; L^2_x)\quad\quad\text{and}\quad\quad\quad
		\Delta Q^n	\rightharpoonup	\Delta Q	\quad\quad w - L^2(0,T; L^2_x),
	\end{equation*}
	up to a subsequence. Now, let us fix a bounded smooth domain  $\Omega$ of $\RR^2$. Then $\nabla Q^n(t)$ weakly converges to $\nabla Q(t)$ in $H^1(\Omega)$,
	for almost every $t\in (0,T)$, up to a subsequence, so that, from the compact embedding $H^1(\Omega)\hookrightarrow\hookrightarrow L^2(\Omega)$, we deduce 
	that $\nabla Q^n(t)$ strongly converges to $\nabla Q(t)$ in $L^2(\Omega)$, for almost every $t\in (0,T)$. Moreover $\|\nabla Q_n -\nabla Q \|_{L^2(\Omega)}$ belongs to 
	$L^\infty(0,T)$ and its norm is uniformly bounded in $n$. Hence applying the dominated convergence Theorem, we get  
	\begin{equation*}
		\lim_{n\rightarrow \infty}\int_0^T \|\nabla Q_n(t) -\nabla Q(t) \|_{L^2}^2\dd t = 
		\int_0^T\lim_{n\rightarrow \infty} \|\nabla Q_n(t) -\nabla Q(t) \|_{L^2}^2\dd t = 0,
	\end{equation*}
	namely $\nabla Q^n$ strongly converges to $\nabla Q$ in $L^2(0,T;L^2(\Omega))$. Since $\nabla Q^n$ is bounded in $L^2(0,T;L^6_x)$ (from the embedding 
	$H^1\hookrightarrow  L^6_x$ we get also that $\nabla Q^n$ weakly converges to $\nabla Q$ in $w-L^2(0,T;L^6_x)$, so that $\nabla Q^n$ strongly converges to 
	$\nabla Q$ in $L^2(0,T;L^4(\Omega))$, by interpolation. This range of convergences finally show that $\nabla Q\odot \nabla Q$ and $Q\Delta Q - \Delta Q$ are 
	the limits of $\nabla Q^n\odot \nabla Q^n$ and $Q^n\Delta Q^n - \Delta Q^n Q^n$, as $n$ goes to infinity, respectively in $L^1(0,T;L^4(\Omega))$ and 
	$L^1(0,T; L^{4/3}(\Omega))$.  the strongly convergence of $P(Q^n)$ to $P(Q)$ in $L^2(0,T;L^2(\Omega))$ is straightforward, while, with a similar strategy, we are
	able to prove the existence of $u\in L^\infty(0,T; L^2_x)$ with $\nabla u\in L^2(0,T;L^2_x)$ such that $u^n$ strongly converges to $u$ in $L^2(0,T; L^4(\Omega))$ 
	and $\nabla u^n$ weakly converges to $\nabla u$ in $L^2(0,T; L^{2}(\Omega))$ (everything up to a subsequence). Hence $u^n\cdot\nabla u^n$ and 
	$\Omega^n Q^n- Q^n\Omega^n$
	weakly converges in $L^1(0,T;L^{4/3}(\Omega) )$ to $u\cdot\nabla u$ and $\Omega Q - Q\Omega$ respectively. Finally $u^n\cdot \nabla Q^n$ strongly converges to 
	$u\cdot \nabla Q$ in $L^1(0,T; L^2(\Omega) )$.
	
	\noindent Now, $J_n \phi$ strongly converges to $\phi$ in $L^\infty(0,T; L^p_x)$, for any $\phi\in \DD(\,(0,T)\times \Omega\,)$ and for any $1\leq p<\infty$. Considering all the  
	previous convergences and since $(u^n,\,Q^n)$ is a weak solution of \eqref{system_friedrichs}, namely
	\begin{align*}
		-\int_0^T\int_{\RR^N} \trc\{Q^n \partial_t\Psi\} - \int_{\RR^N}&\trc\{ Q_0 \Psi(0,\cdot) \} + \int_0^T \int_{\RR^N}\trc\{ (u^n\cdot\nabla Q^n)\Psi\} +\\&+
		 \int_0^T\int_{\RR^N} \trc\{(\Omega^n Q^n - Q^n \Omega^n )\Psi\} =\Gamma \int_0^T \int_{\RR^N} \trc\{H(Q^n)\Psi\},
	\end{align*}
	for every $N\times N$-matrix $\Psi$ with coefficients in $\DD([0,T)\times \Omega)$ and
	\begin{align*}
		-\int_0^T\int_{\RR^N} u^n \cdot \partial_t \psi - \int_{\RR^N} u_0 \cdot \psi(0,\cdot) &+ \int_0^T\int_{\RR^N} (u^n\cdot \nabla u^n) \cdot \Pp J_n\psi 
		-\nu \int_0^T \int_{\RR^N} u^n\cdot \Delta \psi = \\ &=
		-L\int_0^T \int_{\RR^N} [ Q^n\Delta Q^n - \Delta Q^nQ^n - \nabla Q^n\odot \nabla Q^n ]\cdot \Pp J_n\nabla \psi,
	\end{align*}
	for any $N$-vector $\psi$ with coefficients in $\DD([0,T)\times \Omega)$, we pass through the limit as $n$ goes to $\infty$, obtaining
	\begin{align*}
		-\int_0^T\int_{\RR^N} \trc\{Q \partial_t\Psi\} - \int_{\RR^N}\trc\{ Q_0 \Psi(0,\cdot) \} &+ \int_0^T \int_{\RR^N}\trc\{ (u\cdot\nabla Q)\Psi\} +\\&+
		 \int_0^T\int_{\RR^N} \trc\{(\Omega Q - Q \Omega )\Psi\} =\Gamma \int_0^T \int_{\RR^N} \trc\{H(Q)\Psi\}
	\end{align*}
	and
	\begin{align*}
		-\int_0^T\int_{\RR^N} u \cdot \partial_t \psi - \int_{\RR^N} u_0 \cdot \psi(0,\cdot) &+ \int_0^T\int_{\RR^N} (u\cdot \nabla u) \cdot \Pp \psi 
		-\nu \int_0^T \int_{\RR^N} u\cdot \Delta \psi = \\ &=
		-L\int_0^T \int_{\RR^N} [ Q\Delta Q - \Delta Q\,Q - \nabla Q\odot \nabla Q ]\cdot \Pp \nabla \psi.
	\end{align*}
	From the arbitrariness of $T$ and $\Omega$, we finally achieve that $(u,\,Q)$ is a weak solution for \eqref{main_system} in the sense of definition \ref{def_weak_sol}.
\end{proof}

\section{The Difference Between Two Solutions}

\noindent
This section is devoted to an important remark which plays a major part in our uniqueness result. We deal with the difference between two weak solutions $(u_i,\,Q_i)$, $i=1,2$, of \eqref{main_system} in the sense of definition \ref{def_weak_sol}. Denoting by $(\delta u,\,\delta Q)$ the difference between the first and the second one, we claim that such element belongs to a lower regular space than the one the solutions belong to.
\begin{prop}\label{Proposition_regularity_difference_between_two_solutions}
	For any finite positive $T$, $\delta u$ and $\nabla \delta Q$ belong to $L^\infty(0,T; \Hh^{-1/2})$.
\end{prop}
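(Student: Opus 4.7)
My plan is to use parabolic maximal regularity together with the vanishing initial data $\delta u(0)=0$, $\delta Q(0)=0$. Subtracting the two copies of \eqref{main_system} yields
\begin{equation*}
\partial_t\delta u-\nu\Delta\delta u+\nabla\delta\Pi=F_u,\qquad\partial_t\delta Q-\Gamma L\,\Delta\delta Q=F_Q,
\end{equation*}
where $F_u,F_Q$ are bilinear in $(u_i,Q_i,\delta u,\delta Q)$. Applying the homogeneous Littlewood--Paley block $\Dd_q$, pairing with $\Dd_q\delta u$ in $L^2$, and using Bernstein's inequality on the dissipation yields $\|\Dd_q\delta u(t)\|_{L^2}\lesssim \int_0^t\|\Dd_q F_u(s)\|_{L^2}\,ds$. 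Weighting by $2^{-q/2}$, taking the $\ell^2(\ZZ)$ norm in $q$ and swapping it with the time integral by Minkowski's inequality gives
\begin{equation*}
\|\delta u\|_{L^\infty(0,T;\Hh^{-1/2})}\lesssim \|F_u\|_{L^1(0,T;\Hh^{-1/2})}.
\end{equation*}
A parallel estimate, obtained by differentiating the $\delta Q$-equation in space or by testing against $\Dd_q \delta Q$ with an additional weight $2^{q/2}$, reduces the claim about $\nabla\delta Q$ to showing $F_Q\in L^1(0,T;\Hh^{1/2})$. The task is thus to verify these two forcing-space memberships.

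For the interpolations I will need, observe that by \eqref{Thm_fr_estimates} each of $u_i$, $\delta u$, $\nabla Q_i$, $\nabla\delta Q$ lies in $L^\infty L^2\cap L^2\Hh^1$, hence in $L^q(0,T;\Hh^{2/q})$ for every $q\in[2,\infty]$. For the inertial term I would use the incompressibility to write
\begin{equation*}
u_1\cdot\nabla u_1-u_2\cdot\nabla u_2=\Div(\delta u\otimes u_1+u_2\otimes\delta u),
\end{equation*}
and then pair the two factors in $L^{8/3}(0,T;\Hh^{3/4})$: by Theorem \ref{product_Hs_Ht} with $s=t=3/4$ each tensor product lies in $L^{4/3}(0,T;\Hh^{1/2})\hookrightarrow L^1(0,T;\Hh^{1/2})$ since $T<\infty$, so the outer divergence belongs to $L^1(0,T;\Hh^{-1/2})$. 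The very same pairing handles the advection $u_1\cdot\nabla\delta Q+\delta u\cdot\nabla Q_2$ directly in $L^1(0,T;\Hh^{1/2})$, and an analogous estimate controls the Oldroyd commutators $\Omega_i\delta Q-\delta Q\,\Omega_i$ and $Q_2\,\delta\Omega-\delta\Omega\,Q_2$; the cubic nonlinearity $P(Q_1)-P(Q_2)$ factors as $\delta Q\cdot R(Q_1,Q_2)$ with $R$ polynomial of degree two and is controlled by the $L^q$-propagation of Proposition \ref{prop_friedrichs}.

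The genuine obstacle is the elastic-stress divergence
\begin{equation*}
L\,\Div\{Q_1\Delta Q_1-\Delta Q_1\,Q_1-\nabla Q_1\odot\nabla Q_1\}-L\,\Div\{Q_2\Delta Q_2-\Delta Q_2\,Q_2-\nabla Q_2\odot\nabla Q_2\},
\end{equation*}
which upon subtraction produces factors $\delta Q\,\Delta Q_1$ and $Q_2\,\Delta\delta Q$; since $\Delta(\cdot)$ lies only in $L^2L^2$, a direct application of Theorem \ref{product_Hs_Ht} fails (one would need $|s|=1$). My remedy is to exploit the algebraic identity
\begin{equation*}
\Div(\nabla Q\odot\nabla Q)=\tfrac{1}{2}\nabla|\nabla Q|^2+\Delta Q\cdot\nabla Q,
\end{equation*}
together with the analogous pointwise expansion of $\Div([Q,\Delta Q])$ and a further integration by parts, to rewrite the whole stress-divergence modulo a pure gradient (absorbable into $\nabla\delta\Pi$) as a sum of bilinear expressions in $\nabla Q_i$ and $\nabla\delta Q$ alone. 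After this rearrangement each factor carries velocity-type regularity $L^\infty L^2\cap L^2\Hh^1$, so the $L^{8/3}\Hh^{3/4}$ pairing of the previous paragraph closes the estimate. The parallel statement $F_Q\in L^1(0,T;\Hh^{1/2})$ is obtained from the same algebraic manipulations, shifting derivatives off the roughest factor inside each bilinear term so that everything reduces to the same ``velocity-regularity'' product.
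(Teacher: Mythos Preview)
Your approach has a genuine gap: by using the crude Duhamel bound $\|\Dd_q\delta u(t)\|_{L^2}\lesssim\int_0^t\|\Dd_q F_u\|_{L^2}$ you forfeit the parabolic smoothing, and this forces you into the unnecessarily strong requirements $F_u\in L^1(0,T;\Hh^{-1/2})$ and $F_Q\in L^1(0,T;\Hh^{1/2})$. These cannot be met with the available regularity. Concretely, for the Oldroyd term $\Omega_1\delta Q$ you would need the product in $\Hh^{1/2}$, i.e.\ $s+t=3/2$ in Theorem~\ref{product_Hs_Ht}; but $\Omega_1=\tfrac12(\nabla u_1-\tr\nabla u_1)$ lies only in $L^2_tL^2_x$ (no better, since $u_1\in L^\infty L^2\cap L^2\Hh^1$ gives $\nabla u_1\in L^{2/\theta}\Hh^{\theta-1}$), so the ``$L^{8/3}\Hh^{3/4}$ pairing'' you invoke is unavailable for this factor. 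Your proposed algebraic remedy for the elastic stress is also incomplete: the identity $\Div(\nabla Q\odot\nabla Q)=\tfrac12\nabla|\nabla Q|^2+\nabla Q:\Delta Q$ still contains $\Delta Q$, and rewriting $[Q,\Delta Q]=\sum_l\partial_l[Q,\partial_lQ]$ produces, after the outer divergence, a \emph{second}-order operator on $Q\,\partial_lQ$, which would require $Q\,\partial_lQ\in L^1\Hh^{3/2}$ --- out of reach since Theorem~\ref{product_Hs_Ht} caps $s+t<2$.

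The paper avoids all of this by keeping the smoothing: it shows $f_1\in L^2(0,T;\Hh^{-1/2})$ and $f_2\in L^2(0,T;\Hh^{-3/2})$, and then standard parabolic energy estimates (one derivative gained) give $\delta Q\in L^\infty\Hh^{1/2}$ and $\delta u\in L^\infty\Hh^{-1/2}$. In this weaker target space the elastic stress is immediate: $\|\Div(Q_i\Delta Q_i)\|_{\Hh^{-3/2}}\lesssim\|Q_i\Delta Q_i\|_{\Hh^{-1/2}}\lesssim\|Q_i\|_{\Hh^{1/2}}\|\Delta Q_i\|_{L^2}$ via Theorem~\ref{product_Hs_Ht} with $s=\tfrac12$, $t=0$, and similarly $\|\Omega_iQ_i\|_{\Hh^{-1/2}}\lesssim\|\nabla u_i\|_{L^2}\|Q_i\|_{\Hh^{1/2}}$. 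No algebraic rewriting is needed. The moral is to let the heat equation do the work: aim for $L^2$-in-time forcing in a space one derivative below the target, rather than $L^1$-in-time forcing at the target regularity.
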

\begin{remark}
	In virtue of Proposition \ref{Proposition_regularity_difference_between_two_solutions} and since $(\nabla \delta u,\, \Delta \delta Q)$ belongs to $L^2_t L^2_x$ then
	\begin{equation*}
		(\nabla \delta u,\, \Delta \delta Q) \in L^2(0,T; \Hh^{-1/2} ),
	\end{equation*}
	for any finite positive $T$, thanks to a classical real interpolation method:
	\begin{align*}
		\|		\nabla 	\delta u	\|_{\Hh^{-\frac{1}{2}}} 
		&\lesssim 
		\|		\nabla	\delta u	\|_{\Hh^{-\frac{3}{2}}	}^{\frac{1}{3}}
		\|		\nabla	\delta u	\|_{L^2_x				}^{\frac{2}{3}}
		\lesssim
		\|				\delta u	\|_{\Hh^{-\frac{1}{2}}	}+
		\|		\nabla	\delta u	\|_{L^2_x				},\\
		\|		\Delta 	\delta Q	\|_{\Hh^{-\frac{1}{2}}} 
		&\lesssim 
		\|		\Delta	\delta Q	\|_{\Hh^{-\frac{3}{2}}	}^{\frac{1}{3}}
		\|		\Delta	\delta Q	\|_{L^2_x				}^{\frac{2}{3}}
		\lesssim
		\|		\nabla	\delta Q	\|_{\Hh^{-\frac{1}{2}}	}+
		\|		\Delta	\delta Q	\|_{L^2_x				}.
	\end{align*}
\end{remark}
\begin{proof}[Proof of Proposition \ref{Proposition_regularity_difference_between_two_solutions}]
	Fixing $T>0$ we are going to prove that $\delta u$ belongs to $L^\infty(0,T; \Hh^{-1/2})$ and $\delta Q$ belongs to $L^\infty(0,T; \Hh^{1/2})$.
	We denote by $f_1$ and $f_2$
	\begin{align*}
		f_1 :=	- u_1 \cdot \nabla  Q_1 + u_2 \cdot  \nabla Q_2 & +  \Omega_1  Q_1 - \Omega_2  Q_2 -  Q_1 \Omega_1 +   Q_2  \Omega_2 \,+ \\&+
				\Gamma 
				\Big\{ \;
					\frac{b}{3}
					\Big( 
						 Q_1^2 - Q_2^2 - \trc\{  Q_1^2 - Q_2^2  \}\frac{\Id}{3} 
					\Big) - 
					c\, \trc \{  Q_1^2 	\} 	Q_1 +
					c\, \trc \{ Q_2^2 	\} 	Q_2 \;
				\Big\} ,
	\end{align*}
	\begin{align*}
		f_2 := \Pp\big[\, -  \Div\{ u_1 \otimes  u_1 &-  u_2 \otimes u_2 \} + L\Div\,\{\; Q_1 \Delta  Q_1 -  Q_2 \Delta  Q_2 -\\
				&- \Delta  Q_1  Q_1 + \Delta  Q_2  Q_2 - \nabla  Q_1\odot \nabla Q_1 + \nabla Q_2\odot \nabla Q_2\;\}\big],
	\end{align*}
	respectively. Then $\delta Q$ and $\delta u$ are weak solutions of the following Cauchy Problems:
	\begin{equation*}
		\partial_t \delta Q- \Gamma L\Delta \delta Q+\Gamma a \,\delta Q = f_1	\quad \text{and} \quad \partial_t \delta u- \nu \Delta \delta u = f_2
		\quad\quad\quad \text{in}\quad [0,T)\times\RR^2,
	\end{equation*}
	with null initial data. Then, by the classical Theory of Evolutionary Parabolic Equation, it is sufficient to prove that $f_1$ and $f_2$ belong to $L^2(0,T; \Hh^{-1/2})$ and 
	$L^2(0,T;\Hh^{-3/2})$ respectively in order to obtain
	\begin{equation*}
		\|(\nabla \delta u,\, \Delta \delta Q) 	\|_{L^\infty(0,T; \Hh^{-\frac{1}{2}})}\lesssim \| f_1 \|_{L^2(0,T; \Hh^{-\frac{1}{2}})} + \| f_2 \|_{L^2(0,T; \Hh^{-\frac{3}{2}})}, 
	\end{equation*}
	and conclude the proof. We start by $f_1$ and Theorem \ref{product_Hs_Ht} plays a major part. For any $i=1,\,2$, we get
	\begin{equation*}
	\begin{alignedat}{5}
		\| 			u_i 	\cdot	\nabla 	Q_i		\|_{	\Hh^{-\frac{1}{2}}	} 
		&\lesssim 
		\|			u_i								\|_{	\Hh^{ \frac{1}{2}}	}
		\|							\nabla	Q_i		\|_{	L^2_x				}
		&&\lesssim	
		\|			u_i								\|_{	L^2_x				}^\frac{1}{2}
		\|	\nabla 	u_i								\|_{	L^2_x				}^\frac{1}{2}
		\|							\nabla	Q_i		\|_{	L^2_x				} 
		&&&&\in L^4(0, T),\\
		\|		\Omega_i	\,				Q_i		\|_{	\Hh^{-\frac{1}{2}}	}
		&\lesssim
		\|	\nabla u_i								\|_{	L^2_x				}
		\|									Q_i		\|_{	\Hh^{ \frac{1}{2}}	}
		&&\lesssim
		\|	\nabla u_i								\|_{	L^2_x				}
		\|									Q_i		\|_{	L^2_x				}^\frac{1}{2}
		\|							\nabla	Q_i		\|_{	L^2_x				}^\frac{1}{2} 
		&&&&\in L^2(0, T),\\
		\|									Q_i^2	\|_{	\Hh^{-\frac{1}{2}}	} 
		&\lesssim
		\|									Q_i		\|_{	\Hh^{ \frac{1}{2}}	} 
		\|									Q_i		\|_{	L^2_x				}
		&&\lesssim
		\|									Q_i		\|_{	L^2_x				}
		\|							\nabla	Q_i		\|_{	L^2_x				}^\frac{1}{2}
		\|									Q_i		\|_{	L^2_x				}^\frac{1}{2}
		&&&&\in L^\infty(0,T),\\
		\|					\trc\{ Q_i^2\}	Q_i		\|_{	\Hh^{-\frac{1}{2}}	}
		&\lesssim
		\|									Q_i^2	\|_{	L^2_x				}
		\|									Q_i		\|_{	\Hh^{ \frac{1}{2}}	}
		&&\lesssim
		\|									Q_i		\|_{	L^4_x				}^2
		\|							\nabla	Q_i		\|_{	L^2_x				}^\frac{1}{2}
		\|									Q_i		\|_{	L^2_x				}^\frac{1}{2}
		\lesssim
		\|							\nabla	Q_i		\|_{	L^2_x				}^\frac{3}{2}
		\|									Q_i		\|_{	L^2_x				}^\frac{3}{2}
		&&&&\in L^\infty(0,T).
	\end{alignedat}
	\end{equation*}
	Then, summarizing the previous estimates, we finally deduce that $f_1$ belongs to $L^2(0, T ; \Hh^{-1/2})$. Now, let us handle the terms of $f_2$:
	\begin{equation*}
	\begin{alignedat}{5}
		\| 	\Div\{	u_i 	\otimes		 	u_i	\}	\|_{	\Hh^{-\frac{3}{2}}	}
		&\lesssim
		\| 			u_i 	\otimes		 	u_i		\|_{	\Hh^{-\frac{1}{2}}	}
		\lesssim
		\|			u_i								\|_{	\Hh^{ \frac{1}{2}}	}
		\|			u_i								\|_{	L^2_x				}
		&&\lesssim
		\|			u_i								\|_{	L^2_x				}^\frac{1}{2}
		\|	\nabla	u_i								\|_{	L^2_x				}^\frac{1}{2}
		\|			u_i								\|_{	L^2_x				}
		&&&\in L^4(0,T),\\
		\|	\Div\{	Q_i	\Delta Q_i	\}				\|_{	\Hh^{-\frac{3}{2}}	}
		&\lesssim
		\|					Q_i		\Delta Q_i		\|_{	\Hh^{-\frac{1}{2}}	}
		\lesssim
		\|					Q_i						\|_{	\Hh^{ \frac{1}{2}}	}
		\|							\Delta Q_i		\|_{	L^2_x				}
		&&\lesssim
		\|					Q_i						\|_{	L^2_x				}^\frac{1}{2}
		\|			\nabla	Q_i						\|_{	L^2_x				}^\frac{1}{2}
		\|							\Delta Q_i		\|_{	L^2_x				}
		&&&\in L^\infty(0,T)
	\end{alignedat}
	\end{equation*}
	and moreover
	\begin{align*}
		\|	\Div\{ \nabla Q_i \odot \nabla Q_i \}	\|_{	\Hh^{-\frac{3}{2}}	}
		\lesssim
		\|		\nabla	Q_i	\odot	\nabla	Q_i 	\|_{	\Hh^{-\frac{1}{2}}	}
		&\lesssim
		\|		\nabla	Q_i							\|_{	\Hh^{ \frac{1}{2}}	}
		\|		\nabla	Q_i							\|_{	L^2_x				}\\
		&\lesssim
		\|		\nabla	Q_i							\|_{	L^2_x				}^\frac{1}{2}
		\|		\Delta	Q_i							\|_{	L^2_x				}^\frac{1}{2}
		\|		\nabla	Q_i							\|_{	L^2_x				}
		\in L^4(0,T),	
	\end{align*}
	which finally implies that $f_2$ belongs to $L^2(0,T; \Hh^{-\frac{3}{2}})$. This concludes the proof of Proposition \ref{Proposition_regularity_difference_between_two_solutions}.
\end{proof}

\section{Uniqueness}
\noindent
In this section we present our first original result. We are going to prove Theorem \ref{Thm_Uniqueness}, namely the uniqueness of the weak solutions, given by Theorem \ref{Thm_Existence}. We implement the uniqueness result of Paicu and Zarnescu in \cite{MR2864407}, concerning the weak-strong uniqueness. Indeed the authors suppose that at least one of the solutions is a classical solution. The leading cause of such restriction relies on the choice to control the difference between two solutions in an $L^2_x$-setting. However, this requires to estimate the $L^\infty_x$-norm of one of the solutions, $\|(u,\,\nabla Q)\|_{L^\infty_x}$, for instance by a Sobolev embedding, therefore the necessity to put $(u(t),\,\nabla Q(t))$ in some $\Hh^s$ with $s>1$, for any real $t$.

\vspace{0.1cm}
\noindent
In this article we overcome this drawback, performing the weak-weak uniqueness, thanks to an alternative approach which is inspired by \cite{MR1813331} and \cite{MR2309504}. The main idea is to evaluate the difference between two weak solutions in a functional space which is less regular than $L^2_x$. Considering two weak solutions $(u_1,\,\nabla Q_1)$ and $(u_2,\,\nabla Q_2)$, we define $(\delta u, \delta Q)$ as the difference between the first one and the second one. It is straightforward that such difference is a weak solution for the following system:
\begin{equation}\label{delta_main_system}
	\tag{$\delta P$}
	\begin{cases}
	\;	\partial_t \delta Q + \delta u\cdot \nabla  Q_1 + u_2 \cdot  \nabla \delta Q 
		- \delta S(\nabla u,\,Q) -\Gamma L \Delta \delta Q = \Gamma \delta P( Q)															&\RR_+	\times	\RR^2,\\
	\;	\partial_t \delta u + \delta u \cdot \nabla u_1 +  u_2 \cdot \nabla \delta u -\nu \Delta \delta u +\nabla \delta \Pi= 
		L\Div\,\big\{\;\delta Q \Delta  Q_1 +  Q_2 \Delta \delta Q - \\ 
		\quad\quad\quad\quad\quad\quad\quad\quad\quad\quad\quad\quad
		- \Delta \delta Q  Q_1 - \Delta  Q_2 \delta Q - 
		\nabla \delta Q\odot \nabla Q_1 - \nabla Q_2\odot \nabla \delta Q\;\big\}															&\RR_+	\times	\RR^2,\\
	\;	\Div\,\delta u = 0																													&\RR_+	\times	\RR^2,\\
	\;	(\delta u,\,\delta Q)_{t=0} = (0,\, 0)																								&\quad\quad\;\;\RR^2,
	\end{cases}
\end{equation}
where we have also defined
\begin{equation*}
	 \delta \Omega 			 := \Omega_1 - \Omega_2	,	\quad
	 \delta \Pi 			 := \Pi_1 	- \Pi_2		,	\quad
	 \delta	P(Q)			 := P(Q_1)	- P(Q_2)	.
\end{equation*}
and moreover
\begin{equation*}
	 \delta S(Q,\,\nabla u)	 := \Omega_1  Q_1 - Q_1 \Omega_1 + \Omega_2  Q_2 - Q_2 \Omega_2
	 						  = \delta Q \delta \Omega - \delta \Omega \delta Q  + 	\delta \Omega Q_2 - Q_2 \delta \Omega  +  \Omega_2 \delta Q-\delta Q \Omega_2. 
\end{equation*}
Recalling the previous subsection, we take the $\Hh^{-1/2}$-inner product between the first equation of \eqref{delta_main_system} and  $-L\Delta \delta Q$ and moreover we consider the scalar product in $\Hh^{-1/2}$ between the second one and $\delta u$:
\begin{equation}\label{uniqueness_energy_equality}
\begin{split}
	\frac{\dd}{\dd t}
	\Big[&
		\frac{1}{2}	\|			\delta u	\|_{\dot{H}^{-\frac{1}{2}}}^2 + 
		L			\|	\nabla	\delta Q	\|_{\dot{H}^{-\frac{1}{2}}}^2
	\Big]+
	\nu				\|	\nabla 	\delta u	\|_{\dot{H}^{-\frac{1}{2}}}^2 +
	\Gamma L^2		\|	\Delta	\delta Q	\|_{\dot{H}^{-\frac{1}{2}}}^2
	= \\=\;&
	L\Gamma \langle		\delta	P(Q) 				 ,	\Delta	\delta	Q	\rangle_{\dot{H}^{-\frac{1}{2}}} -	
	L		\langle		\delta	u\cdot \nabla Q_1	 ,	\Delta	\delta	Q	\rangle_{\dot{H}^{-\frac{1}{2}}} +
	L		\langle		u_2	\cdot \nabla \delta Q	 ,	\Delta	\delta	Q	\rangle_{\dot{H}^{-\frac{1}{2}}} +	\\&+
	L		\langle		\delta S(Q, \nabla u)	 	 ,	\Delta	\delta	Q	\rangle_{\dot{H}^{-\frac{1}{2}}} -	
	L		\langle		\nabla \delta Q \odot
						\nabla Q_1					 ,	\nabla	\delta	u	\rangle_{\dot{H}^{-\frac{1}{2}}} +
	L		\langle		\nabla Q_2 		\odot 
						\nabla \delta Q				 ,	\nabla	\delta	u	\rangle_{\dot{H}^{-\frac{1}{2}}} -	\\&
	\quad\quad\quad\quad\quad-
			\langle		\delta u\cdot\nabla u_1		 ,			\delta	u	\rangle_{\dot{H}^{-\frac{1}{2}}} -	 
			\langle		u_2\cdot\nabla \delta u		 ,			\delta	u	\rangle_{\dot{H}^{-\frac{1}{2}}} +
	L		\langle		\delta Q \Delta \delta Q -  
						\Delta \delta Q \delta Q	 ,	\nabla	\delta	u	\rangle_{\dot{H}^{-\frac{1}{2}}} +	\\&
	\quad\quad\quad\quad\quad\quad\quad\quad\quad\quad\,+
	L		\langle		Q_2 \Delta \delta Q	-		 
						\Delta \delta Q   Q_2 		 ,	\nabla	\delta	u	\rangle_{\dot{H}^{-\frac{1}{2}}}+
	L		\langle		\delta Q \Delta   Q_2-		
						\Delta Q_2 \delta Q 		 ,	\nabla	\delta	u	\rangle_{\dot{H}^{-\frac{1}{2}}}. 
\end{split}
\end{equation}
Denoting by 
		$\Phi(t)= 
		\frac{1}{2}	\|			\delta u(t)	\|_{\dot{H}^{-\frac{1}{2}}}^2 +
		L			\|	\nabla	\delta Q(t)	\|_{\dot{H}^{-\frac{1}{2}}}^2 $
we claim that
\begin{equation*}
	\frac{\dd}{\dd t}\Phi(t)\leq \chi(t)\Phi(t),\quad\quad\text{for almost every}\quad t\in \RR_+,
\end{equation*}
where $\chi\geq 0 $ belongs to $L^1_{loc}(\RR_+)$.
Hence, uniqueness holds thanks to the Gronwall Lemma and since $\Phi(0)$ is null. Thus, we need to analyze every terms of the right-hand side of \eqref{uniqueness_energy_equality}. From here on $C_{\Gamma, L}$ and $C_\nu$ are suitable positive constants which will be determined in the end of the proof.
\subsection*{Simpler Terms} First, we begin evaluating every term which is handleable by Theorem \ref{product_Hs_Ht}. 

\vspace{0.2cm}
\noindent \underline{Estimate of $	\Gamma L \langle\delta	P(Q) ,
									\Delta\delta Q\rangle_{\dot{H}^{-\frac{1}{2}}}	$}

\vspace{0.1cm}
\noindent From the definition of $\delta P(Q)$, and since $\trc\{\Delta Q\} $ is null, we need to control 
\begin{equation*}
\begin{split}
	  \Gamma L 			\langle 		\delta	P(Q) 								, \Delta\delta Q   &\rangle_{\Hh^{-\frac{1}{2}} }
	=
	- \Gamma L a		\|					\nabla \delta Q													 \|_{\Hh^{-\frac{1}{2}} }^2   
	+ \Gamma L b		\langle			\delta Q\,Q_1 + Q_2  \delta Q				, \Delta \delta Q 	\rangle_{\Hh^{-\frac{1}{2}} } \\&
	- \Gamma L c		\langle 		\delta Q\trc\{Q_1^2\}						, \Delta \delta Q	\rangle_{\Hh^{-\frac{1}{2}} } 
	- \Gamma L c		\langle			\trc\{\,\delta Q\,Q_1 + Q_2\delta Q\,\}Q_1	, \Delta \delta Q	\rangle_{\Hh^{-\frac{1}{2}} }. 
\end{split}
\end{equation*}
We overcome the second term in the right hand-side of the equality as follows:
\begin{equation*}
\begin{split}
	\Gamma L b
	\langle	
		\delta Q\,Q_1 + Q_2  \delta Q	, \Delta \delta Q 	
	\rangle_{\Hh^{-\frac{1}{2}} }
	&\lesssim
	\|			\delta 	Q			\|_{	\Hh^{ \frac{1}{2} } }
	\|					(Q_1,\,Q_2)	\|_{	L^2_x				}
	\|	\Delta	\delta	Q			\|_{	\Hh^{-\frac{1}{2} } }\\
	&\lesssim
	\|	\nabla	\delta 	Q			\|_{	\Hh^{-\frac{1}{2} } }^2
	\|					(Q_1,\,Q_2)	\|_{	L^2_x				}^2+
	C_{\Gamma, L}
	\|	\Delta	\delta 	Q			\|_{	\Hh^{-\frac{1}{2} }	}^2.
\end{split}
\end{equation*}
Furthermore, we observe that
\begin{equation*}
\begin{split}
	\Gamma& L c			
	\langle 
		\delta Q\trc\{Q_1^2\}, \Delta \delta Q	
	\rangle_{\Hh^{-\frac{1}{2}} }
	\lesssim
	\|			\delta	Q		\|_{	\Hh^{ \frac{1}{2} } }
	\|					Q_1^2	\|_{	L^2_x}
	\|	\Delta	\delta	Q		\|_{	\Hh^{-\frac{1}{2} } }
	\lesssim
	\|	\nabla	\delta	Q		\|_{	\Hh^{-\frac{1}{2} } }
	\|					Q_1		\|_{	L^4_x				}^2
	\|	\Delta	\delta	Q		\|_{	\Hh^{-\frac{1}{2} } }\\
	&\lesssim
	\|	\nabla	\delta	Q		\|_{	\Hh^{-\frac{1}{2} } }
	\|					Q_1		\|_{	L^2_x				}
	\|	\nabla			Q_1		\|_{	L^2_x				}
	\|	\Delta	\delta	Q		\|_{	\Hh^{-\frac{1}{2} } }
	\lesssim
	\|	\nabla	\delta	Q		\|_{	\Hh^{-\frac{1}{2} } }^2
	\|					Q_1		\|_{	L^2_x				}^2
	\|	\nabla			Q_1		\|_{	L^2_x				}^2+
	C_{\Gamma, L}
	\|	\Delta	\delta 	Q		\|_{	\Hh^{-\frac{1}{2} }	}^2	
\end{split}
\end{equation*}
and moreover
\begin{equation*}
\begin{alignedat}{6}
	\Gamma &L c			
	\langle	
		\trc\{\,\delta Q\,Q_1 + Q_2\delta Q\,\}Q_1	, \Delta \delta Q	
	\rangle_{\Hh^{-\frac{1}{2}} }
	&&\lesssim
	\|			\delta 	Q			\|_{	\Hh^{ \frac{1}{2} } }
	\Big(
		\|				|Q_1|^2		\|_{	L^2_x				} +
		\|				|Q_2||Q_1|	\|_{	L^2_x				}
	\Big)
	\|	\Delta	\delta	Q			\|_{	\Hh^{-\frac{1}{2} } }\\
	&\lesssim
	\|	\nabla	\delta 	Q			\|_{	\Hh^{-\frac{1}{2} } }
	\|					(Q_1,\,Q_2)	\|_{	L^4_x				}^2
	\|	\Delta	\delta	Q			\|_{	\Hh^{-\frac{1}{2} } }
	&&\lesssim
	\|	\nabla	\delta 	Q			\|_{	\Hh^{-\frac{1}{2} } }
	\|					(Q_1,\,Q_2)	\|_{	L^2_x				}
	\|	\nabla			(Q_1,\,Q_2)	\|_{	L^2_x				}
	\|	\Delta	\delta	Q			\|_{	\Hh^{-\frac{1}{2} } }\\
	&\quad\quad\quad\quad\quad\quad\quad\quad\quad\quad\quad\quad
	\lesssim
	\|	\nabla	\delta 	Q			\|_{	\Hh^{-\frac{1}{2} } }^2	&&
	\|					(Q_1,\,Q_2)	\|_{	L^2_x				}^2
	\|	\nabla			(Q_1,\,Q_2)	\|_{	L^2_x				}^2+
	C_{\Gamma,L}
	\|	\Delta	\delta 	Q		\|_{	\Hh^{-\frac{1}{2} }	}^2.
\end{alignedat}
\end{equation*}
Finally, summarizing the previous inequality, we get
\begin{equation*}
	\Gamma L 	\langle 		\delta	P(Q) 	, \Delta\delta Q   \rangle_{\Hh^{-\frac{1}{2}} }
	\lesssim
	\|	\nabla	\delta 	Q			\|_{	\Hh^{-\frac{1}{2} } }^2
	\|					(Q_1,\,Q_2)	\|_{	L^2_x				}^2
	\|	\nabla			(Q_1,\,Q_2)	\|_{	L^2_x				}^2+
	C_{\Gamma,L}
	\|	\Delta	\delta 	Q		\|_{	\Hh^{-\frac{1}{2} }	}^2
\end{equation*}

\vspace{0.2cm}
\noindent \underline{Estimate of $L		\langle		\delta	u\cdot \nabla Q_1	,	\Delta	\delta	Q	\rangle_{\dot{H}^{-\frac{1}{2}}}$}
\begin{equation*}
\begin{split}
	L		\langle		&\delta	u\cdot \nabla Q_1	,	\Delta	\delta	Q	\rangle_{\dot{H}^{-\frac{1}{2}}}
	\lesssim
	\|			\delta	u		\|_{\Hh^{-\frac{1}{4}} }
	\|	\nabla 			Q_1		\|_{\Hh^{ \frac{3}{4}} }
	\|	\Delta	\delta	Q		\|_{\Hh^{-\frac{1}{2}} }
	\lesssim
	\|			\delta	u		\|_{\Hh^{-\frac{1}{2}} }^{\frac{3}{4}}
	\|	\nabla 	\delta	u		\|_{\Hh^{-\frac{1}{2}} }^{\frac{1}{4}}
	\|	\nabla 			Q_1		\|_{	L^2_x		   }^{\frac{1}{4}}{\scriptstyle \times} \\&{\scriptstyle \times}
	\|	\Delta 			Q_1		\|_{	L^2_x		   }^{\frac{3}{4}}
	\|	\Delta 	\delta	Q		\|_{\Hh^{-\frac{1}{2}} }
	\lesssim 
	C_{\Gamma,L}
	\|	\Delta 	\delta	Q		\|_{\Hh^{-\frac{1}{2}} }^2 +
	C_{\nu}
	\|	\nabla 	\delta u		\|_{\Hh^{-\frac{1}{2}}}^2 +
	\|	\nabla 			Q_1		\|_{	L^2_x		  }^{\frac{2}{3}}
	\|	\Delta 			Q_1		\|_{	L^2_x		  }^2
	\|			\delta	u		\|_{\Hh^{-\frac{1}{2}}}^2.
\end{split}
\end{equation*}

\vspace{0.2cm}
\noindent \underline{Estimate of $L		\langle		u_2	\cdot \nabla \delta Q	,	\Delta	\delta	Q	\rangle_{\dot{H}^{-\frac{1}{2}}}$}
\begin{equation*}
\begin{split}
	L		\langle		u_2	\cdot \nabla \delta Q	,	\Delta	\delta	Q	\rangle_{\dot{H}^{-\frac{1}{2}}}
	&\lesssim
	\|u_2\|_{\Hh^{\frac{3}{4}}}\|\nabla \delta Q\|_{\Hh^{-\frac{1}{4}}}\|\Delta \delta Q\|_{\Hh^{-\frac{1}{2}}}
	\lesssim
	\|u_2\|_{L^2_x}^\frac{1}{4}\|\nabla u_2\|_{L^2_x}^\frac{3}{4}
	\|\nabla \delta Q\|_{\Hh^{-\frac{1}{2}}}^\frac{3}{4}\|\Delta \delta Q\|_{\Hh^{-\frac{1}{2}}}^\frac{5}{4}\\
	&\lesssim
	C_{\Gamma,L}\|\Delta \delta Q\|_{\Hh^{-\frac{1}{2}}}^2 +
	\|u_2\|_{L^2_x}^{\frac{2}{3}}\|\nabla u_2\|_{L^2_x}^{2}\|\nabla \delta Q\|_{\Hh^{-\frac{1}{2}}}^{2}.
\end{split}
\end{equation*}

\vspace{0.2cm}
\noindent \underline{Estimate of $L\langle 	\delta Q \delta \Omega - \delta \Omega \delta Q	,\Delta\delta Q	\rangle_{\dot{H}^{-\frac{1}{2}}}$}
\begin{equation*}
\begin{aligned}
	L\langle 	
		\delta Q \delta \Omega - \delta \Omega \delta Q	,\Delta\delta Q
	\rangle_{\dot{H}^{-\frac{1}{2}}}
	&\lesssim
	\|			\delta	Q		\|_{	\Hh^{ \frac{1}{2} } }
	\|			\delta	\Omega	\|_{	L^2_x				}
	\|	\Delta	\delta	Q		\|_{	\Hh^{-\frac{1}{2} } }\\
	&\lesssim
	\|	\nabla	(u_1,\,u_2)		\|_{	L^2_x				}^2
	\|	\nabla	\delta	Q		\|_{	\Hh^{-\frac{1}{2} } }^2+
	C_{\Gamma,L}
	\|	\Delta	\delta 	Q		\|_{	\Hh^{-\frac{1}{2} }	}^2.
\end{aligned}
\end{equation*}

\vspace{0.2cm}
\noindent \underline{Estimate of $L\langle		\Omega_2 \delta Q-\delta Q \Omega_2      ,\Delta\delta Q	\rangle_{\dot{H}^{-\frac{1}{2}}}$}
\begin{equation*}
	L\langle		
		\Omega_2 \delta Q-\delta Q \Omega_2      ,\Delta\delta Q	
	\rangle_{\dot{H}^{-\frac{1}{2}}}
	\lesssim
	\|				\Omega_2	\|_{	L^2_x				}
	\|			\delta	Q		\|_{	\Hh^{ \frac{1}{2} } }
	\|	\Delta	\delta	Q		\|_{	\Hh^{-\frac{1}{2} } }
	\lesssim
	\|	\nabla			u_2		\|_{	L^2_x				}^2
	\|	\nabla	\delta	Q		\|_{	\Hh^{-\frac{1}{2} } }^2+
	C_{\Gamma,L}
	\|	\Delta	\delta 	Q		\|_{	\Hh^{-\frac{1}{2} }	}^2.
\end{equation*}

\vspace{0.2cm}
\noindent \underline{Estimate of $L 	\langle	\nabla \delta Q \odot \nabla Q_1,	\nabla	\delta	u	\rangle_{\dot{H}^{-\frac{1}{2}}}$}
\begin{equation*}
\begin{split}
	L 	\langle	
			\nabla \delta Q \odot \nabla Q_1,	\nabla	\delta	u	
		\rangle_{\dot{H}^{-\frac{1}{2}}}
	&\lesssim
	\|	\nabla \delta	Q	\|_{\Hh^{-\frac{1}{4}}}
	\|	\nabla 			Q_1	\|_{\Hh^{ \frac{3}{4}}}
	\|	\nabla \delta	u	\|_{\Hh^{-\frac{1}{2}}}\\
	&\lesssim
	\|	\nabla \delta 	Q	\|_{\Hh^{-\frac{1}{2}}}^\frac{3}{4}
	\|	\Delta \delta 	Q	\|_{\Hh^{-\frac{1}{2}}}^\frac{1}{4}
	\|	\nabla 			Q_1	\|_{	L^2_x		  }^\frac{1}{4}
	\|	\Delta 			Q_1	\|_{	L^2_x		  }^\frac{3}{4}
	\|	\nabla \delta 	u	\|_{\Hh^{-\frac{1}{2}}}\\
	&\lesssim
	C_{\Gamma,L}	\|\Delta \delta Q\|_{\Hh^{-\frac{1}{2}}}^2 +
	C_{\nu}	\|\nabla \delta u\|_{\Hh^{-\frac{1}{2}}}^2 +
	\|	\nabla 			Q_1	\|_{	L^2_x		  }^\frac{2}{3}
	\|	\Delta 			Q_1	\|_{	L^2_x		  }^2
	\|	\nabla \delta 	Q	\|_{\Hh^{-\frac{1}{2}}}^2	
\end{split}
\end{equation*}

\vspace{0.2cm}
\noindent \underline{Estimate of $L 	\langle	\nabla Q_2\odot\nabla\delta Q, \,	\nabla\delta u	\rangle_{\dot{H}^{-\frac{1}{2}}}$}
\begin{equation*}
\begin{split}
	L 	\langle	
			\nabla Q_2\odot\nabla\delta Q,\,	\nabla\delta u	
		\rangle_{\dot{H}^{-\frac{1}{2}}}
	&\lesssim
	\|	\nabla \delta	Q	\|_{\Hh^{-\frac{1}{4}}}
	\|	\nabla 			Q_2	\|_{\Hh^{ \frac{3}{4}}}
	\|	\nabla \delta	u	\|_{\Hh^{-\frac{1}{2}}}\\
	&\lesssim
	\|	\nabla \delta 	Q	\|_{\Hh^{-\frac{1}{2}}}^\frac{3}{4}
	\|	\Delta \delta 	Q	\|_{\Hh^{-\frac{1}{2}}}^\frac{1}{4}
	\|	\nabla 			Q_2	\|_{	L^2_x		  }^\frac{1}{4}
	\|	\Delta 			Q_2	\|_{	L^2_x		  }^\frac{3}{4}
	\|	\nabla \delta 	u	\|_{\Hh^{-\frac{1}{2}}}\\
	&\lesssim
	C_{\Gamma,L}	\|\Delta \delta Q\|_{\Hh^{-\frac{1}{2}}}^2 +
	C_{\nu}	\|\nabla \delta u\|_{\Hh^{-\frac{1}{2}}}^2 +
	\|	\nabla 			Q_2	\|_{	L^2_x		  }^\frac{2}{3}
	\|	\Delta 			Q_2	\|_{	L^2_x		  }^2
	\|	\nabla \delta 	Q	\|_{\Hh^{-\frac{1}{2}}}^2	
\end{split}
\end{equation*}

\vspace{0.2cm}
\noindent \underline{Estimate of $\langle	\delta u\cdot\nabla u_1		,	\delta	u	\rangle_{\dot{H}^{-\frac{1}{2}}}$}
\begin{equation*}
\begin{split}
	\langle		
		\delta u\cdot\nabla u_1		,	\delta	u
	\rangle_{\dot{H}^{-\frac{1}{2}}}
	\lesssim
	\|			\delta 		u	\|_{\Hh^{ \frac{1}{2}}}
	\|	\nabla 				u_1	\|_{	L^2_x		  }
	\|			\delta 		u	\|_{\Hh^{-\frac{1}{2}}}
	\lesssim
	C_{\nu}
	\|	\nabla 	\delta 		u	\|_{\Hh^{-\frac{1}{2}}}^2 +
	\|	\nabla 				u_1	\|_{	L^2_x		  }^2
	\|			\delta 		u	\|_{\Hh^{-\frac{1}{2}}}^2
\end{split}
\end{equation*}

\vspace{0.2cm}
\noindent \underline{Estimate of $\langle		u_2	\cdot	\nabla \delta u	,	\delta	u	\rangle_{\dot{H}^{-\frac{1}{2}}}$}
\begin{equation*}
\begin{split}
	\langle		
		u_2	\cdot	\nabla \delta u	,	\delta	u	
	\rangle_{\dot{H}^{-\frac{1}{2}}}
	\lesssim
	\|			\delta 		u	\|_{\Hh^{ \frac{1}{2}}}
	\|	\nabla 				u_2	\|_{	L^2			  }
	\|			\delta 		u	\|_{\Hh^{-\frac{1}{2}}}
	\lesssim
	C_{\nu}
	\|	\nabla 	\delta 		u	\|_{\Hh^{-\frac{1}{2}}}^2 +
	\|	\nabla 				u_2	\|_{	L^2			  }^2
	\|			\delta 		u	\|_{\Hh^{-\frac{1}{2}}}^2
\end{split}
\end{equation*}

\vspace{0.2cm}
\noindent \underline{Estimate of $L \langle \delta Q \Delta \delta Q - \Delta \delta Q \delta Q		 ,	\nabla	\delta	u	\rangle_{\dot{H}^{-\frac{1}{2}}}$}

\vspace{0.1 cm}
\begin{equation*}
\begin{split}
	 L\langle \delta Q \Delta \delta Q-\Delta \delta Q \delta Q 		 ,	\nabla	\delta	u	\rangle_{\dot{H}^{-\frac{1}{2}}}
	 &\lesssim
	 \|	\Delta		(Q_1,Q_2)	\|_{	L^2_x				}
	 \|	\nabla	\delta	Q		\|_{	\Hh^{-\frac{1}{2} } }
	 \|	\nabla	\delta	u		\|_{	\Hh^{-\frac{1}{2} } }\\
	 &\lesssim
	 \|	\Delta		(Q_1,Q_2)	\|_{	L^2_x				}^2
	 \|	\nabla	\delta	Q		\|_{	\Hh^{-\frac{1}{2} } }^2+
	C_{\nu}
	 \|	\nabla	\delta	u		\|_{	\Hh^{-\frac{1}{2} } }^2.
\end{split}
\end{equation*}

\vspace{0.2cm}
\noindent \underline{Estimate of $L \langle \delta Q \Delta Q_2 - \Delta Q_2 \delta Q		 ,	\nabla	\delta	u	\rangle_{\dot{H}^{-\frac{1}{2}}}$}

\vspace{0.1 cm}
\begin{equation*}
\begin{split}
	 L\langle \delta Q \Delta Q_2-\Delta Q_2 \delta Q 		 ,	\nabla	\delta	u	\rangle_{\dot{H}^{-\frac{1}{2}}}
	 &\lesssim
	 \|	\Delta			Q_2		\|_{	L^2_x				}
	 \|	\nabla	\delta	Q		\|_{	\Hh^{-\frac{1}{2} } }
	 \|	\nabla	\delta	u		\|_{	\Hh^{-\frac{1}{2} } }\\
	 &\lesssim
	 \|	\Delta			Q_2		\|_{	L^2_x				}^2
	 \|	\nabla	\delta	Q		\|_{	\Hh^{-\frac{1}{2} } }^2+
	C_{\nu}
	 \|	\nabla	\delta	u		\|_{	\Hh^{-\frac{1}{2} } }^2.
\end{split}
\end{equation*}

\subsection*{The Residual Terms}

\vspace{0.1cm}
\noindent Now we deal with the terms in the right-hand side of \eqref{uniqueness_energy_equality} which we have not evaluated yet, namely
\begin{equation}\label{uniqueness_challenging_term}
	L\langle		\delta \Omega Q_2 - Q_2 \delta \Omega      ,\Delta\delta Q	\rangle_{\dot{H}^{-\frac{1}{2}}} +
	L		\langle		Q_2 \Delta \delta Q	-		 
						\Delta \delta Q  Q_2 		 ,	\nabla	\delta	u	\rangle_{\dot{H}^{-\frac{1}{2}}}.
\end{equation}
Here, the difference between the two solutions appears with the higher derivative-order, more precisely the inner product is driven by $\nabla \delta u$ ( i.e. $\delta \Omega$) and $\Delta \delta Q$. This clearly generates a drawback if we want to analyze every remaining term, proceeding as the previous estimates. Let us remark that if we consider the $L^2_x$-inner product instead of the $\Hh^{-1/2}$-one, then this last sum is null, thanks to Lemma \ref{apx_lemma_omega_Q_u}. However the $\Hh^{-1/2}$-setting force us to analyze such sum, and we overcome the described obstacle, first considering the equivalence between $\Hh^{-1/2}$ and $\BB_{2,2}^{-1/2}$, and moreover thanks to decomposition \eqref{simmmetric_decomposition}, namely
\begin{equation*}
\begin{array}{ll}
	 \J_q^1(A,B)	:=\sum_{|q-q'|\leq 5	}	[\Dd_q, \,		\Sd_{q'-1}A]			\Dd_{q'}	B,	
	&\J_q^3(A,B)	:=							\Sd_{q -1}	A							\Dd_{q}		B,\\
	\\
	 \J_q^2(A,B)	:=\sum_{|q-q'|\leq 5	}	(	\Sd_{q'-1}A	-\Sd_{q-1}A)	\Dd_{q}	\Dd_{q'}	B,		
	&\J_q^4(A,B)	:=\sum_{ q'   \geq q- 5 }	\Dd_q(\Dd_{q'}A\,						\Sd_{q'+2}	B),
\end{array}
\end{equation*}
with
\begin{equation*}
\Dd_q(AB)	=		\J_q^1(A,B)
				+	\J_q^2(A,B)
				+	\J_q^3(A,B)
				+	\J_q^4(A,B),
\quad\quad\text{for any integer}\; q.				
\end{equation*}

\vspace{0.2cm}
\noindent
First, let us begin with
\begin{equation*}
	L\langle	\delta \Omega Q_2 ,\Delta\delta Q	\rangle_{\dot{H}^{-\frac{1}{2}}} 
	= 
	\sum_{q\in \ZZ} 2^{-q} 	L\langle \Dd_q ( \delta \Omega Q_2)		, \Dd_q \Delta \delta Q\rangle_{L^2_x}
	= 
	\sum_{q\in \ZZ} 
	\sum_{i=1}^4	2^{-q}	L\langle \J_q^i ( \delta \Omega,\, Q_2)	, \Dd_q \Delta \delta Q\rangle_{L^2_x}.
\end{equation*}
First we separately study the case $i=1,2,4$. The term related to $i=3$ is the challenging one and we are not able to evaluate it. However, we will see how such term is going to be erased.
Let us begin with $i=1$ then
\begin{equation*}
\begin{split}
	\I_q^1:=
	2^{-q}L\langle \J_q^1 ( \delta \Omega,\, Q_2)	, \Dd_q \Delta \delta Q\rangle_{L^2_x} 
	&= 
	L 2^{-q}
	\sum_{|q-q'|\leq 5	}
	\langle
		[\Dd_q,\, \Sd_{q'-1}Q_2 ]\Dd_{q'}\delta \Omega, \Dd_q \Delta \delta Q
	\rangle_{L^2_x}\\
	&\lesssim
	\sum_{|q-q'|\leq 5	}
	2^{-q}
	\|	[\Dd_q,\, \Sd_{q'-1}Q_2 ]\Dd_{q'}\delta \Omega	\|_{L^2_x		}
	\|	\Dd_q \Delta \delta Q							\|_{L^2_x		}.
\end{split}
\end{equation*} 
Hence, applying the commutator estimate (see Lemma $2.97$ in \cite{MR2768550}) we get
\begin{equation*}
\begin{split}
	\I_q^1
	\lesssim
	\sum_{|q-q'|\leq 5	}
	2^{-2q}
	\|	\Sd_{q'-1}\nabla Q_2							\|_{L^4_x	}
	\|	\Dd_{q'}\delta \Omega							\|_{L^4_x		}
	\|	\Dd_q \Delta \delta Q							\|_{L^2_x		}
	\lesssim
	\sum_{|q-q'|\leq 5	}
	\|	\Sd_{q'-1}\nabla 	Q_2							\|_{L^2_x		}^{\frac{1}{2}}
	\|	\Sd_{q'-1}\Delta 	Q_2							\|_{L^2_x		}^{\frac{1}{2}} {\scriptstyle \times} \\ {\scriptstyle \times}
	2^{-\frac{q'}{2}}
	\|	\Dd_{q'}\delta 	  	u							\|_{L^4_x		}
	2^{-\frac{q}{2}}
	\|	\Dd_q \Delta \delta	Q							\|_{L^2_x		}
	\lesssim
	\sum_{|q-q'|\leq 5	}
	\|	\nabla 				Q_2							\|_{L^2_x		}^{\frac{1}{2}}
	\|	\Delta 				Q_2							\|_{L^2_x		}^{\frac{1}{2}}
	\|	\Dd_{q'}\delta 	  	u							\|_{L^2_x		}
	2^{-\frac{q}{2}}
	\|	\Dd_q \Delta \delta	Q							\|_{L^2_x		},
\end{split}
\end{equation*}
which finally yields
\begin{equation*}
\begin{split}
	\I_q^1
	&\lesssim
	\|	\nabla		Q_2			\|_{			L^2_x			  }^\frac{1}{2}
	\|	\Delta		Q_2			\|_{			L^2_x			  }^\frac{1}{2}
	\|			\delta	u		\|_{			L^2_x			  }
	\|	\Delta	\delta	Q		\|_{	\Hh^{	-\frac{1}{2}	} }
	\lesssim
	\|	\nabla		Q_2			\|_{			L^2_x			  }^\frac{1}{2}
	\|	\Delta		Q_2			\|_{			L^2_x			  }^\frac{1}{2}
	\|			\delta	u		\|_{	\Hh^{	-\frac{1}{2}	}  }^\frac{1}{2}
	\|	\nabla	\delta	u		\|_{	\Hh^{	-\frac{1}{2}	}  }^\frac{1}{2}
	\|	\Delta	\delta	Q		\|_{	\Hh^{	-\frac{1}{2}	} },
\end{split}
\end{equation*}
that is
\begin{equation}\label{uniqueness_est_J_1}
\begin{split}
	L\sum_{q\in \ZZ}2^{-q}\langle \J_q^1 ( \delta \Omega,\, Q_2)	,& \Dd_q \Delta \delta Q\rangle_{L^2_x}
	\lesssim\\
	&\lesssim
	\|	\nabla			Q_2		\|_{	L^2_x					  }^2
	\|	\Delta			Q_2		\|_{	L^2_x					  }^2		
	\|			\delta	u		\|_{	\Hh^{-\frac{1}{2}		} }^2+
	C_{\nu}
	\|	\nabla	\delta	u		\|_{	\Hh^{-\frac{1}{2}		} }^2+
	C_{\Gamma, L}
	\|	\Delta	\delta	Q		\|_{	\Hh^{-\frac{1}{2} } }^2.
\end{split}
\end{equation}
Now, let us handle the case $i=2$. We argued almost as before:
\begin{equation*}
\begin{split}
	\I_q^2:= 2^{-q}L\langle \J_q^2 ( \delta \Omega,\, Q_2)	, \Dd_q \Delta \delta Q\rangle_{L^2_x} 
	&=
	L 2^{-q}
	\sum_{|q-q'|\leq 5	}
	\langle
		(\Sd_{q'-1}Q_2 - \Sd_{q-1}Q_2)\Dd_q \Dd_{q'}\delta \Omega, \Dd_q \Delta \delta Q
	\rangle_{L^2_x}\\
	&\lesssim
	2^{-q}
	\|	(\Sd_{q'-1}Q_2 - \Sd_{q-1}Q_2)					\|_{L^\infty_x}
	\|	\Dd_q \Dd_{q'}\delta \Omega						\|_{L^2_x}
	\|	\Dd_q \Delta \delta Q							\|_{L^2_x},
\end{split}
\end{equation*}
so that, observing that $\Sd_{q'-1}Q_2 - \Sd_{q-1}Q_2$ fulfills
\begin{equation*}
	\|	\Sd_{q'-1}			Q_2	-	\Sd_{q-1}		Q_2 \|_{L^\infty_x}
	\lesssim
	2^{-2q}
	\|	\Sd_{q'-1}	\Delta	Q_2	-	\Sd_{q-1}\Delta Q_2 \|_{L^\infty_x}
	\lesssim
	2^{- q}
	\|	\Sd_{q'-1}	\Delta	Q_2	-	\Sd_{q-1}\Delta Q_2 \|_{L^2_x	  },
\end{equation*}
then we obtain
\begin{equation*}
\begin{split}
	\I_q^2
	\lesssim
	2^{-2q}
	\sum_{|q-q'|\leq 5	}
	\|	&(\Sd_{q'-1}\Delta Q_2 - \Sd_{q-1}\Delta Q_2)	\|_{L^2_x}
	\|	\Dd_q \Dd_{q'}\delta \Omega						\|_{L^2_x}
	\|	\Dd_q \Delta \delta Q							\|_{L^2_x}
	\lesssim
	2^{-2q}
	\sum_{|q-q'|\leq 5	}
	\|	\Delta Q_2							\|_{L^2_x}{\scriptstyle \times} \\ &{\scriptstyle \times}
	\|	\Dd_{q'}\delta \Omega							\|_{L^2_x} 
	\|	\Dd_q \Delta \delta Q							\|_{L^2_x}
	\lesssim
	\sum_{|q-q'|\leq 5	}
	2^{-\frac{q'}{2}}
	\|	\Dd_{q'}		\delta u						\|_{L^2_x}
	2^{-\frac{q}{2}}
	\|	\Dd_q \Delta 	\delta Q						\|_{L^2_x}
	\|	\Delta Q_2										\|_{L^2_x}.
\end{split}	
\end{equation*}
Thus, it turns out that
\begin{equation}\label{uniqueness_est_J_2}
	L\sum_{q\in \ZZ} 2^{-q}\langle \J_q^2 ( \delta \Omega,\, Q_2)	, \Dd_q \Delta \delta Q\rangle_{L^2_x} \lesssim
	\|		\Delta			Q_2			\|_{	L^2_x				}^2
	\|				\delta	u			\|_{	\Hh^{-\frac{1}{2} } }^2+
	C_{\Gamma,L}
	\|		\Delta	\delta	Q			\|_{	\Hh^{-\frac{1}{2} } }^2
\end{equation}

\vspace{0.1cm}
\noindent
Now, we take into consideration the case $i=4$. Here we will use a convolution method and the Young inequality, since the sum in $q'$ is not finite. Then, let us observe that
\begin{equation*}
\begin{split}
	\I_q^4 := 
	2^{-q}L\langle \J_q^4 ( \delta \Omega,\, Q_2)	, \Dd_q \Delta \delta Q\rangle_{L^2_x} 
	&= L
		2^{-q}
		\sum_{q-q'\leq 5}
		\langle
			\Dd_{q'} Q_2 \Sd_{q'+2}\delta \Omega,\, \Dd_q \Delta \delta Q
		\rangle_{L^2_x}\\
	&\lesssim
		2^{-q}
		\sum_{q-q'\leq 5}
		\|	\Dd_{q'} Q_2			\|_{L^\infty_x	}
		\|	\Sd_{q'+2}\delta \Omega	\|_{L^2_x		}
		\|	\Dd_q \Delta \delta Q	\|_{L^2_x		}.
\end{split}
\end{equation*}
Observing that 
$\|	\Dd_{q'} Q_2			\|_{L^\infty_x	}	\lesssim 2^{q'}\|	\Dd_{q'}	 Q_2			\|_{L^2_x	}	\lesssim 2^{-q'}\|	\Dd_{q'}\Delta Q_2			\|_{L^2_x	}$ and 
$\|	\Dd_q \Delta \delta Q	\|_{L^2_x		}\lesssim 2^q\|	\Dd_q \nabla \delta Q	\|_{L^2_x		}$, it turns out that 
\begin{equation*}
\begin{split}
	\I_q^4 
	&\lesssim
		2^{-q}
		\sum_{q-q'\leq 5}
		2^{-q'}
		\|	\Dd_{q'}\Delta Q_2			\|_{L^2_x	}
		\|	\Sd_{q'+2}\delta \Omega		\|_{L^2_x		}
		2^{q}
		\|	\Dd_q   \nabla \delta Q		\|_{L^2_x		}\\
		&\lesssim
		\sum_{q-q'\leq 5}
		2^{\frac{q-q'}{2}}
		\|	\Dd_{q'}\Delta Q_2			\|_{	L^2_x				}
		2^{-\frac{q'+2}{2}}
		\|	\Sd_{q'+2}\delta \Omega		\|_{	L^2_x				}
		2^{-\frac{q}{2}}
		\|	\Dd_q   \nabla \delta Q		\|_{	L^2_x				}\\
		&\lesssim
		\|	\Delta Q_2					\|_{	L^2_x				}
		\|	\nabla \delta Q				\|_{	\Hh^{-\frac{1}{2} } }
		\sum_{q-q'\leq 5}
		2^{\frac{q-q'}{2}}
		2^{-\frac{q'+2}{2}}
		\|	\Sd_{q'+2}\delta \Omega		\|_{	L^2_x				}.		
\end{split}
\end{equation*}
Then, by convolution, the Young inequality and Proposition \ref{prop_besov_s_negative}, we finally obtain
\begin{equation}\label{uniqueness_est_J_4}
\begin{aligned}
	L\sum_{q\in \ZZ}2^{-q}\langle \J_q^4 ( \delta \Omega,\, Q_2)	, \Dd_q \Delta \delta Q\rangle_{L^2_x} 
	&\lesssim
	\|	\Delta Q_2					\|_{	L^2_x				}
	\|	\nabla \delta Q				\|_{	\Hh^{-\frac{1}{2} } }
	\|	\nabla \delta u				\|_{	\Hh^{-\frac{1}{2} } }\\
	&\lesssim
	\|	\Delta Q_2					\|_{	L^2_x				}^2
	\|	\nabla \delta Q				\|_{	\Hh^{-\frac{1}{2} } }^2 +
	C_{\nu}
	\|	\nabla \delta u				\|_{	\Hh^{-\frac{1}{2} } }^2
\end{aligned}
\end{equation}
Summarizing \eqref{uniqueness_est_J_1}, \eqref{uniqueness_est_J_2} and \eqref{uniqueness_est_J_4} and recalling the definition of $J^3_q(\delta \Omega, Q_2)$,  we finally get
\begin{equation*}
\begin{aligned}
	 L\langle	\delta \Omega Q_2 ,\Delta\delta Q	\rangle_{\dot{H}^{-\frac{1}{2}}} 
	-\sum_{q\in\ZZ}2^{-q}&\langle \Sd_{q-1} \delta \Omega\,\Dd_q Q_2, \Dd_q \Delta \delta Q\rangle_{L^2_x}
	\lesssim\\
	&\lesssim
	\tilde{\chi}_1\,\Phi + 
	C_{\nu}
	\|	\nabla	\delta	u		\|_{	\Hh^{-\frac{1}{2}		} }^2 +
	C_{\Gamma,L}
	\|	\Delta	\delta	Q		\|_{	\Hh^{-\frac{1}{2} } }^2,
\end{aligned}
\end{equation*}
where $\tilde{\chi}_1$ belongs to $L^1_{loc}(\RR_+)$. Hence, we need to analyze 
\begin{equation*}
	L\sum_{q\in\ZZ}2^{-q}\langle \Sd_{q-1} \delta \Omega\,\Dd_q Q_2, \Dd_q \Delta \delta Q\rangle_{L^2_x}
\end{equation*}
and this term is going to disappear by a simplification.

\vspace{0.1cm}
\noindent Now we handle the term $\langle Q_2 \delta\Omega, \Delta \delta Q\rangle_{\Hh^{-\frac{1}{2}}}$ of \eqref{uniqueness_challenging_term}. Observing that it is equal to 
$\langle  \tr(Q_2 \delta\Omega ), \tr \Delta \delta Q\rangle_{\Hh^{-\frac{1}{2}}}$, that is $-\langle   \delta\Omega Q_2, \Delta \delta Q \rangle_{\Hh^{-\frac{1}{2}}}$ then we proceed 
exactly as before, obtaining
\begin{equation}\label{uniqueness_first_estimate}
\begin{aligned}
	 L\langle	\delta \Omega Q_2 - Q_2\delta \Omega ,\Delta\delta Q	\rangle_{\dot{H}^{-\frac{1}{2}}} 
	-\sum_{q\in\ZZ}2^{-q}\langle \Sd_{q-1} \delta \Omega\,&\Dd_q Q_2 - \Dd_q Q_2\,\Sd_{q-1} \delta \Omega, \Dd_q \Delta \delta Q\rangle_{L^2_x}
	\lesssim\\
	&\lesssim
	\tilde{\chi}\,\Phi + 
	C_{\nu}
	\|	\nabla	\delta	u		\|_{	\Hh^{-\frac{1}{2}		} }^2 +
	C_{\Gamma,L}
	\|	\Delta	\delta	Q		\|_{	\Hh^{-\frac{1}{2} } }^2,
\end{aligned}
\end{equation}
so that it remains to control
\begin{equation}\label{uniqueness_first_term_to_control}
	L\sum_{q\in\ZZ}2^{-q}\langle \Sd_{q-1} \delta \Omega\,\Dd_q Q_2 - \Dd_q Q_2\,\Sd_{q-1} \delta \Omega, \Dd_q \Delta \delta Q\rangle_{L^2_x}.
\end{equation}

\vspace{0.1cm}
\noindent Now, we focus on $L\langle		Q_2 \Delta \delta Q	,	\nabla	\delta	u	\rangle_{\dot{H}^{-\frac{1}{2}}}$ of \eqref{uniqueness_challenging_term} and we use again decomposition \eqref{simmmetric_decomposition} as follows 
\begin{equation*}
	L\langle		Q_2 \Delta \delta Q	,	\nabla	\delta	u	\rangle_{	\dot{H}^{-\frac{1}{2} }	}
	=
	L\sum_{q\in\ZZ}2^{-q}
	\langle	\Dd_q(	Q_2 \Delta \delta Q	),	\Dd_q \nabla	\delta	u	\rangle_{	L^2_x		}
	=
	L\sum_{q\in\ZZ}
	\sum_{i=1}^4
	2^{-q}
	\langle	\J_q^i(	Q_2,\, \Delta \delta Q	),	\Dd_q \nabla	\delta	u	\rangle_{	L^2_x	}.
\end{equation*}
As before, we estimate the terms related to $i=1,2,4$ while when $i=3$ the associated term is going to be erased. When $i=1$ we get
\begin{equation*}
\begin{split}
	L2^{-q}\langle	\J_q^1(	Q_2,\, \Delta \delta Q	),	\Dd_q \nabla	&\delta	u	\rangle_{	L^2_x	} 
	=L\sum_{|q-q'|\leq 5} 2^{-q}
	\langle
		[\Dd_q,\, \Sd_{q'-1}Q_2 ]\Dd_{q'}\Delta \delta Q, \Dd_q \nabla \delta u
	\rangle_{L^2_x}\\
	&\lesssim
	\sum_{|q-q'|\leq 5}
	2^{-q}
	\|	[\Dd_q,\, \Sd_{q'-1}Q_2 ]\Dd_{q'}\Delta \delta Q	\|_{L^2_x	}
	\|	\Dd_q \nabla \delta u								\|_{L^2_x	}\\
	&\lesssim
	\sum_{|q-q'|\leq 5}
	2^{-2q}
	\|	\Sd_{q'-1}\nabla Q_2								\|_{L^4_x	}
	\|	\Dd_{q'}\Delta \delta Q								\|_{L^4_x	}
	\|	\Dd_q \nabla \delta u								\|_{L^2_x	}\\
	&\lesssim
	\sum_{|q-q'|\leq 5}
	2^{-q}
	\|	\Sd_{q'-1}\nabla 	Q_2								\|_{L^2_x	}^\frac{1}{2}
	\|	\Sd_{q'-1}\Delta 	Q_2								\|_{L^2_x	}^\frac{1}{2}
	\|	\Dd_{q'}\nabla	\delta Q							\|_{L^4_x	}
	\|	\Dd_q \nabla \delta u								\|_{L^2_x	}\\
	&\lesssim
	\sum_{|q-q'|\leq 5}
	\|				\nabla 	Q_2								\|_{L^2_x	}^\frac{1}{2}
	\|				\Delta 	Q_2								\|_{L^2_x	}^\frac{1}{2}
	\|	\Dd_{q'}\nabla	\delta Q							\|_{L^2_x	}
	2^{-\frac{q}{2}}
	\|	\Dd_q \nabla \delta u								\|_{L^2_x	}
\end{split}
\end{equation*}
Hence, taking the sum as $q\in\ZZ$,
\begin{equation*}
\begin{split}
	L\sum_{q\in\ZZ}2^{-q}\langle	\J_q^1(	Q_2,\, \Delta \delta Q	),	&\Dd_q \nabla	\delta	u	\rangle_{	L^2_x	}
	\lesssim
	\|				\nabla 	Q_2		\|_{	L^2_x					  }^\frac{1}{2}
	\|				\Delta 	Q_2		\|_{	L^2_x					  }^\frac{1}{2}
	\|	\nabla	\delta	Q			\|_{	L^2_x					  }
	\|	\nabla	\delta	u			\|_{	\Hh^{	-\frac{1}{2}	} }\\
	&\lesssim
	\|				\nabla 	Q_2		\|_{	L^2_x					  }^\frac{1}{2}
	\|				\Delta 	Q_2		\|_{	L^2_x					  }^\frac{1}{2}
	\|	\nabla	\delta	Q			\|_{	\Hh^{	-\frac{1}{2}	} }^\frac{1}{2}
	\|	\Delta	\delta	Q			\|_{	\Hh^{	-\frac{1}{2}	} }^\frac{1}{2}
	\|	\nabla	\delta	u			\|_{	\Hh^{	-\frac{1}{2}	} }\\
	&\lesssim
	\|	\nabla		Q_2				\|_{	L^2_x					  }^2	
	\|	\Delta		Q_2				\|_{	L^2_x					  }^2		
	\|	\nabla	\delta	Q			\|_{	\Hh^{-\frac{1}{2}		} }^2+
	C_{\nu}
	\|	\nabla	\delta	u			\|_{	\Hh^{	-\frac{1}{2}	} }^2+
	C_{\Gamma,L}
	\|	\Delta	\delta	Q			\|_{	\Hh^{	-\frac{1}{2}	} }^2.
\end{split}
\end{equation*}

\vspace{0.1cm}
\noindent
We evaluate the term related to $i=2$ as follows:
\begin{equation*}
\begin{split}
	L2^{-q}\langle	\J_q^2(	Q_2,\, \Delta \delta Q	),	&\Dd_q \nabla	\delta	u	\rangle_{	L^2_x	} 
	= L
	2^{-q}
	\langle
		(\Sd_{q'-1}Q_2 - \Sd_{q-1}Q_2)\Dd_q \Dd_{q'}\Delta \delta Q, \Dd_q \nabla \delta u
	\rangle_{L^2_x}\\
	&\lesssim
	\sum_{|q-q'|\leq 5}
	2^{-q}
	\|	(\Sd_{q'-1}Q_2 - \Sd_{q-1}Q_2)					\|_{L^\infty_x}
	\|	\Dd_q \Dd_{q'}\Delta Q							\|_{L^2_x}
	\|	\Dd_q \nabla \delta u							\|_{L^2_x},
\end{split}
\end{equation*}
so that
\begin{equation*}
\begin{split}
	L2^{-q}\langle	\J_q^2(	Q_2,\, \Delta \delta Q	),	&\Dd_q \nabla	\delta	u	\rangle_{	L^2_x	} \\
	&\lesssim
	\sum_{|q-q'|\leq 5}
	2^{-2q}
	\|	(\Sd_{q'-1}\Delta Q_2 - \Sd_{q-1}\Delta Q_2)	\|_{L^2_x}
	\|	\Dd_q \Dd_{q'}\Delta \delta Q					\|_{L^2_x}
	\|	\Dd_q \nabla \delta u							\|_{L^2_x}\\
	&\lesssim
	\sum_{|q-q'|\leq 5}
	2^{-2q}
	\|	\Sd_{q'-1}\Delta Q_2							\|_{L^2_x}
	\|	\Dd_{q}\Delta\delta Q							\|_{L^2_x}
	\|	\Dd_q \nabla \delta u							\|_{L^2_x}\\
	&\lesssim
	\sum_{|q-q'|\leq 5}
	2^{-\frac{q}{2}}
	\|	\Dd_{q'}		\delta u						\|_{L^2_x}
	2^{-\frac{q}{2}}
	\|	\Dd_q \Delta 	\delta Q						\|_{L^2_x}
	\|	\Delta Q_2										\|_{L^2_x}
\end{split}	
\end{equation*}
Thus, taking the sum in $q$, it turns out that
\begin{equation*}
	L\sum_{q\in\ZZ}2^{-q}\langle	\J_q^2(	Q_2,\, \Delta \delta Q	),	\Dd_q \nabla	\delta	u	\rangle_{	L^2_x	} 
	\lesssim
	\|		\Delta			Q_2			\|_{	L^2_x				}^2
	\|				\delta	u			\|_{	\Hh^{-\frac{1}{2} } }^2+
	C_{\Gamma,L}
	\|		\Delta	\delta	Q			\|_{	\Hh^{-\frac{1}{2} } }^2
\end{equation*}
At last, when $i=4$, 
\begin{equation*}
\begin{split}
	L2^{-q}\langle	\J_q^4(	Q_2,\, \Delta \delta Q	),	&\Dd_q \nabla	\delta	u	\rangle_{	L^2_x	} 
	= L
		2^{-q}
		\sum_{q-q'\leq 5}
		\langle
			\Dd_{q'} Q_2 \Sd_{q'+2}\Delta \delta Q,\, \Dd_q \nabla \delta u
		\rangle_{L^2_x}\\
	&\lesssim
		2^{-q}
		\sum_{q-q'\leq 5}
		\|	\Dd_{q'} Q_2				\|_{L^\infty_x	}
		\|	\Sd_{q'+2}\Delta \delta Q	\|_{L^2_x		}
		\|	\Dd_q \nabla \delta u		\|_{L^2_x		}\\
	&\lesssim
		2^{-q}
		\sum_{q-q'\leq 5}
		2^{-q'}
		\|	\Dd_{q'}\Delta Q_2			\|_{L^2_x	}
		\|	\Sd_{q'+2}\Delta \delta Q	\|_{L^2_x		}
		2^{q}
		\|	\Dd_q   \delta u			\|_{L^2_x		}\\
		&\lesssim
		\sum_{q-q'\leq 5}
		2^{\frac{q-q'}{2}}
		\|	\Dd_{q'}\Delta Q_2			\|_{	L^2_x				}
		2^{-\frac{q'+2}{2}}
		\|	\Sd_{q'+2}\Delta \delta Q		\|_{	L^2_x				}
		2^{-\frac{q}{2}}
		\|	\Dd_q    \delta u			\|_{	L^2_x				}\\
		&\lesssim
		\|	\Delta Q_2					\|_{	L^2_x				}
		\|		 \delta u				\|_{	\Hh^{-\frac{1}{2} } }
		\sum_{q-q'\leq 5}
		2^{\frac{q-q'}{2}}
		2^{-\frac{q'+2}{2}}
		\|	\Sd_{q'+2}\Delta \delta Q		\|_{	L^2_x				}		
\end{split}
\end{equation*}
Hence, by convolution, the Young inequalities and Proposition \ref{prop_besov_s_negative}, we obtain
\begin{align*}
	\sum_{q\in \ZZ}2^{-q}\langle	\J_q^4(	Q_2,\, \Delta \delta Q	),	\Dd_q \nabla	\delta	u	\rangle_{	L^2_x	} 
	&\lesssim
	\|	\Delta Q_2					\|_{	L^2_x				}
	\|	 \delta u					\|_{	\Hh^{-\frac{1}{2} } }
	\|	\Delta \delta Q				\|_{	\Hh^{-\frac{1}{2} } }\\
	&\lesssim
	\|	\Delta Q_2					\|_{	L^2_x				}^2
	\|	 \delta u					\|_{	\Hh^{-\frac{1}{2} } }^2+
	C_{\Gamma,L}
	\|	\Delta \delta Q				\|_{	\Hh^{-\frac{1}{2} } }^2
\end{align*}

\vspace{0.1cm}
\noindent
Since 
$\langle \Delta \delta Q Q_2, \nabla \delta u\rangle_{\Hh^{_\frac{1}{2}}} 
 = \langle \tr(\Delta \delta Q Q_2), \tr \nabla \delta u\rangle_{\Hh^{_\frac{1}{2}}} 
 = \langle Q_2 \Delta \delta Q , \tr \nabla \delta u\rangle_{\Hh^{_\frac{1}{2}}}$,  then we proceed as for estimate $\langle  Q_2\Delta \delta Q, \nabla \delta u\rangle_{\Hh^{_\frac{1}{2}}}$, so that we obtain the following control
\begin{equation}\label{uniqueness_second_estimate}
\begin{aligned}
	L		\langle		Q_2 \Delta \delta Q	-		 
						\Delta \delta Q  Q_2 		 ,	\nabla	\delta	u	\rangle_{\dot{H}^{-\frac{1}{2}}} - 
	L\sum_{q\in\ZZ}2^{-q}\langle \Sd_{q-1} Q_2\,\Dd_q \Delta \delta Q - \Dd_q \Delta \delta Q\,\Sd_{q-1} Q_2 \delta \Omega, \Dd_q \nabla u	\rangle_{L^2_x}
	\lesssim\\
	\lesssim
	\tilde{\chi}_2\,\Phi + 
	C_{\nu}
	\|	\nabla	\delta	u		\|_{	\Hh^{-\frac{1}{2}		} }^2 +
	C_{\Gamma,L}
	\|	\Delta	\delta	Q		\|_{	\Hh^{-\frac{1}{2} } }^2
\end{aligned}
\end{equation}
where $\chi_2$ belongs to $L^1_{loc}(\RR_+)$. Now, the term we need to erase is
\begin{equation}\label{uniqueness_second_term_to_control}
	L\sum_{q\in\ZZ}2^{-q}\langle \Sd_{q-1} Q_2\,\Dd_q \Delta \delta Q - \Dd_q \Delta \delta Q\,\Sd_{q-1} Q_2 \delta \Omega, \Dd_q \nabla u	\rangle_{L^2_x}.
\end{equation} 
Thus, summing  \eqref{uniqueness_first_term_to_control} and \eqref{uniqueness_second_term_to_control}, we obtain
\begin{equation*}
	L \sum_{q\in \ZZ}2^{-q}
	\Big\{
	\langle
		\Sd_{q-1} Q_2 \Dd_q \delta \Omega - \Dd_q\delta \Omega \,\Sd_{q-1} Q_2 , \Delta \Dd_q \delta Q
	\rangle_{L^2_x} 
	+
	\langle		\Sd_{q-1}Q_2  \Delta \Dd_q \delta Q	-		 
						\Delta \Dd_q \delta Q \,\Sd_{q-1} Q_2 		 ,	\nabla	\delta	u	
	\rangle_{L^2_x}
	\Big\},
\end{equation*}
which is a series with every coefficients null, thanks to Lemma \ref{apx_lemma_omega_Q_u}. In virtue of this last result, recalling \eqref{uniqueness_first_estimate} and 
\eqref{uniqueness_second_estimate}, we finally obtain
\begin{equation*}
	L\langle		\delta \Omega Q_2 - Q_2 \delta \Omega      ,\Delta\delta Q	\rangle_{\dot{H}^{-\frac{1}{2}}} +
	L		\langle		Q_2 \Delta \delta Q	-		 
						\Delta \delta Q  Q_2 		 ,	\nabla	\delta	u	\rangle_{\dot{H}^{-\frac{1}{2}}}
	\lesssim
	\tilde{\chi} \Phi + 
	C_{\nu}
	\|	\nabla	\delta	u		\|_{	\Hh^{-\frac{1}{2}		} }^2 +
	C_{\Gamma,L}
	\|	\Delta	\delta	Q		\|_{	\Hh^{-\frac{1}{2} } }^2.
\end{equation*}

\subsection*{Conclusion} Recalling \eqref{uniqueness_energy_equality} and summarizing all the estimate of the previous two sub-sections, we conclude that there exists a function $\chi$ which 
belongs to $L^1_{loc}(\RR_+)$ such that
\begin{equation*}
	\frac{\dd}{\dd t}\Phi(t) + \nu \|	\nabla	\delta	u		\|_{	\Hh^{-\frac{1}{2}		} }^2 + \Gamma L^2 \|	\Delta	\delta	Q		\|_{	\Hh^{-\frac{1}{2} } }^2
	\lesssim \chi(t)\Phi(t)+ C_{\nu}
	\|	\nabla	\delta	u		\|_{	\Hh^{-\frac{1}{2}		} }^2 +
	C_{\Gamma,L}
	\|	\Delta	\delta	Q		\|_{	\Hh^{-\frac{1}{2} } }^2
\end{equation*}
for almost every $t\in \RR_+$. Thus, choosing $C_{\Gamma,L}$ and $C_{\nu}$ small enough, we absorb the last two terms in the right-hand side by the left-hand side, finally obtaining
\begin{equation*}
	\frac{d}{\dd t}\Big[\frac{1}{2}	\|			\delta u(t)	\|_{\dot{H}^{-\frac{1}{2}}}^2 +
		L			\|	\nabla	\delta Q(t)	\|_{\dot{H}^{-\frac{1}{2}}}^2\Big] 
	\lesssim 
	\chi \Big[\frac{1}{2}	\|			\delta u(t)	\|_{\dot{H}^{-\frac{1}{2}}}^2 +
		L			\|	\nabla	\delta Q(t)	\|_{\dot{H}^{-\frac{1}{2}}}^2\Big].
\end{equation*}
Since the initial datum is null and thanks to the Gronwall inequality, we deduce that $(\delta u, \nabla \delta Q)=0$ which yields 
$(\delta u, \delta Q)=0$, since $\delta Q(t)$ decades to $0$ at infinity for almost every $t$. Hence, we have finally achieved the uniqueness of the weak solution for 
system \eqref{main_system}. 

\section{Regularity Propagation}
\noindent
We now handle the propagation of low regularity, namely we prove Theorem \ref{Thm_Regularity}.

\begin{proof}[Proof of Theorem \ref{Thm_Regularity}]
Let us consider the following sequence of system:
\begin{equation}\label{Friedrichs_scheme_complete}
\tag{$\tilde{P}_n$}
\begin{cases}
	\; \partial_t Q^n + J_n  \Pp \big( J_nu^n \nabla J_n Q^n \big) - J_n \Pp \big(  J_n \Omega^n J_n Q^n \big) +\\ 
		\quad\quad\quad\quad\quad\quad\quad\quad\quad\quad\quad\quad\;
		+\,J_n \Pp\big( J_n Q^n  J_n \Omega^n \big) - \Gamma L \Delta J_n Q^n =P^n(Q^n)									&\RR_+ \times \RR^2,\\
	\;	\partial_t u^n + J_n\Pp \big( J_n u^n \nabla J_n u^n \big ) - \nu \Delta J_n u^n = \\
		\quad\quad\quad\quad
		= \Gamma L\Div J_n \Pp \{ J_n Q^n \Delta J_n Q^n - \Delta J_n Q^n J_n Q^n - \nabla J_n Q^n \odot \nabla J_n Q^n\}	&\RR_+ \times \RR^2,\\
	\;	\Div\, u^n = 0																									&\RR_+ \times \RR^2,\\
	\;	(u^n,\,Q^n)_{|t=0} = (u_0,\,Q_0)																				&\quad\quad\;\;\RR^2,
\end{cases}
\end{equation}
where
\begin{equation*}
	P^n(Q^n):=    -a J_n Q^n + b \big[ J_n (J_n Q^n J_n Q^n ) - \trc\{J_n( J_nQ^n J_nQ^n)\}\frac{\Id}{3}\big] - cJ_nQ^n \trc\{J_n( J_nQ^n J_nQ^n)\}.
\end{equation*}
Moreover we recall that $J_n$ is the regularizing operator defined by
\begin{equation*}
	\hat{J_n f}(\xi) = 1_{[\frac{1}{n},\,n]}(\xi)\hat{f}(\xi)
\end{equation*}
and $\Pp$ stands for the Leray projector. The Friedrichs scheme related to \eqref{Friedrichs_scheme_complete} is not much different to the \eqref{system_friedrichs}-one, however here the $Q$-tensor equation has been regularized, as well. System \eqref{system_friedrichs} has been utilized in \cite{MR2864407} and the authors have proven the existence of a strong solution $(u^n,\,Q^n)$  which converges to a weak solution for \eqref{Friedrichs_scheme_complete}, as $n$ goes to $\infty$ (up to a subsequence).  Thanks to our uniqueness result, Theorem \ref{Thm_Uniqueness}, we deduce that such solution is exactly the one determined by Theorem \ref{Thm_Existence} and it is unique. Hence, instead of proceeding by a priori estimate (as in \cite{MR2864407}), we formalize our proof, evaluating directly the \eqref{Friedrichs_scheme_complete}-scheme. We will establish some estimates, which are uniformly in $n$, which yields that the weak-solution of \eqref{main_system} fulfills them as well. This is only a strategy in order to formalize the a priori-estimate, while the major part of our proof releases on the inequalities we are going to proof.

\vspace{0.1cm}
\noindent
Since $(J_nu^n,\,J_nQ^n) = (u^n,\,Q^n)$ (by uniqueness), then $(u^n(t), Q^n(t))$ belongs to $H^{1+s} \times H^{2+s}$ for almost every $t\in \RR_+$ and for every $n\in\NN$. We apply $\Dd_q$ to the first and the second equations of \eqref{Friedrichs_scheme_complete}, then we apply $\langle\, \cdot\, ,\Dd_q u^n\rangle_{L^2}$ to the first one and 
$-L\langle\, \cdot\, ,\Dd_q \Delta  Q^n\rangle_{L^2}$ to the second one, obtaining the following identity:
\begin{align*}
	&\frac{\dd}{\dd t}\Big[ \|\Dd_q u^n \|_{L^2}^2 + L \| \Dd_q \nabla Q^n \|_{L^2}^2 \Big] 
	+	\nu 		\| 	\Dd_q \nabla u^n \|_{L^2}^2
	+	\Gamma L^2 	\|	\Dd_q \Delta Q^n \|_{L^2}^2
	= \\&=
		\langle \Dd_q( 	\Delta Q^n  Q^n - Q^n \Delta Q^n 	)	,  \Dd_q	\nabla	u^n		\rangle_{L^2}
	-	\langle \Dd_q(	u^n \cdot \nabla u^n	 			) 	,  \Dd_q			u^n		\rangle_{L^2}
	+	\langle \Dd_q( 	\nabla Q^n \odot \nabla Q^n			)	,  \Dd_q 	\nabla	u^n		\rangle_{L^2} +\\&\quad
	+L	\langle	\Dd_q(	u^n \cdot \nabla Q^n				)	,  \Dd_q 	\Delta 	Q^n		\rangle_{L^2}
	+L	\langle	\Dd_q(	\Omega^n Q^n -Q^n \Omega^n			)	,  \Dd_q 	\Delta 	Q^n		\rangle_{L^2}
	-L	\langle	\Dd_q			P^n(Q^n) 			  			,  \Dd_q 	\Delta 	Q^n		\rangle_{L^2}.
\end{align*}
Multiplying both left-hand and the right-hand sides by $2^{2qs}$ and taking the sum as $q\in \ZZ$ we obtain
\begin{equation}\label{sec3_eq_energy_Dq}
\begin{aligned}
	&\frac{\dd}{\dd t}\Big[ \|u^n \|_{\Hh^s}^2 + L \|\nabla Q^n \|_{\Hh^s}^2 \Big] 
	+	\nu 		\| 	\nabla u^n \|_{\Hh^s}^2
	+	\Gamma L^2 	\|	\Delta Q^n \|_{\Hh^s}^2
	= \\&=
	 L	\langle 	\Delta Q^n  Q^n - Q^n \Delta Q^n 			,   		\nabla	u^n		\rangle_{\Hh^s}
	-	\langle 	u^n \cdot \nabla u^n	 					,  					u^n		\rangle_{\Hh^s}
	+L	\langle  	\nabla Q^n \odot \nabla Q^n					,   		\nabla	u^n		\rangle_{\Hh^s} +\\&\quad
	+L	\langle		u^n \cdot \nabla Q^n						,  			\Delta 	Q^n		\rangle_{\Hh^s}
	+L	\langle		\Omega^n Q^n -Q^n \Omega^n					,   		\Delta 	Q^n		\rangle_{\Hh^s}
	-L	\langle		P^n(Q^n) 			  						,   		\Delta 	Q^n		\rangle_{\Hh^s}.
\end{aligned}
\end{equation}
The key part of our proof relies on the Osgood inequality, therefore we need to estimate all the terms of the right-hand side of \eqref{sec3_eq_energy_Dq}. First, let us proceed estimating the easier terms.

\vspace{0.2cm}
\noindent
\underline{Estimate of $\langle 	u^n \cdot \nabla u^n	 					,  					u^n		\rangle_{\Hh^s}$ }
\vspace{0.1cm}

\noindent
We begin with $\langle \Dd_q(	u^n \cdot \nabla u^n	 			) 	,  \Dd_q			u^n	\rangle_{L^2}$, with $q\in \ZZ$. Passing through the 
Bony decomposition
\begin{align*}
	\langle &\Dd_q(	u^n \cdot \nabla u^n	 													,\,  \Dd_q			u^n	\rangle_{L^2}= \\ &=
\underbrace{	
	\sum_{|q-q'|\leq 5}
	\langle \sum_{i=1}^2 \Dd_q T_{u^n_i}\partial_i u^n + \Dd_q T_{\partial_i u^n}u^n_i		 	,\,  \Dd_q			u^n	\rangle_{L^2}}_{\Aa_q} +
\underbrace{	
	\sum_{q'\geq q-5}
	\langle \sum_{i=1}^2 \Dd_q R( 				u^n_i,\,\partial_i u^n						) 	,\, \Dd_q			u^n	\rangle_{L^2}}_{\Bb_q}
\end{align*}
We handle the term $\Aa_q$ as follows:
\begin{align*}
	\Aa_q 	&\lesssim	
				\sum_{|q-q'|\leq 5}
				\Big[ 
					\| 	\Sd_{q'-1} 			u^n 	\|_{L^\infty} 
					\| 	\Dd_{q'	 } 	\nabla 	u^n 	\|_{L^2		}	+ 
					\| 	\Sd_{q'-1} 	\nabla	u^n 	\|_{L^\infty} 
					\| 	\Dd_{q'	 } 		 	u^n 	\|_{L^2		}
				\Big]
					\|	\Dd_q				u^n		\|_{L^2		}\\
			&\lesssim	
				\sum_{|q-q'|\leq 5}
				\Big[ 
					\| 	\Sd_{q'-1} 			u^n 	\|_{L^\infty} 
					\| 	\Dd_{q'	 } 	\nabla 	u^n 	\|_{L^2		}	+ 
					2^{q'}
					\| 	\Sd_{q'-1} 			u^n 	\|_{L^\infty} 
					\| 	\Dd_{q'	 } 		 	u^n 	\|_{L^2		}
				\Big]
					\|	\Dd_q				u^n		\|_{L^2		}\\
			&\lesssim	
				\sum_{|q-q'|\leq 5}
				\Big[ 
					\| 	\Sd_{q'-1} 			u^n 	\|_{L^\infty} 
					\| 	\Dd_{q'	 } 	\nabla 	u^n 	\|_{L^2		}	+
					\| 	\Sd_{q'-1} 			u^n 	\|_{L^\infty} 
					\| 	\Dd_{q'	 } 	\nabla 	u^n 	\|_{L^2		}
				\Big]
					\|	\Dd_q				u^n		\|_{L^2}\\
			&\lesssim	
					\|						u^n		\|_{L^\infty}
					\|	\Dd_q				u^n		\|_{L^2		}
				\sum_{|q-q'|\leq 5}
					\| 	\Dd_{q'	 } 	\nabla 	u^n 	\|_{L^2		},
\end{align*}
so that, multiplying by $2^{2sq}$ and taking the sum as $q\in\ZZ$,
\begin{equation}\label{sec3_est1}
	\sum_{q\in\ZZ} 
		2^{2qs} \Aa_q
	\lesssim
					\|						u^n		\|_{L^\infty}
	\sum_{q\in\ZZ} 
	\Big\{
		2^{2qs}		\|	\Dd_q				u^n		\|_{L^2		}
				\sum_{|q-q'|\leq 5}
					\| 	\Dd_{q'	 } 	\nabla 	u^n 	\|_{L^2		}
	\Big\}
	\lesssim
					\|						u^n		\|_{L^\infty}
					\|						u^n		\|_{\Hh^s	}
					\|				\nabla	u^n		\|_{\Hh^s	}.
\end{equation}
The control of $\Bb_q$ relies on convolution and the Young inequality, indeed
\begin{equation*}
	\Bb_q 	\lesssim 	
				\sum_{\substack{	q'\geq q-5\\|l|\leq 1}	}  
					\|	\Dd_{q'+l} 			u^n 	\|_{L^\infty}
					\|	\Dd_{q'	 }	\nabla 	u^n		\|_{L^2}
					\|	\Dd_{q 	 } 			u^n		\|_{L^2} 
			 \lesssim	
			  		\|	 					u^n 	\|_{L^\infty}
					\|	\Dd_{q	 } 			u^n		\|_{L^2		}
				\sum_{		 		q'\geq q-5				}  
					\|	\Dd_{q'	 }	\nabla 	u^n		\|_{L^2		},
\end{equation*}
hence
\begin{align*}
		\sum_{q\in\ZZ} 
			2^{2qs} \Bb_q
	&\lesssim
					\|	 					u^n 	\|_{L^\infty}
		\sum_{q\in\ZZ}
		\Big\{	
			2^{2qs}
					\|	\Dd_{q	 } 			u^n		\|_{L^2		}
			\sum_{		 		q'\geq q-5				}  
					\|	\Dd_{q'	 }	\nabla 	u^n		\|_{L^2		}
		\Big\}\\
	&\lesssim
					\|	 					u^n 	\|_{L^\infty}
		\sum_{q\in\ZZ}
		\Big\{	
			2^{qs}
					\|	\Dd_{q	 } 			u^n		\|_{L^2		}
			\sum_{		 		q'\geq q-5				}
			2^{(q-q')s}
			2^{   q' s} 
					\|	\Dd_{q'	 }	\nabla 	u^n		\|_{L^2		}
		\Big\}\\
	&\lesssim
					\|	 					u^n 	\|_{L^\infty}
					\|	 					u^n 	\|_{\Hh^s	}
		\sum_{q\in\ZZ}
		\Big\{	
			2^{qs}
					\|	\Dd_{q	 } 			u^n		\|_{L^2		}
			\sum_{		 		q'\in\ZZ				}
			2^{(q-q')s}1_{(-\infty,\,5)}(q-q')
			b_{q'}
		\Big\},
\end{align*}
where $(b_{q'})_\ZZ$ belongs to $l^2(\ZZ)$. Thus, we obtain
\begin{equation}\label{sec3_est2}
	\sum_{q\in\ZZ} 
			2^{2qs} \Bb_q
	\lesssim
					\|	 					u^n 	\|_{L^\infty}
					\|	 					u^n 	\|_{\Hh^s	}
					\|	 			\nabla	u^n 	\|_{\Hh^s	},
\end{equation}
thanks to the Young inequality. Finally, summarizing \eqref{sec3_est1} and \eqref{sec3_est2}, we obtain
\begin{equation}\label{sec3_major_est1}
	\langle 	u^n \cdot \nabla u^n	 					,  					u^n		\rangle_{\Hh^s}
	=
	\sum_{q\in\ZZ} 
		2^{2qs}
		\langle 	
			\Dd_q(	u^n \cdot \nabla u^n	)	,\,  \Dd_q	u^n	
		\rangle_{L^2} 
	\lesssim
					\|	 					u^n 	\|_{L^\infty}
					\|	 					u^n 	\|_{\Hh^s	}
					\|	 			\nabla	u^n 	\|_{\Hh^s	}.	
\end{equation}

\vspace{0.2cm}
\noindent
\underline{Estimate of $\langle 	u^n\cdot \nabla Q^n	 					,  					\Delta Q^n	\rangle_{\Hh^s}$ }
\vspace{0.1cm}

\noindent
Arguing exactly as for proving \eqref{sec3_major_est1}, we obtain
\begin{equation}\label{sec3_major_est3}
	\langle 	u^n\cdot \nabla Q^n	 					,  					\Delta Q^n	\rangle_{\Hh^s} =
	\sum_{q\in\ZZ} 
		2^{2qs} 
			\langle \Dd_q( u^n\cdot \nabla Q^n,\,\Dd_q \Delta Q^n\rangle_{L^2}
	\lesssim
					\| 						u^n	\|_{L^\infty}
					\|				\nabla 	Q^n	\|_{\Hh^s	}
					\|				\Delta 	Q^n	\|_{\Hh^s	}.
\end{equation} 

\vspace{0.2cm}
\noindent
\underline{Estimate of $\langle \nabla Q^n \odot \nabla Q^n  ,\, 	\nabla u^n \rangle_{\Hh^s}$ }
\vspace{0.1cm}

\noindent
We keep on our control, evaluating the term $\langle \Dd_q (\nabla Q^n \odot \nabla Q^n ),\, \Dd_q \nabla u^n \rangle_{L^2}$, with $q\in\ZZ$. The explicit integral formula of such term is the following one:
\begin{align*}
	\int_{\RR^2} 
	&
		\sum_{i,k=1}^2
			\Dd_q (\,\trc\{\partial_i Q \partial_k Q \}\,) \Dd_q \partial_k u^n_i 
	=	
	\int_{\RR^2} 
		\sum_{i,k=1}^2	\sum_{j,l=1}^3
			\Dd_q[\, \partial_i Q_{jl}^n\, \partial_k Q_{lj}^n \,] \Dd_q \partial_k u^n_i \\
	&=
\underbrace{	
	\int_{\RR^2} 
		\sum_{i,k=1}^2	\sum_{j,l=1}^3
			\Dd_q[\, \dot{T}_{\partial_i Q_{jl}^n}\partial_k Q_{lj}^n + \dot{T}_{\partial_k Q_{lj}^n}\partial_i Q_{jl}^n \,] \Dd_q \partial_k u^n_i	}_{\Cc_q} +
\underbrace{
		\int_{\RR^2} 	\sum_{i,k,j,l}
			\Dd_q \dot{R}(\partial_i Q_{jl}^n,\,\partial_k Q_{lj}^n)  \Dd_q \partial_k u^n_i													}_{\DD_q},	
\end{align*}
where we have used the Bony decomposition again. First, let us observe that
\begin{equation*}
	\Cc_q	
	\lesssim	
		\sum_{|q-q'|\leq 5}	
					\|	S_{q-1}		\nabla Q^n	\|_{L^\infty}
					\|	\Dd_{q'}	\nabla Q^n	\|_{L^2		}
					\|	\Dd_q		\nabla u^n	\|_{L^2		}
	\lesssim
					\|				\nabla Q^n	\|_{L^\infty}
					\|	\Dd_q		\nabla u^n	\|_{L^2		}
		\sum_{|q-q'|\leq 5}	
					\|	\Dd_{q'}	\nabla Q^n	\|_{L^2		},											
\end{equation*}
which yields
\begin{equation}\label{sec3_est3}
\begin{aligned}
	\sum_{q\in\ZZ}
		2^{2qs}	\Cc_q	
	&\lesssim 	
					\|				\nabla Q^n	\|_{L^\infty}
	\sum_{q\in\ZZ}
		2^{2qs}
	\Big\{
					\|	\Dd_q		\nabla u^n	\|_{L^2		}					
			\sum_{|q-q'|\leq 5}		
					\|	\Dd_{q'}	\nabla Q^n	\|_{L^2		}
	\Big\}\\
	&\lesssim
					\|				\nabla Q^n	\|_{L^\infty}
					\|				\nabla u^n	\|_{\Hh^s	}
					\|				\nabla Q^n	\|_{\Hh^s	}.
\end{aligned}
\end{equation}
Moreover, considering $\DD_q$, we get
\begin{align*}
	\DD_q 	
	&\lesssim 	
		\sum_{\substack{q'\geq q-5 \\ |l|\leq 5}}	
					\|	\Dd_{q'+l}	\nabla Q^n	\|_{L^\infty}
					\|	\Dd_{q'  }	\nabla Q^n	\|_{L^2		}
					\|	\Dd_{q	 }	\nabla u^n	\|_{L^2		}\\
	&\lesssim 	
					\|				\nabla Q^n	\|_{L^\infty}
					\|	\Dd_{q	 }	\nabla u^n	\|_{L^2		}
		\sum_{			q'\geq q-5 				}	
					\|	\Dd_{q'  }	\nabla Q^n	\|_{L^2		},				
\end{align*}
so that, proceeding as in the proof of \eqref{sec3_est2},
\begin{equation}\label{sec3_est4}
\begin{aligned}
	\sum_{q\in\ZZ}
		2^{2qs} \DD_q
	&\lesssim
					\|				\nabla Q^n	\|_{L^\infty}
	\sum_{q\in\ZZ}
	\Big\{
		2^{qs}
					\|	\Dd_{q	 }	\nabla u^n	\|_{L^2		}
		\sum_{			q'\in\ZZ 				}	
		2^{(q'-q)s}
		2^{ q'   s}
					\|	\Dd_{q'  }	\nabla Q^n	\|_{L^2		}
	\Big\}\\
	&\lesssim
					\|				\nabla Q^n	\|_{L^\infty}
					\|				\nabla u^n	\|_{\Hh^s	}
					\|				\nabla Q^n	\|_{\Hh^s	},
\end{aligned}				
\end{equation}
thanks to the Young inequality. Thus, summarizing \eqref{sec3_est3} and \eqref{sec3_est4}, we achieve
\begin{equation}\label{sec3_major_est2}
	\sum_{q\in\ZZ} 2^{2qs}\langle \Dd_q (\nabla Q^n \odot \nabla Q^n ),\, \Dd_q \nabla u^n \rangle_{L^2}
	\lesssim
					\|				\nabla Q^n	\|_{L^\infty}
					\|				\nabla u^n	\|_{\Hh^s	}
					\|				\nabla Q^n	\|_{\Hh^s	},
\end{equation}

\vspace{0.2cm}
\noindent
\underline{Estimate of $\langle \Delta Q^n Q^n -Q^n \Delta Q^n, \,  \nabla u^n\rangle_{\Hh^s}$ }
\vspace{0.2cm}

\noindent
Now, we carry out of $\langle \Delta Q^n Q^n -Q^n \Delta Q^n, \,  \nabla u^n\rangle_{\Hh^s}$. This is the first non trivial term to evaluate. We choose to use the decomposition \eqref{simmmetric_decomposition}, presented in the preliminaries, instead of the classical Bony decomposition (which we have used until now). We will remark the presence of a term inside 
such decomposition, which is hard to control. However we will see that such drawback is going to be erased. Let us begin controlling 
$ \langle \,	Q^n \Delta Q^n, 			\, 	 			\nabla 	u^n	\rangle_{\Hh^s}$:
\begin{equation*}
		\langle \,	Q^n \Delta Q^n, 			\, 	 			\nabla 	u^n	\rangle_{\Hh^s} 
	= 
	\sum_{q\in\ZZ} 2^{2qs}
		\langle		\Dd_q (Q^n \Delta Q^n),		\, 		\Dd_q 	\nabla 	u^n	\rangle_{L^2} = 
	\sum_{q\in\ZZ}\sum_{i=1}^4 2^{2qs}
		\langle 	\J_q^i (Q^n,\,\Delta Q^n),	\,		\Dd_q 	\nabla 	u^n	\rangle_{L^2}.
\end{equation*}
where $\J^i_q$ has been defined by \eqref{simmmetric_decomposition}, for $i=1,\dots, 4$.
When $i=1$, we point out that
\begin{align*}
	\langle\, 	\J_q^1 (Q^n,\,\Delta Q^n)		,	\,\Dd_q \nabla u^n		\rangle_{L^2} 
	&= \sum_{|q-q'|\leq 5}
						\langle 	[ \Dd_q,\,\Sd_{q'-1}Q^n]\Dd_{q'}\Delta Q^n,	\, \Dd_q\nabla u^n		\rangle_{L^2} \\
	&\lesssim 	\sum_{|q-q'|\leq 5}
						2^{-q'}	\|	\Sd_{q'-1}\nabla Q^n	\|_{L^\infty}		\|	\Dd_{q'} \Delta	Q^n \|_{L^2}	\| \Dd_q \nabla u	\|_{L^2}\\
	&\lesssim					\|			\nabla Q^n		\|_{L^\infty}
				\sum_{|q-q'|\leq 5}												\| 	\Dd_{q'} \nabla Q^n \|_{L^2}	\| \Dd_q \nabla u	\|_{L^2}.
\end{align*}
which yields
\begin{equation}\label{sec3_est5}
	\sum_{q\in\ZZ}2^{2qs} 	\langle 	\J_q^1 (Q^n,\,\Delta Q^n)		,	\,\Dd_q \nabla u^n		\rangle_{L^2} 
	\lesssim
	\|			\nabla Q^n									\|_{L^\infty}
	\| 			\nabla Q^n									\|_{\Hh^s	}
	\|			\nabla u^n									\|_{\Hh^s	}.
\end{equation}
On the other hand, for $i=2$, we proceed as follows:
\begin{align*}
						\langle 	\J_q^2 (Q^n,\,\Delta Q^n)		,	\,\Dd_q \nabla u^n		\rangle_{L^2} 
	&= \sum_{|q-q'|\leq 5}
						\langle 	(\Sd_{q'-1}Q^n-\Sd_{q-1}Q^n)\Dd_q\Dd_{q'}\Delta Q^n,	\, \Dd_q\nabla u^n		\rangle_{L^2} \\
	&\lesssim \sum_{|q-q'|\leq 5}	
				\|	\Sd_{q'-1}Q^n-\Sd_{q-1}Q^n 	\|_{L^\infty}
				\| 	\Dd_q\Dd_{q'}\Delta Q^n		\|_{L^2		}
				\|	\Dd_q\nabla u^n				\|_{L^2		}\\
	&\lesssim	\sum_{|q-q'|\leq 5}	
				\|	\Sd_{q'-1}	\nabla	Q^n-\Sd_{q-1}	\nabla	Q^n 	\|_{L^\infty}			
				\| 	\Dd_q	\nabla Q^n									\|_{L^2		}
				\|	\Dd_q	\nabla u^n									\|_{L^2		}\\
	&\lesssim	\|	\nabla Q^n											\|_{L^\infty}
				\| 	\Dd_q	\nabla Q^n									\|_{L^2		}
				\|	\Dd_q	\nabla u^n									\|_{L^2		},
\end{align*}
which yields
\begin{equation}\label{sec3_est6}
	\sum_{q\in\ZZ}2^{2qs}\langle 	\J_q^2 (Q^n,\,\Delta Q^n)		,	\,\Dd_q \nabla u^n		\rangle_{L^2} 
	\lesssim
	\|			\nabla Q^n									\|_{L^\infty}
	\| 			\nabla Q^n									\|_{\Hh^s	}
	\|			\nabla u^n									\|_{\Hh^s	}.
\end{equation}
The case $i=4$ is handled as follows:
\begin{align*}
			\langle 	\J_q^4 (Q^n,\,\Delta Q^n)					,\,		\Dd_q \nabla u^n	\rangle_{L^2}
	&= \sum_{q'\geq q-5}
			\langle 	\Dd_q[\,\Dd_{q'} Q^n\Sd_{q'+2} \Delta Q^n]	,\, 	\Dd_q \nabla u^n	\rangle_{L^2}\\
	&\lesssim	
	\sum_{q'\geq q-5}	
			\|	\Dd_{q'} Q^n 				\|_{L^2		}
			\|	\Sd_{q'+2} 	\Delta 	Q^n		\|_{L^\infty}
			\|	\Dd_q		\nabla	u^n		\|_{L^2		}\\
	&\lesssim	
	\sum_{q'\geq q-5}	
			\|	\Dd_{q'} 	\nabla 	Q^n		\|_{L^2		}
			\|	\Sd_{q'+2} 	\nabla 	Q^n		\|_{L^\infty}
			\|	\Dd_q 		\nabla 	u^n		\|_{L^2		}\\
	&\lesssim
			\|	\nabla 				Q^n		\|_{L^\infty}
			\|	\Dd_q 		\nabla 	u^n		\|_{L^2		}
	\sum_{q'\geq q-5}
			\|	\Dd_{q'} 	\nabla 	Q^n		\|_{L^2		}.
\end{align*}
Therefore, multiplying by $2^{2qs}$ and taking the sum as $q\in \ZZ$, 
\begin{align*}
	\sum_{q\in\ZZ}
		2^{2qs}
		\langle 	
	&		\J_q^4 (Q^n,\,\Delta Q^n),\,		\Dd_q \nabla u^n	
		\rangle_{L^2}
	\lesssim\\
	&\lesssim
			\|				\nabla Q^n		\|_{L^\infty}
	\sum_{q\in\ZZ}
	\Big(
		2^{qs}
			\|	\Dd_q 		\nabla	u^n		\|_{L^2		}
		\sum_{q'\in\ZZ}
		2^{(q-q')s}1_{(-\infty,5)}(q-q')
		2^{q's}
		\|	\Dd_{q'} 	\nabla 	Q^n	\|_{L^2		}
	\Big)\\
	&\lesssim
			\|				\nabla Q^n		\|_{L^\infty}
			\|				\nabla u^n		\|_{\Hh^s}
	\Big\{
	\sum_{q\in\ZZ}
	\Big(
		\sum_{q'\in\ZZ}
		2^{(q-q')s}1_{(-\infty,5)}(q-q')
		2^{q's}
		\|	\Dd_{q'} 	\nabla 	Q^n	\|_{L^2		}
	\Big)^2
	\Big\}^{\frac{1}{2}}, 
\end{align*}
so that, by convolution and the Young inequality
\begin{equation}\label{sec3_est7}
	\sum_{q\in\ZZ}2^{2qs}\langle 	\J_q^4 (Q^n,\,\Delta Q^n),\,		\Dd_q \nabla u^n	\rangle_{L^2}
	\lesssim
			\|				\nabla	Q^n		\|_{L^\infty}
			\|				\nabla u^n		\|_{\Hh^s}
			\|				\nabla Q^n		\|_{\Hh^s}.
\end{equation}
It remains to control the term related to $\J^3_q$, namely
\begin{equation}\label{sec3_est10}
	\sum_{q\in\ZZ} 2^{2qs} \langle \J^3(Q^n, \Delta Q^n), \Dd_q \nabla u^n\rangle_{L^2} = 	
	\sum_{q\in\ZZ} 2^{2qs} \langle \Sd_{q-1}Q^n\Dd_q\Delta Q^n, \Dd_q \nabla u^n\rangle_{L^2}
\end{equation}
As already remark in the beginning, such term presents some difficulties. For instance, fixing $q\in\ZZ$ in the sum, the more natural estimate is the following one:
\begin{equation*}
	\langle \Sd_{q-1}Q^n\Dd_q\Delta Q^n, \Dd_q \nabla u^n\rangle_{L^2} 
	\leq 
			\|  \Sd_{q-1}				Q^n	\|_{L^\infty}
			\|  \Dd_{q  }		\Delta 	Q^n \|_{L^2}
			\| 	\Dd_{q  } 		\nabla 	u^n \|_{L^2}.
\end{equation*}
The presence of the low frequencies $\Sd_{q-1}$ in the first norm doesn't permit to transport a gradient to $Q^n$, so the best expectation is the following one:
\begin{equation*}
	\sum_{q\in\ZZ} 2^{2qs} \langle \Sd_{q-1}Q^n\Dd_q\Delta Q^n, \Dd_q \nabla u^n\rangle_{L^2}
	\lesssim
			\|  						Q^n	\|_{L^\infty}
			\|  				\Delta 	Q^n \|_{\Hh^s	}
			\| 			 		\nabla 	u^n \|_{\Hh^s	}.
\end{equation*}
Of course such inequality is not useful for our purpose, i.e. an Osgood type inequality. For example there isn't a term that appears in the time derivative of the left-hand side of \eqref{sec3_eq_energy_Dq}. Even if there exists a way to overcome such challenging evaluation, we will see that \eqref{sec3_est10} is going to be erased.

\vspace{0.1cm}
\noindent
Now, let us keep on our control. We have to examine $\langle \Delta Q^n\,  Q^n ,\, \nabla u^n\rangle_{\Hh^s}$. Observing that an equivalent formulation is $\langle Q^n \Delta Q^n   ,\, \tr\nabla u^n\rangle_{\Hh^s}$ ($Q^n$ and $\Delta Q^n$ are symmetric matrices) we recompute the previous inequality (with $\tr \nabla u$ instead of $\nabla u$), so that 
\begin{equation}\label{sec_est11}
	\sum_{q\in\ZZ}\sum_{i=1,2,4} 2^{2qs}
		\langle 	\J_q^i (Q^n,\,\Delta Q^n),	\,		\Dd_q 	\tr \nabla 	u^n	\rangle_{L^2}
	\lesssim
			\|				\nabla	Q^n		\|_{L^\infty}
			\|				\nabla 	u^n		\|_{\Hh^s}
			\|				\nabla 	Q^n		\|_{\Hh^s}.
\end{equation}

\noindent
As before, $\J^3_q$ is an inflexible term, so that, recalling \eqref{sec3_est10}, we need to erase what follows:
\begin{equation}\label{control1}
\begin{aligned}
	\sum_{q\in\ZZ} 
	2^{2qs} 
	\Big\{
		\langle 		\Sd_{q-1}Q^n\Dd_q\Delta Q^n, 								&\Dd_q    	\nabla u^n	\rangle_{L^2} - 
		\langle 		\Sd_{q-1}Q^n\Dd_q\Delta Q^n,  								 \Dd_q \tr	\nabla u^n	\rangle_{L^2}
	\Big\} = \\& =
	\sum_{q\in\ZZ} 
	2^{2qs}
		\langle 		\Sd_{q-1}Q^n\Dd_q\Delta Q^n -\Dd_q\Delta Q^n\Sd_{q-1}Q^n,\,  \Dd_q    	\nabla u^n	\rangle_{L^2}
\end{aligned}
\end{equation}

\vspace{0.2cm}
\noindent
\underline{Estimate of $\langle\, \Omega^n Q^n - Q^n \Omega^n,\,\Delta Q^n \rangle_{\Hh^s}$ }
\vspace{0.2cm}

\noindent
Now, let us continue estimating $\langle\, \Omega^n Q^n - Q^n \Omega^n,\,\Delta Q^n \rangle_{\Hh^s}$. The strategy as the same organization of the previous evaluation. We begin analyzing $\langle\, Q^n \Omega^n,\,\Delta Q^n\rangle_{\Hh^s}$
\begin{equation*}
	\langle\, Q^n \Omega^n,\,\Delta Q^n\rangle_{\Hh^s} = 
	\sum_{q\in \ZZ}2^{2qs}\langle \Dd_q (Q^n \Omega^n),\, \Dd_q\Delta Q^n\rangle_{L^2} = 
	\sum_{q\in \ZZ}\sum_{i=1}^4 2^{2qs}\langle \J^i_q (Q^n,\, \Omega^n),\, \Dd_q\Delta Q^n\rangle_{L^2} 
\end{equation*}
First, considering $i=1$ and $q\in\ZZ$, we get
\begin{align*}
	\langle \J^1_q (Q^n,\, \Omega^n),\, \Dd_q\Delta Q^n\rangle_{L^2} 
	&= \sum_{|q-q'|\leq 5} 
		\langle [ \Dd_q,\,\Sd_{q'-1}Q^n]\Dd_{q'} \Omega^n,\, \Dd_q \Delta Q^n\rangle_{L^2}\\
	&\lesssim 
	\sum_{|q-q'|\leq 5} 
			2^{-q}
			\|	\Sd_{q'-1}		\nabla Q^n 	\|_{L^\infty} 
			\|	\Dd_{q'  } 		\nabla u^n 	\|_{L^2		}
			\|	\Dd_{q	 }		\Delta Q^n	\|_{L^2		}\\
	&\lesssim
			\|					\nabla Q^n 	\|_{L^\infty}
			\|	\Dd_{q   }		\nabla Q^n	\|_{L^2		}
	\sum_{|q-q'|\leq 5} 
			\|	\Dd_{q'  } 		\nabla u^n 	\|_{L^2		}. 
\end{align*}
therefore, taking the sum as $q\in\ZZ$, 
\begin{equation}\label{sec3_est8}
	\sum_{q\in\ZZ} 2^{2qs}\langle \J^1_q (Q^n,\, \Omega^n),\, \Dd_q\Delta Q^n\rangle_{L^2}
	\lesssim
			\|					\nabla Q^n	\|_{L^\infty}
			\|					\nabla Q^n	\|_{\Hh^s	}
			\|			 		\nabla u^n 	\|_{\Hh^s	}.
\end{equation}
By a similar method as for proving \eqref{sec3_est8} or \eqref{sec3_est6}, the case $i=2$ produces
\begin{equation*}
	\sum_{q\in\ZZ} 
	2^{2qs}
	\langle 	\J^2_q (Q^n,\, \Omega^n),	\, 	\Dd_q \Delta Q^n		\rangle_{L^2}
	\lesssim
	\|				\nabla Q^n	\|_{L^\infty}^2
	\|				\nabla Q^n	\|_{\Hh^s	}^2+
	\frac{\nu}{100}
	\|			 	\nabla u^n 	\|_{\Hh^s	}^2,
\end{equation*}
while, for $i=4$, we get
\begin{align*}
	\langle 	\J_q^4 (Q^n,\,\Omega^n),					\,	\Dd_q \Delta Q^n	\rangle_{L^2}
	&= \sum_{q'\geq q-5}
	\langle 	\Dd_q[\,\Dd_{q'} Q^n\Sd_{q'+2} \Omega^n],	\, 	\Dd_q \Delta Q^n	\rangle_{L^2}\\
	&\lesssim	
	\sum_{q'\geq q-5}	
			\|	\Dd_{q'  } 				Q^n 	\|_{L^2		}
			\|	\Sd_{q'+2} 				\Omega^n\|_{L^\infty}
			\|	\Dd_{q   }		\Delta	Q^n		\|_{L^2		}\\
	&\lesssim	\sum_{q'\geq q-5}	
			\|	\Dd_{q'  }		\nabla 	Q^n		\|_{L^2		}
			\|	\Sd_{q'+2} 				u^n		\|_{L^\infty}
			\|	\Dd_{q   }		\Delta 	Q^n		\|_{L^2		}\\
	&\lesssim
			\|							u^n		\|_{L^\infty}
			\|	\Dd_{q   } 		\Delta 	Q^n		\|_{L^2		}
	\sum_{q'\geq q-5}
			\|	\Dd_{q'  } 		\nabla 	Q^n		\|_{L^2		}.
\end{align*}
Thus, multiplying by $2^{2qs}$ and taking the sum as $q\in \ZZ$, we realize that
\begin{align*}
	\sum_{q\in\ZZ}2^{2qs}
	\langle 	&\J_q^4 (Q^n,\,\Omega^n),\,		\Dd_q \Delta Q^n	\rangle_{L^2}
	\lesssim\\
	&\lesssim
		\|	u^n						\|_{L^\infty}
	\sum_{q\in\ZZ}
	\Big(
		2^{qs}
		\|	\Dd_q \Delta 		Q^n	\|_{L^2		}
		\sum_{q'\in\ZZ}
		2^{(q-q')s}1_{(-\infty,5)}(q-q')
		2^{q's}
		\|	\Dd_{q'} 	\nabla 	Q^n	\|_{L^2		}
	\Big)\\
	&\lesssim
	\|	u^n						\|_{L^\infty}
	\|	\Delta Q^n				\|_{\Hh^s}
	\Big[
	\sum_{q\in\ZZ}
	\Big(
		\sum_{q'\in\ZZ}
		2^{(q-q')s}1_{(-\infty,5)}(q-q')
		2^{q's}
		\|	\Dd_{q'} 	\nabla 	Q^n	\|_{L^2		}
	\Big)^2
	\Big]^{\frac{1}{2}}, 
\end{align*}
so that, passing through the Young inequality,
\begin{equation}\label{sec3_est9}
	\sum_{q\in\ZZ}2^{2qs}\langle 	\J_q^4 (Q^n,\,\Omega^n),\,		\Dd_q \Delta Q^n	\rangle_{L^2}
	\lesssim
	\|	u^n						\|_{L^\infty}
	\|	\Delta Q^n				\|_{\Hh^s}
	\|	\nabla Q^n				\|_{\Hh^s}.
\end{equation}
As the reader has already understood, the challenging term is the one related to $\J_q^3$, that is
\begin{equation}\label{sec3_est11}
	\sum_{q\in\ZZ}2^{2qs}\langle 	\J_q^3 (Q^n,\,\Omega^n),\,		\Dd_q \Delta Q^n	\rangle_{L^2} = 
	\sum_{q\in\ZZ}2^{2qs}\langle 	\Sd_{q-1}Q^n,\Dd_q\Omega^n,\,		\Dd_q \Delta Q^n	\rangle_{L^2} 
\end{equation}
As \eqref{control1}, we are not capable to control it, so we claim that such obstacle is going to be simplified.

\noindent
Going on, we observe that $\langle\, \Omega^n Q^n,\, \Delta Q^n\rangle_{\Hh^s}$ can be reformulated as $\langle\, Q^n \Omega^n ,\, \Delta Q^n\rangle_{\Hh^s}$, which we have just analyzed. Hence we need to control \eqref{sec3_est11} twice, that is
\begin{equation}\label{control2}
	\sum_{q\in\ZZ} 2^{2qs} 2\langle S_{q-1} Q^n \Dd_q \Omega_n , \Dd_q \Delta Q^n\rangle_{L^2} = 
	\sum_{q\in\ZZ} 2^{2qs} \langle S_{q-1} Q^n \Dd_q \Omega_n - \Dd_q \Omega_n S_{q-1} Q^n, \Dd_q \Delta Q^n\rangle_{L^2},  
\end{equation}

\vspace{0.2cm}
\noindent
\underline{The Simplification}
\vspace{0.2cm}

\noindent
Recalling \eqref{control1} and \eqref{control2}, we have not evaluated
\begin{align*}
	\sum_{q\in\ZZ} 2^{2qs}
	\big\{\,
		\langle \Sd_{q-1}Q^n\Dd_q\Delta Q^n -&\Dd_q\Delta Q^n\Sd_{q-1}Q^n,\, \Dd_q    	\nabla u^n\rangle_{L^2} + \\&+ 
		\langle S_{q-1} Q^n \Dd_q \Omega_n - \Dd_q \Omega_n S_{q-1} Q^n, \Dd_q \Delta Q^n\rangle_{L^2}\,
	\big\},
\end{align*}
yet. However, this is a series whose coefficients are null, thanks to Theorem \ref{apx_lemma_omega_Q_u}. Hence, we have overcome all the previous lacks, so that the following inequality is fulfilled:
\begin{equation}\label{sec3_major_est4}
\begin{aligned}
	\langle \Delta Q^n Q^n -Q^n \Delta Q^n, \,  \nabla u^n\rangle_{\Hh^s} - &\langle\, \Omega^n Q^n - Q^n \Omega^n,\,\Delta Q^n \rangle_{\Hh^s} 
	 \lesssim\\
	&\lesssim 
	\|	(		u^n,\,	\nabla Q^n	)	\|_{L^\infty}
	\|	(\nabla u^n,\, 	\Delta Q^n	)	\|_{\Hh^s}
	\|	(		u^n,\,	\nabla Q^n	)	\|_{\Hh^s}.
\end{aligned}
\end{equation}

\vspace{0.3cm}
\noindent
\underline{Estimate of $\langle \Pp(Q^n),\, \Delta Q^n\rangle_{\Hh^s}$ }
\vspace{0.1cm}

\noindent
Finally, the last term to estimate is $\langle \Pp(Q^n),\, \Delta Q^n\rangle_{\Hh^s}$. Such evaluation is not a problematic, however it is computationally demanding, therefore we put forward in the appendix the proof of the following inequality:
\begin{equation}\label{estimate_P(Q)_DeltaQ}
	\langle\, \Pp(Q^n),\, \Delta Q^n \rangle_{\Hh^s} \lesssim (1+ \|Q\|_{H^2} + \|Q\|_{H^2}^2 )\|\nabla Q\|_{\Hh^s}^2,
\end{equation}
where we remind that $H^2$ is a non-homogeneous Sobolev Space.

\vspace{0.2cm}
\noindent
\underline{The Final Step}
\vspace{0.2cm}

\noindent
Summarizing the equality \eqref{sec3_eq_energy_Dq} and the inequalities \eqref{sec3_major_est1},   \eqref{sec3_major_est3}, \eqref{sec3_major_est2}, \eqref{sec3_major_est4} and \eqref{estimate_P(Q)_DeltaQ}, we deduce
\begin{equation}\label{sec3_equ1}
\begin{aligned}
	&\frac{\dd}{\dd t}\Big[ \|u^n \|_{\Hh^s}^2 + L \|\nabla Q^n \|_{\Hh^s}^2 \Big] 
	+	\nu 		\| 	\nabla u^n \|_{\Hh^s}^2
	+	\Gamma L^2 	\|	\Delta Q^n \|_{\Hh^s}^2
	\lesssim\\
	&\lesssim
		\|	(u^n,\,\nabla Q^n)	\|_{L^\infty	}
		\|	(u^n,\,\nabla Q^n)	\|_{\Hh^{s} 	}
		\|	(u^n,\,\nabla Q^n)	\|_{\Hh^{1+s} 	} + 
		(1+ \|Q^n\|_{H^2} + \|Q^n\|_{H^2}^2 )\|\nabla Q^n\|_{\Hh^s}^2.
\end{aligned}
\end{equation}
For $t\geq 0$, we define the following time-functions 
\begin{equation*}
	\Phi(t) := \|u^n \|_{\Hh^s}^2 + \|\nabla Q^n \|_{\Hh^s}^2 ,\quad\quad \Psi(t) := \| 	\nabla u^n \|_{\Hh^s}^2+	\|	\Delta Q^n \|_{\Hh^s}^2,
\end{equation*}
so that \eqref{sec3_equ1} yields
\begin{align*}
	\Phi'(t) + \Psi(t)
	\lesssim
		\|	(u^n(t),\,\nabla Q^n(t))	\|_{L^\infty	}
		\|	(u^n(t),\,\nabla Q^n(t))	\|_{\Hh^{s} 	}&
		\|	(u^n(t),\,\nabla Q^n(t))	\|_{\Hh^{1+s} 	} + \\ &+
		(1+ \|Q^n(t)\|_{H^2} + \|Q^n(t)\|_{H^2}^2 )\Phi(t).
\end{align*}
Then, fixing a positive integer $N=N(t)$, we apply Lemma \ref{Lemma_pre_Osgood}, obtaining
\begin{equation}\label{sec3_est12}
\begin{aligned}
	\Phi' + \Psi
	\lesssim
		\Big\{
			\|	(u^n,\,\nabla Q^n)	\|_{L^2			} +
	&		\sqrt{N}
			\|	(u^n,\,\nabla Q^n)	\|_{\Hh^1		} + 
			2^{-Ns}
			\|	(u^n,\,\nabla Q^n)	\|_{\Hh^{s+1}	}
		\Big\}
		{\scriptstyle \times}\\
	&	{\scriptstyle \times}
		\|	(u^n,\,\nabla Q^n)	\|_{\Hh^{s} 	}
		\|	(u^n,\,\nabla Q^n)	\|_{\Hh^{1+s} 	} + 
		(1+ \|Q^n\|_{H^2} + \|Q^n\|_{H^2}^2 )\Phi.
\end{aligned}
\end{equation}
For simplicity, let us define
\begin{equation*}
	f_1 := \|	(u^n,\,\nabla Q^n)	\|_{L^2	}^2 + 1+ \|Q^n\|_{H^2} + \|Q^n\|_{H^2}^2 , \quad
	f_2 := \|	(u^n,\,\nabla Q^n)	\|_{\Hh^1}^2,
\end{equation*}
hence \eqref{sec3_est12} implies
\begin{equation}\label{sec3_est13}
	\Phi'(t) + \Psi(t) \leq C\big\{ f_1(t)\,\Phi(t) + N f_2(t) \Phi(t) +  2^{-Ns}\|	(u^n,\,\nabla Q^n)(t)	\|_{\Hh^{s} 	} \Psi(t) \big\},
\end{equation}
for a positive constant $C$. Now, choosing $N(t)$ to be a positive integer which fulfills
\begin{equation*}
	\frac{1}{s}\log_2 \{ 2 + 4C + \Phi(t) \} \leq N(t) \leq  \frac{1}{s}\log_2 \{ 2 + 4C + \Phi(t) \}+1
\end{equation*}
it turns out from \eqref{sec3_est13}
\begin{equation*}
	\Phi'(t) + \Psi(t) \leq C\big\{ f_1(t)\,\Phi(t) +  f_2(t) \Phi(t) (\frac{1}{s}\log_2 \{ 2 + 4C + \Phi(t) \}+1) \big\}+  
	\frac{1}{2}\Psi(t),
\end{equation*}
so that, finally, increasing the value of $C$, we obtain
\begin{equation}\label{sec3_est14}
	\Phi'(t) + \Psi(t)
	\leq C\big( f_1(t) +  f_2(t)\big) \Phi(t) \log_2 \{ 2 + 4C + \Phi(t) \},
\end{equation}
which yields
\begin{equation*}
	\Phi'(t)\leq \frac{C}{\ln 2}\big( f_1(t) +  f_2(t)\big) (2+4C +\Phi(t)) \ln \{ 2 + 4C + \Phi(t) \}.
\end{equation*}
By integrating this differential inequality, we obtain
\begin{equation*}
	2+4C+\Phi(t)\leq (2+4C+\Phi(0))^{\exp\{ \frac{C}{\ln 2}\int_0^t ( f_1(s) +  f_2(s)  )\dd s\}}.
\end{equation*}
Recalling the definition of $\Phi$, $f_1$ and $f_2$, we obtain
\begin{equation*}
	 \|(u^n,\,\nabla Q^n)(t) \|_{\Hh^s}^2 \leq  
	 (2+4C+\|(u_0,\,\nabla Q_0) \|_{\Hh^s}^2)^{	\exp\{ \frac{C}{\ln 2}\int_0^t( \|	(u^n(s),\,\nabla Q^n(s))	\|_{L^2	}^2 + 1+ \|Q(s)\|_{H^2} + \|Q(s)\|_{H^2}^2)\dd s\}}
\end{equation*}
Moreover, integrating \eqref{sec3_est14} in time, we get
\begin{equation*}
	\int_0^t \Psi(s)\dd s \leq \Phi(0) + C\int_0^t \big( f_1(t) +  f_2(t)\big) \Phi(t) \log_2 \{ 2 + 4C + \Phi(t) \}
\end{equation*}
that is
\begin{align*}
	\int_0^t \|(u^n,\,\nabla Q^n)(\tau)\|_{\Hh^{s+1}}^2\dd \tau	&\leq 
	\|(u_0,\,\nabla Q_0) \|_{\Hh^s}^2 + 
	C\int_0^t \big\{ \|	(u^n,\,\nabla Q^n)	\|_{L^2	}^2 + 1+ \|Q^n\|_{H^2} + \\
	&+ \|Q^n\|_{H^2}^2\big\}(\tau)\dd \tau  \|(u^n,\,\nabla Q^n)(t) \|_{\Hh^s}^2
	\log_2 \{ 2 + 4C + \|(u^n,\,\nabla Q^n)(t) \|_{\Hh^s}^2\},
\end{align*}
Since such estimates are uniform in $n$, we pass to the limit as $n$ goes to $\infty$, obtaining
\begin{equation*}
	\|(u,\,\nabla Q)\|_{L^\infty_T \Hh^s} \leq 
	(2+4C+\|(u_0,\,\nabla Q_0) \|_{\Hh^s}^2)^{	\frac{1}{2}\exp\{ \frac{C}{\ln 2}\int_0^T( \|	(u^n(s),\,\nabla Q^n(s))	\|_{L^2	}^2 + 1+ \|Q(s)\|_{H^2} + \|Q(s)\|_{H^2}^2)\dd s\}},
\end{equation*}
and
\begin{align*}
	\int_0^t \|(u,\,\nabla Q)(\tau)\|_{\Hh^{s+1}}^2\dd \tau	&\leq 
	\|(u_0,\,\nabla Q_0) \|_{\Hh^s}^2 + 
	C\int_0^t \big\{ \|	(u,\,\nabla Q)	\|_{L^2	}^2 + 1+ \|Q\|_{H^2} + \\
	&+ \|Q\|_{H^2}^2\big\}(\tau)\dd \tau  \|(u,\,\nabla Q)(t) \|_{\Hh^s}^2
	\log_2 \{ 2 + 4C + \|(u,\,\nabla Q)(t) \|_{\Hh^s}^2\},
\end{align*}
where $(u,\,Q)$ is solution of \eqref{main_system} with $(u_0, \,Q_0)$ as initial data. This concludes the proof of Theorem \eqref{Thm_Regularity}.
\end{proof}

\appendix
\section{}
\subsection{Useful tools}

\begin{lemma}\label{apx_lemma_omega_Q_u}
	Let $Q_1$ and $Q_2$ be two $3\times 3$ symmetric matrices with entries in $H^2(\RR^2)$. Assume that $u$ is a $3$-vector with components in 
	$H^1(\RR^2)$ and let $\Omega$ be the $3\times 3$ matrix defined by $1/2(\nabla u - \tr \nabla u)$. Then the following identity is satisfied:
	\begin{equation*}
		\int_{\RR^2}\trc\{(\Omega Q_2 - Q_2 \Omega) \Delta Q_1 \} + \int_{\RR^2} \trc\{ ( \Delta Q_1  Q_2 - Q_1 \Delta Q_2  )\nabla u \} = 0
	\end{equation*}
\end{lemma}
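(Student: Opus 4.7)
The plan is entirely algebraic: no integration by parts is needed, and the cancellation between the two integrals occurs essentially pointwise. I would rely on three elementary facts: (i) cyclicity of trace, (ii) the decomposition $\nabla u = D + \Omega$ into its symmetric part $D := \frac{1}{2}(\nabla u + \tr \nabla u)$ and its antisymmetric part $\Omega$, and (iii) the Frobenius orthogonality identity $\trc\{SM\}=0$ whenever $S$ is symmetric and $M$ is antisymmetric (verified in one line by swapping indices).

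First I would reorganise the first integrand by cyclicity: using $\trc\{Q_2 \Omega \Delta Q_1\} = \trc\{\Omega \Delta Q_1 Q_2\}$, one obtains
\begin{equation*}
	\trc\{(\Omega Q_2 - Q_2 \Omega)\Delta Q_1\} = \trc\{\Omega\,(Q_2 \Delta Q_1 - \Delta Q_1 Q_2)\}.
\end{equation*}
The crucial observation is that the commutator $M := Q_2 \Delta Q_1 - \Delta Q_1 Q_2$ is antisymmetric, since its transpose equals $\Delta Q_1 Q_2 - Q_2 \Delta Q_1 = -M$ by the symmetry of $Q_1, Q_2$ (and hence of $\Delta Q_1$). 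Writing $\nabla u = D + \Omega$ and using (iii) to annihilate the $D$-contribution, together with one further cyclicity step, then gives
\begin{equation*}
	\trc\{\Omega\, M\} = \trc\{(\nabla u)\,M\} = \trc\{M\,\nabla u\}.
\end{equation*}

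Integrating this chain of equalities identifies the first integral of the lemma with $\int_{\RR^2}\trc\{(Q_2\Delta Q_1 - \Delta Q_1 Q_2)\,\nabla u\}$, which is the opposite of the second integral appearing in the statement; the two therefore cancel and the identity follows. No analytic obstacle is anticipated: the whole argument is finite-dimensional linear algebra, and the regularity hypotheses $Q_i \in H^2(\RR^2)$, $u \in H^1(\RR^2)$---together with the 2-D Sobolev embedding $H^1 \hookrightarrow L^p$ for every $p < \infty$---place every trace integrand comfortably in $L^1(\RR^2)$, which is more than enough to justify all the manipulations above. The only small subtlety is checking that the sign obtained at the end matches the one written in the statement, which amounts to a direct expansion of the two commutators and is straightforward once the antisymmetry of $M$ is in hand.
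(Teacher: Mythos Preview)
Your proof is correct and is essentially the same as the paper's: both are pointwise trace manipulations exploiting the symmetry of $Q_i$ and the antisymmetry of $\Omega$ to replace $\Omega$ by $\nabla u$ at no cost, with no integration by parts needed. Your organisation via the antisymmetry of the commutator $M=Q_2\Delta Q_1-\Delta Q_1 Q_2$ and Frobenius orthogonality is a touch cleaner than the paper's transpose-juggling, but the content is identical; your caution about the final sign is well placed, since the printed statement contains a typo ($Q_1\Delta Q_2$ should read $Q_2\Delta Q_1$), and your computation yields exactly the corrected identity that the paper actually uses.
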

\begin{proof}
	The proof is straightforward, indeed by a direct computation
	\begin{align*}
		\int_{\RR^2}\trc\{(\Omega Q_2 &-Q_2 \Omega) \Delta Q_1 \}	 
												= 	\int_{\RR^2}\big[	\trc\{\Omega Q_2 \Delta Q_1 \} - \trc\{ Q_2 \Omega \Delta Q_1 		\}	\big]	
												=	\int_{\RR^2}\big[	\trc\{\Omega Q_2 \Delta Q_1 \} -\\&- \trc\{ \Delta Q_1 \tr \Omega  Q_2 	\}	\big] 
												=  2\int_{\RR^2}		\trc\{\Omega Q_2 \Delta Q_1 \}													
												=	\int_{\RR^2}		\trc\{\nabla u Q_2 \Delta Q_1 - \tr \nabla u Q_2 \Delta Q_1 		\}=\\			
												 	&\quad\quad\quad\quad\quad\quad\quad\quad\quad\quad\quad\quad\quad\quad\quad\quad\quad\quad\quad\quad
												=	\int_{\RR^2} 		\trc\{ (Q_1 \Delta Q_2 - \Delta Q_1  Q_2 )\nabla u 					\}.
	\end{align*}
\end{proof}

\begin{lemma}\label{Lemma_pre_Osgood}
	Let $f$ be a function in $H^1\cap \Hh^{1+s}$ with $s>0$. Then, there exists $C>0$ such that
	\begin{equation*}
		\| f \|_{L^\infty} \leq C\big(\,\| f \|_{L^2} + \sqrt{N}\| f \|_{H^1} + 2^{-Ns}\| f \|_{\Hh^{1+s}} \big),
	\end{equation*}
	for any positive integer $N$.
\end{lemma}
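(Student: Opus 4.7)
The plan is to split $f$ into three frequency regimes via the homogeneous Littlewood--Paley decomposition and apply Bernstein's inequality in each regime. Since we work in dimension two, Bernstein gives $\|\Dd_q f\|_{L^\infty_x} \lesssim 2^q \|\Dd_q f\|_{L^2_x}$ for every $q\in\ZZ$, and we start from the estimate
\begin{equation*}
\|f\|_{L^\infty_x}\leq \sum_{q\leq 0}\|\Dd_q f\|_{L^\infty_x}+\sum_{0<q\leq N}\|\Dd_q f\|_{L^\infty_x}+\sum_{q> N}\|\Dd_q f\|_{L^\infty_x}.
\end{equation*}

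For the low frequencies $q\leq 0$, I would use Bernstein together with the trivial bound $\|\Dd_q f\|_{L^2_x}\leq \|f\|_{L^2_x}$ and sum the geometric series $\sum_{q\leq 0} 2^q$, which produces the $\|f\|_{L^2_x}$ term on the right. For the intermediate range $0<q\leq N$, I would apply Cauchy--Schwarz in $q$:
\begin{equation*}
\sum_{0<q\leq N} 2^q \|\Dd_q f\|_{L^2_x}\leq \sqrt{N}\,\Bigl(\sum_{0<q\leq N} 4^q\|\Dd_q f\|_{L^2_x}^2\Bigr)^{\!\frac{1}{2}}\lesssim \sqrt{N}\,\|f\|_{\Hh^1}\lesssim \sqrt{N}\,\|f\|_{H^1},
\end{equation*}
which generates the $\sqrt{N}\|f\|_{H^1}$ term. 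For the high frequencies $q>N$, I would factor $2^q=2^{-qs}\cdot 2^{q(1+s)}$ and apply Cauchy--Schwarz again:
\begin{equation*}
\sum_{q>N} 2^q\|\Dd_q f\|_{L^2_x}\leq \Bigl(\sum_{q>N}4^{-qs}\Bigr)^{\!\frac{1}{2}}\Bigl(\sum_{q>N} 4^{q(1+s)}\|\Dd_q f\|_{L^2_x}^2\Bigr)^{\!\frac{1}{2}}\lesssim 2^{-Ns}\|f\|_{\Hh^{1+s}},
\end{equation*}
where the first factor is summable precisely because $s>0$, and produces the prefactor $2^{-Ns}$.

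Summing the three contributions yields the desired inequality. There is no real obstacle here: the only point requiring care is to keep track of the exponent $s>0$ that makes the geometric tail $\sum_{q>N}4^{-qs}$ convergent with geometric decay of order $2^{-Ns}$, and to pick up the factor $\sqrt{N}$ in the intermediate block via Cauchy--Schwarz rather than a naive $\ell^1$ bound (which would give an unusable factor of $N$). All other steps are routine consequences of Bernstein's inequality in $\RR^2$ and the equivalence $\Hh^s\simeq \BB^s_{2,2}$ recalled in the preliminaries.
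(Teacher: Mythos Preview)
Your proposal is correct and follows essentially the same approach as the paper: a three-regime Littlewood--Paley split (low, intermediate, high frequencies), Bernstein's inequality $\|\Dd_q f\|_{L^\infty}\lesssim 2^q\|\Dd_q f\|_{L^2}$ in $\RR^2$, a geometric sum for $q\leq 0$, Cauchy--Schwarz over $0<q\leq N$ to produce the $\sqrt{N}$ factor, and the factorization $2^q=2^{-qs}2^{q(1+s)}$ followed by Cauchy--Schwarz for $q>N$ to extract $2^{-Ns}$. The paper's proof is the same argument, written slightly more compactly.
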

\begin{proof}
	Let us fix $N>0$. Then $f = \Sd_{N+1}f + (\Id - \Sd_{N+1} )f$ fulfills
	\begin{equation*}
		\| f \|_{L^\infty} \leq  \| \Sd_{N+1}f \|_{L^\infty} +  \| \sum_{q\geq N }\Dd_q f \|_{L^\infty} \leq 
	\underbrace{	
		\sum_{q <    N }	\| \Dd_q f\|_{L^\infty}}_{\Aa} + 
	\underbrace{
		\sum_{q \geq N }	\| \Dd_q f \|_{L^\infty}}_{\Bb}.
	\end{equation*}
	First, let us analyze $\Aa$:
	\begin{align*}
			\sum_{q <    N }					\| \Dd_q f\|_{L^\infty}
		&=
			\sum_{ q \leq 0 }					\| \Dd_q f\|_{L^\infty} +
			\sum_{  q = 1  }^N					\| \Dd_q f\|_{L^\infty}
		 \lesssim	
			\sum_{ q \leq 0 }	2^{ q  	   }	\| \Dd_q f\|_{L^2} +
			\sum_{  q = 1   }^N 2^{ q 	   }	\| \Dd_q f\|_{L^2}\\
		&\lesssim	
			\sum_{ q \leq 0 }					\| \Dd_q f\|_{L^2} +
			\sqrt{N}
				\|		f		\|_{	\Hh^{1}	}
		 \lesssim
				\|		f		\|_{	L^2		}	+
			\sqrt{N}
				\|		f		\|_{	\Hh^{1}	}.
	\end{align*}
	Finally, from the definition of $\Bb$
	\begin{equation*}
		\sum_{q \geq N }					\| \Dd_q f \|_{L^\infty}
		=
		\sum_{q	\geq N}			2^{ q	 }	\| \Dd_q f \|_{L^2}
		=
		\sum_{q	\geq N}	2^{-sq}	2^{q(1+s)}	\| \Dd_q f \|_{L^2}
		\lesssim
		2^{-Ns} \|	f	\|_{		\Hh^{1+s}	},
	\end{equation*}
	which concludes the proof of the lemma.
\end{proof}

\begin{proof}{proof of Theorem \ref{product_Hs_Ht}}
	At first we identify the Sobolev Spaces $\Hh^s$ and $\Hh^t$ with the Besov Spaces $\BB_{2,2}^s$ and $\BB_{2,2}^t$ respectively. 
	We claim that $ab$ belongs to $\BB_{2,2}^{s+t-N/2}$ and 
	\begin{equation*}
		\|a b\|_{\BB_{2,2}^{s+t-N/2}}\leq C\|a\|_{\BB_{2,2}^{s}}\|b\|_{\BB_{2,2}^t},
	\end{equation*}
	for a suitable positive constant. 
	
	\noindent We decompose the product $ab$ through the Bony decomposition, namely $ab = \dot{T}_{a}b + \dot{T}_{b}a + R(a,b)$, where
	\begin{equation*}
		\dot{T}_{a}b	:= \sum_{q\in\ZZ}							\Dd_q 		a\, \Sd_{q-1}	b,\quad\quad
		\dot{T}_{b}a	:= \sum_{q\in\ZZ}							\Sd_{q-1}	a\,	\Dd_q 		b,\quad\quad
		\Rd(a,b)		:= \sum_{\substack{q\in\ZZ\\|\nu|\leq 1}}	\Dd_{q}		a\,	\Dd_{q+\nu} b.
	\end{equation*}
	For any $q\in \ZZ$, we have
	\begin{align*}
		2^{q(s+t-\frac{N}{2})}
		&\| (\Dd_q \dot{T}_ab ,\, \Dd_q \dot{T}_b a) \|_{L^2}
		\lesssim\\	&\lesssim
		\sum_{|q-q'|\leq 5}		2^{q's} 				\|\Dd_q		a \|_{L^2} 		2^{q'(t-\frac{N}{2})} 	\|	\Sd_{q-1} b\|_{L^\infty} +
		\sum_{|q-q'|\leq 5}		2^{q'(s-\frac{N}{2})} 	\|\Sd_{q-1} a \|_{L^\infty}	2^{q't} 				\| 	\Dd_q b \|_{L^2},
	\end{align*}
	so that we determine the following feature
	\begin{equation*}
	 	\| (\dot{T}_{a}b,\, \dot{T}_b a) \|_{\BB_{2,2}^{s+t-\frac{N}{2}}}\leq
	 	\| (\dot{T}_{a}b,\, \dot{T}_b a) \|_{\BB_{2,1}^{s+t-\frac{N}{2}}}\lesssim
	 	\| a \|_{\BB_{2, 2}^s}						\| b \|_{\BB_{\infty, 2}^{t-\frac{N}{2}}} + 
	 	\| a \|_{\BB_{\infty, 2}^{s-\frac{N}{2}}}	\| b \|_{\BB_{2, 2}^t}\lesssim
	 	\| a \|_{\BB_{2, 2}^s}							\| b \|_{\BB_{2, 2}^t},
	\end{equation*}	
	where we have used the embedding $\BB_{2,2}^{\sigma}\hookrightarrow \BB_{\infty, 2}^{\sigma -N/2}$, for any $\sigma\in\RR$ and Proposition \ref{prop_besov_s_negative}.
	
	\vspace{0.1cm}
	\noindent In order to conclude the proof, we have to handle the rest $\Rd(a,b)$. By a direct computation, for any $q\in \ZZ$, 
	\begin{equation*}
		2^{(t+s)q} \| \Dd_q \Rd(a,b) \|_{L^1} \leq 
		\sum_{\substack{q'\geq q-5\\|\nu|\leq 1}} 2^{(q-q')(s+t)} 2^{q's} \|\Dd_{q'} a\|_{L^2} 2^{(q'+\nu)t}\|\Dd_{q'+\nu} a\|_{L^2}, 
	\end{equation*}
	so that, thanks to the Young inequality, we deduce
	\begin{equation*}
		\| \Rd(a,b) \|_{\BB_{2,2}^{s+t-\frac{N}{2}}} \lesssim
		\| \Rd(a,b) \|_{\BB_{1,1}^{s+t}} \lesssim 
		\| 	a		\|_{\BB_{2,2}^s}
		\| 	b		\|_{\BB_{2,2}^t},
	\end{equation*}
	where we have used the embedding $\BB_{1,1}^{s+t}\hookrightarrow \BB_{2,2}^{s+t-N/2}$ and moreover that $\sum_{q\leq 5} 2^{q(s+t)}$ is 
	finite, since $s+t$ is positive.	 
\end{proof}

\section{}
\subsection{Proof of estimate  \eqref{estimate_P(Q)_DeltaQ}}$ $

\noindent
The purpose of this section is to estimate the following term
\begin{equation*}
	\langle \Pp(Q^n),\, \Delta Q^n \rangle_{\Hh^s}.
\end{equation*}
In order to facilitate the reader, we are not going to indicate the index $n$, from here on. We have to examine
\begin{align*}
	\langle \Pp(Q),\, \Delta Q \rangle_{\Hh^s} 	&= \langle -aQ + b [Q^2 - \trc\{Q^2\}\frac{\Id}{3}]  - c\trc\{Q^2\}Q,\,\Delta Q\rangle_{\Hh^s}\\
												&= \langle -aQ + b  Q^2 							 - c\trc\{Q^2\}Q,\,\Delta Q\rangle_{\Hh^s},
\end{align*}
where $\langle \trc\{Q^2\}\Id, \,\Delta Q\rangle_{\Hh^s}=0$ since $\Delta Q$ has null trace.
It is trivial that 
\begin{equation}\label{appx_first_estimate}
-\langle a Q ,\,\Delta Q\rangle_{\Hh^s}\lesssim \|\nabla Q\|_{\Hh^s}.
\end{equation} 
Now, let us consider $b\langle Q^2,\, \Delta Q\rangle_{\Hh^s}$. By definition we have
\begin{align*}
	b\langle Q^2,\, \Delta Q\rangle_{\Hh^s} 
	&= b\sum_{q\in\ZZ} 2^{2qs} \langle \Dd_q[ Q^2],\,\Dd_q \Delta Q\rangle_{L^2}\\ 
	&= 
	b\sum_{q\in\ZZ} 2^{2qs}
	\big[ 
		2\underbrace{
			\langle  	\Dd_q \dot{T}_QQ 	,\, \Dd_q 	\Delta Q\rangle_{L^2}}_{\Aa_q} + 
		 \underbrace{
		  	\langle		\Dd_q \dot{R}(Q,\,Q),\, \Dd_q	\Delta Q\rangle_{L^2}}_{\Bb_q}
	\big]
\end{align*}
We concentrate on $\Aa_q$, getting
\begin{align*}
	\Aa_q  
	&\leq \sum_{|q-q'|\leq 5} \|\Sd_{q'-1} Q \Dd_{q'} Q\|_{L^2} \| \Dd_q \Delta Q\|_{L^2}
	\lesssim \|Q\|_{L^\infty}\| \Dd_q \nabla Q\|_{L^2}\sum_{|q-q'|\leq 5}\|\Dd_{q'} \nabla Q\|_{L^2},
\end{align*}
so that
\begin{equation}\label{appx_est_A_q}
	 b\sum_{q\in\ZZ} 2^{2qs}\Aa_q \lesssim \|Q\|_{L^\infty}\|  \nabla Q\|_{\Hh^s}^2.
\end{equation}
Now, analyzing $\Bb_q$, we observe that
\begin{equation*}
	\Bb_q 
	\leq 		\sum_{\substack{q'\geq q- 5\\|l|\leq 1}} \| \Dd_{q'}Q \Dd_{q'+l} Q \|_{L^2} \|\Dd_q \Delta Q\|_{L^2} 
	\lesssim	\| Q \|_{L^\infty} \|\Dd_q \nabla Q\|_{L^2} \sum_{q'\geq q- 5}2^{q-q'}\| \Dd_{q'}\nabla Q \|_{L^2},
\end{equation*}
so that
\begin{equation*}
	 b\sum_{q\in\ZZ} 2^{2qs}\Bb_q \lesssim 
	 \| Q \|_{L^\infty}b\sum_{q\in\ZZ} 2^{qs}\|\Dd_q \nabla Q\|_{L^2} \sum_{q'\in\ZZ}2^{(q-q')(s+1)}1_{(-\infty,5)}(q-q')\| \Dd_{q'}\nabla Q \|_{L^2}.
\end{equation*}
Thus, by convolution and young inequality
\begin{equation*}
	b\sum_{q\in\ZZ} 2^{2qs}\Bb_q\lesssim  \|Q\|_{L^\infty}\|  \nabla Q\|_{\Hh^s}^2,
\end{equation*}
and recalling \eqref{appx_est_A_q}, we finally get
\begin{equation}\label{appx_second_estimate}
	b\langle Q^2,\, \Delta Q\rangle_{\Hh^s} \lesssim  \|Q\|_{L^\infty}\|  \nabla Q\|_{\Hh^s}^2.
\end{equation}
Now, it remains to examine $c\langle  Q\trc \{Q^2\} ,\, \Delta Q\rangle_{\Hh^s}$. The procedure is quietly similar to the previous one. At first we use the Bony decomposition as follows:
\begin{align*}
	\langle  Q\trc \{Q^2\} ,\, \Delta Q\rangle_{\Hh^s} 
	= \sum_{q\in\ZZ} 2^{2qs} \langle \Dd_q( Q &\trc\{Q^2\}) , \, \Dd_q \Delta Q \rangle_{L^2} 
	= 	\sum_{q\in\ZZ} 2^{2qs}
	\Big[ 
		\underbrace{
			\langle \Dd_q	\dot{T}_Q (\trc\{Q^2\}\Id),		\,\Dd_q 	\Delta Q	\rangle_{L^2}}_{\Aa_q} + \\&+
		\underbrace{	
			\langle \Dd_q	\dot{T}_{\trc\{Q^2\}\Id}Q 		,\,\Dd_q 	\Delta Q	\rangle_{L^2}}_{\Bb_q} +
		\underbrace{	
			\langle	\Dd_q	\dot{R}(Q,\trc\{Q^2\}\Id)		,\,\Dd_q 	\Delta Q	\rangle_{L^2}}_{\Cc_q} 
	\Big]
\end{align*}
First, we concentrate on $\Aa_q$, the more computationally demanding term, obtaining
\begin{align*}
	\Aa_q \leq 			\sum_{[q-q'|\leq 5} \|\Sd_{q'-1}Q 	\Dd_{q'} (\trc\{Q^2\}\Id)\|_{L^2} \| \Dd_q \Delta Q		\|_{L^2} \lesssim
	\|Q \|_{L^\infty}	\sum_{[q-q'|\leq 5} \|				\Dd_{q'} (	Q^2			 )\|_{L^2} \| \Dd_q	\Delta Q	\|_{L^2}\\
	\lesssim
	\|Q \|_{L^\infty}	\sum_{[q-q'|\leq 5}
	\Big[
		\underbrace{
		2	\|				\Dd_{q'}	\dot{T}_Q Q									\|_{L^2}\| \Dd_q \Delta Q		\|_{L^2}}_{I_{q,q'}} +
		\underbrace{
			\|				\Dd_{q'}	\dot{R}(Q,Q)								\|_{L^2}\| \Dd_q \Delta Q		\|_{L^2}}_{II_{q,q'}}
	\Big]
\end{align*}
The term $I_q$ is the simpler one, indeed
\begin{equation*}
	I_{q,q'} \lesssim \sum_{|q'-q''|\leq 5} \| \Sd_{q''-1} Q \Dd_{q''} Q\|_{L^2}\| \Dd_q \Delta Q		\|_{L^2} 
	\lesssim
	\| Q \|_{L^\infty}
	\sum_{|q'-q''|\leq 5}
	\|\Dd_{q''} Q\|_{L^2}\| \Dd_q \Delta Q		\|_{L^2},
\end{equation*}
so that
\begin{align*}
	\sum_{q\in\ZZ} \|Q\|_{L^\infty}
	\sum_{[q-q'|\leq 5}I_{q,q'}
	&\lesssim
	\|Q \|_{L^\infty}^2\sum_{q\in\ZZ}	\sum_{[q-q'|\leq 5}
	\sum_{|q'-q''|\leq 5}
	\|\Dd_{q''} Q\|_{L^2}\| \Dd_q \Delta Q		\|_{L^2} \\
	&\lesssim
	\|Q \|_{L^\infty}^2\sum_{q\in\ZZ}	
	\sum_{|q-q''|\leq 10}
	\|\Dd_{q''}\nabla Q\|_{L^2}\| \Dd_q \nabla Q		\|_{L^2}\lesssim 
	\|Q \|_{L^\infty}^2\| \nabla Q\|_{\Hh^s}. 
\end{align*}
We overcome the term $II_{q,q'}$ as follows:
\begin{align*}
	II_{q,q'}\lesssim 
	 \|\Dd_q \Delta Q\|_{L^2}\sum_{\substack{q''\geq q'-5\\ |l|\leq 1}} \|\Dd_{q''}Q \|_{L^2}\|\Dd_{q''+l}Q \|_{L^\infty}\lesssim
	 \| Q \|_{L^\infty}\|\Dd_q \nabla Q\|_{L^2} \sum_{q''\geq q'-5}2^{q-q''} \|\Dd_{q''}\nabla Q \|_{L^2},
\end{align*}
so that
\begin{align*}
	\sum_{q\in\ZZ} \|Q\|_{L^\infty}
	\sum_{[q-q'|\leq 5}II_{q,q'}
	&\lesssim
	\|Q \|_{L^\infty}^2\sum_{q\in\ZZ}2^{2qs}\|\Dd_q \nabla Q\|_{L^2}\sum_{[q-q'|\leq 5}
	\sum_{q''\geq q'-5} 2^{q-q''} \|\Dd_{q''}\nabla Q \|_{L^2}\\
	&\lesssim
	\|Q \|_{L^\infty}^2\sum_{q\in\ZZ}2^{2qs}\|\Dd_q \nabla Q\|_{L^2}
	 \sum_{q''\geq q-10} 2^{q-q''} \|\Dd_{q''}\nabla Q \|_{L^2}\\
	&\lesssim
	\|Q \|_{L^\infty}^2\sum_{q\in\ZZ}2^{qs}\|\Dd_q \nabla Q\|_{L^2} 
	\sum_{q''\geq q-10} 2^{(q-q'')(s+1)} 2^{q''s} \|\Dd_{q''}\nabla Q \|_{L^2},
\end{align*}
so that, by convolution and Young inequality
\begin{equation*}
	\sum_{q\in\ZZ} \|Q\|_{L^\infty}
	\sum_{[q-q'|\leq 5}II_{q,q'} \lesssim \|Q \|_{L^\infty}^2\| \nabla Q\|_{\Hh^s}.
\end{equation*}
Summarizing the previous inequalities, we get
\begin{equation*}
	\sum_{q\in\ZZ}2^{2qs}\Aa_q\lesssim \| Q \|_{L^\infty}^2\| \nabla Q\|_{\Hh^s}^2.
\end{equation*}
In order to examine $\Bb_q$ it is sufficient to observe that
\begin{align*}
	\sum_{q\in\ZZ}2^{2qs}&\Bb_q \lesssim \sum_{q\in\ZZ}2^{2qs} \sum_{|q-q'|\leq 5}\| \Sd_{q'-1}(\trc\{Q^2\}\Id) \Dd_{q'} Q \|_{L^2}\|\Dd_q \Delta Q\|_{L^2}\\
	&\lesssim \|Q^2 \|_{L^\infty}\sum_{q\in\ZZ}2^{2qs} \sum_{|q-q'|\leq 5}\|\Dd_{q'}\nabla Q \|_{L^2}\|\Dd_q \nabla Q\|_{L^2}
	\lesssim \|Q \|_{L^\infty}^2\|\nabla Q\|_{\Hh^s}^2.
\end{align*}
It remains indeed $\Cc_q$, which is straightforward, indeed
\begin{align*}
	\sum_{q\in\ZZ} 2^{2qs} \Cc_q 
	&\lesssim 
	\sum_{q\in\ZZ} 
	2^{2qs}
	\sum_{\substack{q'\geq q-5\\ |l|\leq 1}}
	\|  \Dd_{q'} Q \Dd_{q+l}(Q^2) 	\|_{L^2}
	\|	\Dd_{q}	\Delta Q			\|_{L^2}\\
	&\lesssim
	\|Q\|_{L^\infty}^2
	\sum_{q\in\ZZ} 2^{qs} 
	\|	\Dd_{q}	\nabla Q			\|_{L^2}
	\sum_{q'\geq q-5} 2^{(q-q')(s+1)}
	\|	\Dd_{q'}	\nabla Q			\|_{L^2},
\end{align*}
thus, by convolution and the Young inequality,
\begin{equation*}
	\sum_{q\in\ZZ} 2^{2qs} \Cc_q \lesssim \|Q\|_{L^\infty}^2 \|\nabla Q\|_{\Hh^s}^2.
\end{equation*}
Summarizing, we finally get 
\begin{equation*}
	c\langle  Q\trc \{Q^2\} ,\, \Delta Q\rangle_{\Hh^s}\lesssim \|Q\|_{L^\infty}^2 \|\nabla Q\|_{\Hh^s}^2
\end{equation*}
and recalling  \eqref{appx_first_estimate}-\eqref{appx_second_estimate}, we finally obtain
\begin{equation*}
	\langle \Pp(Q), \Delta Q \rangle_{\Hh^s}	
	\lesssim 
	(	1	+ \|	Q 	\|_{L^\infty}	+	\|	Q	\|_{L^\infty}^2	)	\|	\nabla	Q	\|_{	\Hh^s	}^2
	\lesssim
	(	1	+ \|	Q 	\|_{H^2}	+	\|	Q	\|_{H^2}^2	)	\|	\nabla	Q	\|_{	\Hh^s	}^2,
\end{equation*}
where the last inequality is due to the embedding $H^2(\RR^2)\hookrightarrow L^{\infty}(\RR^2)$. Hence, inequality \eqref{estimate_P(Q)_DeltaQ} is proven.

\pagestyle{empty}
\bibliographystyle{amsplain}
\providecommand{\bysame}{\leavevmode\hbox to3em{\hrulefill}\thinspace}
\providecommand{\MR}{\relax\ifhmode\unskip\space\fi MR }
\providecommand{\MRhref}[2]{%
  \href{http://www.ams.org/mathscinet-getitem?mr=#1}{#2}
}
\providecommand{\href}[2]{#2}

\end{document}